\newtheorem{theorem}{Theorem}[section]
\newtheorem{lemma}[theorem]{Lemma}
\newtheorem{proposition}[theorem]{Proposition}
\theoremstyle{definition}
\theoremstyle{remark}
\theoremstyle{example}
\theoremstyle{note}
\numberwithin{equation}{section}
\DeclareMathOperator{\Hom}{Hom}
\DeclareMathOperator{\Ind}{Ind}
\DeclareMathOperator{\ch}{char}
\DeclareMathOperator{\GL}{GL}
\DeclareMathOperator{\SL}{SL}
\DeclareMathOperator{\Ker}{Ker}
\DeclareMathOperator{\Gal}{Gal}
\DeclareMathOperator{\Image}{Im}
\DeclareMathOperator{\Span}{Span}
\DeclareMathOperator{\Sp}{Sp}
\DeclareMathOperator{\SO}{SO}
\DeclareMathOperator{\Id}{I}
\DeclareMathOperator{\Ortho}{O}
\begin{document}
\title{Self-dual representations with vectors fixed under an Iwahori subgroup}
\author{Kumar Balasubramanian}
\address{Department of Mathematics,
Indian Institute of Science Education and Research Bhopal,
Bhopal, Madhya Pradesh, India.}
\email{bkumar@iiserb.ac.in}
\keywords{Self-dual representations, Iwahori subgroups, Signs}

\subjclass[2010]{22E50 (primary), 20G05 (secondary)}
\maketitle

\section*{Abstract}

Let $G$ be the group of $F$-points of a split connected
reductive $F$-group over a non-Archimedean local
field $F$ of characteristic 0. Let $\pi$ be an irreducible smooth self-dual representation of $G$. The
space $W$ of $\pi$ carries a non-degenerate $G$-invariant bilinear
form $(\,,\,)$ which is unique up to scaling. The form is easily seen to be symmetric or skew-symmetric and we set $\varepsilon({\pi})=\pm 1$ accordingly.
In this article, we show that $\varepsilon{(\pi)}=1$ when $\pi$ is a generic representation of $G$ with non-zero vectors fixed under an Iwahori subgroup $I$.\\

\section*{Introduction}

Let $G$ be a group and $(\pi,V)$ be an irreducible representation of $G$ such that $\pi \simeq \pi^{\vee}$ ($\pi^{\vee}$ denotes the dual or contragredient of $\pi$). This isomorphism gives rise to a non-degenerate $G$-invariant bilinear form on $V$ which is unique up to scalars, and consequently is either symmetric or skew-symmetric. Accordingly, we set
\begin{equation*}
\varepsilon{(\pi)} =
\begin{cases}
\;\;\;1 & \text{if the form is symmetric}\\
-1 & \text{if the form is skew-symmetric}
\end{cases}
\end{equation*}
and call it the sign of $\pi$. In this article, we study this sign for a certain class of representations of a reductive $p$-adic group $G$. To be more precise, we study the sign for representations with non-zero vectors fixed under an Iwahori subgroup in $G$. The structure of representations with Iwahori fixed vectors is well understood and this is one of the principal reasons we restrict our analysis to this particular class of representations. \\ 

\section*{Overview of the problem}

Let $G$ be a group and $(\pi,V)$ be an irreducible self-dual complex representation of $G$. We say that the representation $\pi$ is realizable over the real numbers if there exists a $G$-invariant real subspace $W$ of $V$ such that $V\simeq \mathbb{C}\otimes_{\mathbb{R}} W$ as representations of $G$. A classical problem in representation theory was to determine when such a $W$ exists. For a finite group $G$, this problem was settled by Frobenius and Schur more than a century ago. They showed that $\pi$ is realizable over the real numbers precisely when $\varepsilon{(\pi)}=1$. They also gave a formula to compute $\varepsilon{(\pi)}$ in terms of the character $\chi_{\pi}$ of the representation $\pi$. They showed that
\begin{displaymath}
\varepsilon{(\pi)}= \frac{1}{|G|}\sum_{g\in G}\chi_{\pi}(g^{2}).
\end{displaymath}

The sign $\varepsilon{(\pi)}$ has been fairly extensively studied for connected compact Lie groups and finite groups of Lie type. In this setting, the sign is sometimes referred to as the Frobenius-Schur indicator of $\pi$. There is a lot of literature available on computing these signs for such groups. For connected compact Lie groups the sign can be computed using the dominant weight attached to the representation $\pi$ (see \cite{BroTom} pg. 261-264). In \cite{Gow[2]}, Gow showed that for $q$ a power of an odd prime and $F_{q}$ the finite field with $q$ elements, irreducible self-dual complex representations of $\SO(n,F_{q})$ are always realizable over $\mathbb{R}$. He also showed that the same is true for any non-faithful representation of $\Sp(n,F_{q})$. The proofs involve a detailed analysis of the conjugacy classes of these groups and are computationally quite complicated. In \cite{Pra[1]}, Prasad introduced an elegant idea to compute the sign for a certain class of representations of finite groups of Lie type. These representations are called generic. He used this idea to determine the sign for many finite groups of Lie type, avoiding tedious conjugacy class computations. In a subsequent paper \cite{Pra[2]}, he extended this idea to representations of a reductive $p$-adic group $G$ and computed the sign for generic representations of certain classical groups in some cases. In \cite{Vin[1]}, Vinroot used Prasad's idea along with other techniques to compute the sign for an irreducible self-dual representation of $\GL(n,F)$ where $F$ is a $p$-adic field.\\

In this article, we determine the sign $\varepsilon{(\pi)}$ when $\pi$ is a certain type of representation of an arbitrary connected reductive $p$-adic group $G$ which is split over the underlying $p$-adic field $F$. Suppose $K$ is the maximal compact open subgroup of $G$ and $\pi$ has non-zero vectors fixed under $K$. In this situation, it is easy to see that $\varepsilon{(\pi)}$ is always 1. A natural question is what is $\varepsilon{(\pi)}$ if $\pi$ has non-zero Iwahori fixed vectors. There is enough evidence that the sign is one in the Iwahori fixed case. We have not been able to prove the result in complete generality. However, we do address a particular case of this problem. To be more precise, we prove the following theorem.\\

\begin{theorem}[Main Theorem] Let $(\pi,W)$ be an irreducible smooth self-dual representation of $G$ with non-zero vectors fixed under an Iwahori subgroup in $G$. Suppose that $\pi$ is also generic. Then $\varepsilon{(\pi)}=1$.\\
\end{theorem}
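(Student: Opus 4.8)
The plan is to descend the question to the Iwahori--Hecke algebra, where $\pi$ becomes a finite-dimensional module and the sign is controlled by the combinatorics of the finite Hecke algebra; this is in the spirit of Prasad's reduction of the sign of a generic representation to a finite computation. Since all Iwahori subgroups of $G$ are conjugate, after conjugating $I$ we may fix a Borel subgroup $B=TU$ and a hyperspecial point so that $I$ is the standard Iwahori subgroup; let $K\supset I$ be the attached hyperspecial maximal compact subgroup, and write $\mathcal{H}=\mathcal{H}(G,I)$ for the Iwahori--Hecke algebra and $\mathcal{H}_0=\mathcal{H}(K,I)\subset\mathcal{H}$ for the finite Hecke algebra of the Weyl group $W_0$ with parameter $q$. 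Since $\pi$ has non-zero Iwahori-fixed vectors, $M:=W^I$ is a non-zero finite-dimensional $\mathcal{H}$-module; by the standard averaging argument over $I$, a vector of $M$ orthogonal to all of $M$ under $(\,,\,)$ is orthogonal to all of $W$, hence is $0$. Thus the restriction $B_0$ of $(\,,\,)$ to $M\times M$ is non-degenerate, still satisfies $B_0(w,v)=\varepsilon(\pi)\,B_0(v,w)$, and is contravariant for the canonical anti-involution $\iota$ of $\mathcal{H}$ sending $\mathbf{1}_{IgI}$ to $\mathbf{1}_{Ig^{-1}I}$, i.e. $B_0(hv,w)=B_0(v,\iota(h)w)$; this is the usual compatibility of the functor $V\mapsto V^I$ with passage to the contragredient. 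So $\varepsilon(\pi)$ is exactly the symmetry sign of the non-degenerate $\iota$-contravariant form $B_0$ on $M$.

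Next I would decompose $M$ under $\mathcal{H}_0$, which is semisimple because $q$ is a prime power $>1$: write $M=\bigoplus_\rho M[\rho]$ for its isotypic decomposition over the irreducible $\mathcal{H}_0$-modules. Since $B_0$ identifies the $\mathcal{H}_0$-module $M$ with its $\iota$-twisted dual, and $\iota$ fixes each generator $T_s=\mathbf{1}_{IsI}$, one checks that $M[\rho]$ is $B_0$-orthogonal to $M[\sigma]$ unless $\sigma$ is the $\iota$-contragredient of $\rho$. The sign character $\mathrm{sgn}$ of $\mathcal{H}_0$ (sending every $T_s$ to $-1$) is visibly equal to its own $\iota$-contragredient, so $B_0$ restricts to a non-degenerate bilinear form on the isotypic component $M[\mathrm{sgn}]$.

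To finish I would invoke genericity. Since $\pi$ is Iwahori-spherical and generic, it is generic with respect to the unramified Whittaker datum (the unique generic constituent of an unramified principal series is $\psi$-generic for every non-degenerate $\psi$), and the Hecke-algebraic description of Whittaker models for Iwahori-spherical representations gives $\dim\Hom_{\mathcal{H}_0}(\mathrm{sgn},M)=\dim\Hom_U(\pi,\psi)$, which equals $1$ by uniqueness of Whittaker models for quasi-split groups. Hence $M[\mathrm{sgn}]$ is one-dimensional. A non-degenerate bilinear form on a line is automatically symmetric, so the restriction of $B_0$ to $M[\mathrm{sgn}]$ is symmetric; taking $v=w\in M[\mathrm{sgn}]$ non-zero in $B_0(w,v)=\varepsilon(\pi)B_0(v,w)$ forces $\varepsilon(\pi)=1$.

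The steps involving the Iwahori--Hecke algebra and its finite subalgebra are formal. I expect the one genuine input to be the identity $\dim\Hom_{\mathcal{H}_0}(\mathrm{sgn},M)=\dim\Hom_U(\pi,\psi)$ --- equivalently, the computation of the Iwahori component of the Gelfand--Graev representation as studied by Reeder and by Chan--Savin --- since this is precisely what pins the relevant multiplicity to $1$ and makes the one-dimensionality argument, and hence the conclusion $\varepsilon(\pi)=1$, go through. A secondary routine point is to verify that a generic Iwahori-spherical $\pi$ is generic for the unramified character of $U$, which reduces to Rodier's theorem on the uniqueness and genericity of the generic constituent of an unramified principal series.
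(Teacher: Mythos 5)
Your reduction to the Iwahori--Hecke algebra is an attractive and genuinely different route from the paper's, which instead invokes Prasad's criterion via a torus element $s\in T_{\circ}$ with $\alpha(s)=-1$ for all simple roots, together with Rodier's compact approximation of the Whittaker model, and then handles the disconnected-center case by embedding $G$ in a $z$-extension $\tilde G$. The formal Hecke-algebraic steps in your argument are fine: non-degeneracy of the restriction of $(\,,\,)$ to $M=W^I$, contravariance with respect to the standard anti-involution $\iota$, the orthogonality of isotypic components for non-$\iota$-dual irreducibles of $\mathcal{H}_0$, and the self-$\iota$-duality of $\mathrm{sgn}$. If $M[\mathrm{sgn}]$ is one-dimensional the conclusion follows immediately, and what you get is essentially a Hecke-algebraic incarnation of Prasad's lemma that short-circuits the need for the element $s$.

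The gap is in the passage from ``$\pi$ is generic'' to ``$M[\mathrm{sgn}]\ne 0$.'' The identity $\dim\Hom_{\mathcal{H}_0}(\mathrm{sgn},M)=\dim\Hom_U(\pi,\psi_0)$ (Barbasch--Moy, Reeder, Chan--Savin) holds for a \emph{fixed} non-degenerate $\psi_0$ of depth zero attached to the Iwahori, and its right-hand side does depend on the choice of Whittaker datum when $Z(G)$ is disconnected, since $T$ need not act transitively on non-degenerate characters of $U$. Your parenthetical justification---that the generic constituent of an unramified principal series is $\psi$-generic for every $\psi$---is not a consequence of Rodier's theorems and is in fact what fails. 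For example, in $\SL_{2}(F)$ with $\mu$ the non-trivial unramified quadratic character, $\Ind_B^G\mu$ splits into two Iwahori-spherical constituents, each generic for exactly half of the $T$-orbits of non-degenerate characters; one of them carries $\mathrm{triv}$, not $\mathrm{sgn}$, on its $I$-fixed line. So ``generic'' does not imply ``$\psi_0$-generic,'' and your argument as written only establishes the theorem for representations that are generic with respect to the depth-zero Whittaker datum aligned with $I$ (equivalently, whenever $M[\mathrm{sgn}]\ne 0$)---which does cover the connected-center case, since there all non-degenerate $\psi$ are $T$-conjugate. For disconnected center you would need either to prove directly that $M[\rho]$ is one-dimensional for the $\iota$-self-dual $\rho$ carried by the generic constituent in question, or to pass to a $z$-extension as the paper does, so that the $T$-orbit issue disappears.
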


\section{Self-dual representations and signs}\label{sign of a rep}

In this section, we introduce and briefly discuss the notion of signs associated to self-dual representations.

\subsection{Sign of $\pi$} Let $F$ be a non-Archimedean local field and $G$ be the group of $F$-points of a connected reductive algebraic group. Let $(\pi,W)$ be a smooth irreducible representation of $G$. We write $(\pi^{\vee}, W^{\vee})$ for the smooth dual or contragredient of $(\pi,W)$ and $\langle \, , \, \rangle$ for the canonical non-degenerate $G$-invariant pairing on $W \times W^{\vee}$ (given by evaluation). Let $s: (\pi,W)\rightarrow (\pi^{\vee}, W^{\vee})$ be an isomorphism. The map $s$ can be used to define a bilinear form on $W$ as follows
\begin{displaymath}
(w_{1}, w_{2})= \langle w_{1}, s(w_{2})\rangle, \quad \forall w_{1}, w_{2}\in W.
\end{displaymath}
It is easy to see that $(\,,\,)$ is a non-degenerate $G$-invariant form on $W$, i.e., it satisfies,
\begin{equation*}
(\pi(g)w_{1}, \pi(g)w_{2})= (w_{1}, w_{2}), \quad \forall w_{1}, w_{2}\in W.
\end{equation*}

Let $(\, , \,)_{\ast}$ be a new bilinear form on $W$ defined by
\begin{equation*}
(w_{1} , w_{2})_{\ast} = (w_{2}, w_{1})
\end{equation*}

Clearly, this form is again non-degenerate and $G$-invariant. It follows from Schur's Lemma that
\begin{equation*}\label{dual}
(w_{1} , w_{2})_{\ast} = c(w_{1}, w_{2})
\end{equation*}
for some non-zero scalar $c$. A simple computation shows that $c\in \{\pm 1\}$. Indeed,
\begin{displaymath}
(w_{1},w_{2}) = (w_{2},w_{1})_{\ast} = c(w_{2},w_{1}) = c(w_{1},w_{2})_{\ast} = c^{2}(w_{1},w_{2}).
\end{displaymath}
We set $c=\varepsilon{(\pi)}$. It clearly depends only on the equivalence class of $\pi$. In sum, the form $(\,,\,)$ is symmetric or skew-symmetric and the sign $\varepsilon{(\pi)}$ records its type.

\section{Representations of some classical groups}\label{reps of classical groups}
We use a theorem of Waldspurger to show that many representations of classical groups are self-dual. Throughout this section, we let $F$ be a non-Archimedean local field of characteristic $\neq 2$ and $W$ be a finite dimensional vector space over $F$. We write $\mathfrak{O}$ for the ring of integers in $F$, $\mathfrak{p}$ for the unique maximal ideal of $\mathfrak{O}$ and $k$ for the residue field. We let $\langle\, , \, \rangle$ to be a non-degenerate symmetric or skew-symmetric form on $W$. We take \begin{displaymath} G=\{g\in \GL(W) \mid \langle gw,gw' \rangle=\langle w, w' \rangle\} \end{displaymath} For $x\in \GL(W)$ such that $xGx^{-1}=G$ and $(\pi,V)$ a representation of $G$, we let $\pi^{x}$ denote the representation of $G$ defined by conjugation (i.e., $\pi^{x}(g)=\pi(xgx^{-1})$.\\

We recall the statement of Waldspurger's theorem and refer the reader to Chapter 4.II.1 in \cite{MoeWalVig} for a proof.

\begin{theorem}[Waldspurger]\label{waldspurger} Let $\pi$ be an irreducible admissible representation of $G$ and $\pi^{\vee}$ be the smooth-dual or contragredient of $\pi$. Let $x\in \GL(W)$ be such that $\displaystyle \langle xw,xw'\rangle = \langle w',w\rangle,\, \forall\, w,w'\in W$. Then $\pi^{x}\simeq \pi^{\vee}$.
\end{theorem}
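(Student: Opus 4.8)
The plan is to reduce the asserted isomorphism to a statement about conjugacy classes in $G$ and then appeal to Harish-Chandra's character theory. First I would record the elementary facts about $x$. Write $\epsilon=+1$ if $\langle\,,\,\rangle$ is symmetric and $\epsilon=-1$ if it is skew-symmetric, so that the hypothesis on $x$ reads $\langle xw,xw'\rangle=\langle w',w\rangle=\epsilon\langle w,w'\rangle$. A one-line computation each shows that $x^{-1}$ satisfies the same identity, that $x^{2}\in G$, and that $xGx^{-1}=G$; hence $\pi^{x}$ is a genuine representation of $G$ and conjugation by $x$ is an automorphism of $G$ whose class modulo inner automorphisms has order dividing $2$. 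When $\langle\,,\,\rangle$ is symmetric one moreover has $x\in G$ outright, whereas in the symplectic case $x$ is a similitude of multiplier $-1$.

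Next I would set up the character comparison. Since $\pi$ is admissible it has a distribution character $\Theta_{\pi}$, which by Harish-Chandra is represented by a locally integrable function, locally constant on the dense open set $G^{\mathrm{reg}}$ of regular semisimple elements and invariant under $G$-conjugacy; moreover two irreducible admissible representations with the same character are isomorphic. Because $\pi^{x}(g)=\pi(xgx^{-1})$ literally, the function representing $\Theta_{\pi^{x}}$ is $g\mapsto\Theta_{\pi}(xgx^{-1})$, and that of $\Theta_{\pi^{\vee}}$ is $g\mapsto\Theta_{\pi}(g^{-1})$. Since a locally integrable class function is determined by its restriction to $G^{\mathrm{reg}}$, it suffices to show $\Theta_{\pi}(xgx^{-1})=\Theta_{\pi}(g^{-1})$ for every regular semisimple $g$; and as $\Theta_{\pi}$ is conjugation-invariant, this follows from the statement below.

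\medskip
\noindent\textbf{Key Lemma.} For every regular semisimple $g\in G$ the elements $xgx^{-1}$ and $g^{-1}$ are conjugate in $G$.
\medskip

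Establishing the Key Lemma is the heart of the matter and, I expect, the principal obstacle. The point of departure is that any $g\in G$ has reciprocal-closed spectrum: over $\overline F$ its eigenvalues occur in pairs $\{\lambda,\lambda^{-1}\}$ (together with possible values $\pm1$), so $g$, $g^{-1}$, and $xgx^{-1}$ already lie in a single $\GL(W)$-conjugacy class; the work is to upgrade this to a $G$-conjugacy for regular semisimple $g$. For such $g$ one decomposes $W$ into mutually orthogonal, $g$-stable, non-degenerate subspaces indexed by the irreducible factors of the (separable, palindromic) characteristic polynomial; on each block there is an explicit isometry or similitude interchanging the $\lambda$- and $\lambda^{-1}$-eigenspaces and carrying $g$ to $g^{-1}$, and here it is essential that $G$ is the \emph{full} orthogonal group, so that reflections are available to correct discriminants — this is exactly why the statement would fail for $\SO$. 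Assembling the blocks yields $x_{g}\in\GL(W)$ with $x_{g}gx_{g}^{-1}=g^{-1}$ and $\langle x_{g}w,x_{g}w'\rangle=\epsilon\langle w,w'\rangle$; then $y:=x^{-1}x_{g}$ lies in $G$ by the preliminaries, and the identity $xgx^{-1}=(xy^{-1}x^{-1})\,g^{-1}\,(xy^{-1}x^{-1})^{-1}$, whose conjugator $xy^{-1}x^{-1}$ lies in $xGx^{-1}=G$, is the Key Lemma. Making this block decomposition precise over a $p$-adic field — essentially Wall's classification of conjugacy classes in classical groups, together with the fact that elements of $\Sp(W)$ and $\Ortho(W)$ are products of two involutions — is the one genuinely technical ingredient.

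As an alternative to the character endgame, one may observe that the Key Lemma says precisely that the anti-automorphism $\sigma(g)=xg^{-1}x^{-1}$ of $G$ preserves every regular semisimple conjugacy class; by Bernstein's localization principle this forces every $\sigma$-invariant distribution on $G$ to be invariant under $G$-conjugation, which through the Gelfand--Kazhdan criterion gives $\Hom_{G}(\pi,(\pi^{x})^{\vee})\neq 0$ and hence, by irreducibility and Schur's lemma, $\pi^{x}\simeq\pi^{\vee}$. Both routes isolate the same linear-algebra core.
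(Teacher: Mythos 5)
The paper does not actually prove this statement: it is quoted from Moeglin--Vigneras--Waldspurger, and the ``proof'' given is simply the citation to Chapter 4.II.1 of that book. Your sketch is essentially the argument of the cited source: the heart of the MVW proof is exactly your Key Lemma (proved there for all $g\in G$, not only regular semisimple $g$, via the classification of conjugacy classes in classical groups), combined with a character/invariant-distribution comparison of the kind you describe; in characteristic zero the regular-semisimple version together with Harish-Chandra's regularity theorem (or, for your second route, the density of regular orbital integrals behind the Gelfand--Kazhdan argument) does suffice. One correction to a supporting claim inside your Key Lemma: it is false that every element of $\Sp(W)$ is a product of two involutions lying in $\Sp(W)$ --- already in $\Sp(2,F)=\SL(2,F)$ the only involutions are $\pm 1$, and $g$ need not be conjugate to $g^{-1}$ inside $\SL(2,F)$ --- and indeed the whole point of the similitude element $x$ is that in the symplectic case the conjugation carrying $g$ to $g^{-1}$ must in general be an anti-isometry (similitude of factor $-1$), which is exactly what your block-by-block construction of $x_{g}$ produces; in the orthogonal case the two-involutions statement (Wonenburger) is correct. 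With that caveat, your outline reproduces the standard proof rather than offering a genuinely different one.
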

\begin{proof} See chapter 4.II.1 in \cite{MoeWalVig}.
\end{proof}

\subsection{Orthogonal groups} Suppose the form $\langle\,,\,\rangle$ is symmetric so that $G$ is the orthgonal group $\Ortho(W)$. Let $\pi$ be any irreducible admissible representation of $G$. Then $x=1\in G$ satisfies $\langle xw,xw'\rangle = \langle w',w\rangle, \forall \,w,w'\in W$. Now using Waldspurger's Theorem, it follows that $\pi\simeq \pi^{\vee}$. So in the case of orthogonal groups every irreducible representation $\pi$ is self-dual.
\subsection{Special orthogonal groups} Suppose the dimension of $W$ is odd. Take $G=\SO(W)=\Ortho(W)\,\cap\, \SL(W)$ and $\pi$ to be an irreducible admissible representation of $G$. Since $\Ortho(W)\simeq \SO(W)\times \{\pm 1_{W}\}$, it follows that there exists an irreducible representation $\tilde{\pi}$ of $\Ortho(W)$ such that $\tilde{\pi}\simeq \pi \otimes \chi$ (where $\chi$ is a character of $\{\pm 1_{W}\}$). Since $\chi=\chi^{-1}$ and $\tilde{\pi} \simeq \tilde{\pi}^{\vee}$ it follows that $\pi\simeq \pi^{\vee}$.
\subsection{Symplectic groups} Suppose that $G$ is the symplectic group $Sp(W)$ or $Sp(n,F)$. We show that a certain class of representations of $G$ is always self-dual. To be more precise, we prove,

\begin{theorem}\label{symplectic case} Let $(\pi,V)$ be an irreducible admissible representation of $G$ with non-zero vectors fixed under an Iwahori subgroup $I$ in $G$. Then $\pi\simeq \pi^{\vee}$.
\end{theorem}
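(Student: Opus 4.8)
The plan is to deduce this from Waldspurger's theorem (Theorem~\ref{waldspurger}) by producing a convenient twisting element. Since $\langle\,,\,\rangle$ is skew-symmetric, the condition $\langle xw, xw'\rangle = \langle w', w\rangle$ on an element $x\in\GL(W)$ says precisely that $x$ is a symplectic similitude of multiplier $-1$. I would fix a symplectic basis $e_1,\dots,e_n,f_1,\dots,f_n$ of $W$, realize $G$ as $\Sp_{2n}(F)$ with $T$ the diagonal maximal torus, $B$ the upper triangular Borel and $I$ the standard Iwahori, and take $x$ to be the diagonal element acting by $-1$ on $\Span(e_1,\dots,e_n)$ and by $+1$ on $\Span(f_1,\dots,f_n)$. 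Then $\langle xw,xw'\rangle = -\langle w,w'\rangle = \langle w',w\rangle$, so Theorem~\ref{waldspurger} gives $\pi^{\vee}\simeq\pi^{x}$, and it remains only to prove $\pi^{x}\simeq\pi$.

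The point of this particular $x$ is that it lies in $\GL_{2n}(\mathfrak{O})$, centralizes $T$ and normalizes $B$. Hence conjugation by $x$ preserves $G(\mathfrak{O})$ and, reducing modulo $\mathfrak{p}$, preserves $B(k)$, so it preserves the standard Iwahori $I$; in particular $\pi^{x}$ again has nonzero $I$-fixed vectors and $(\pi^{x})^{I} = \pi^{I}$. Moreover, since $x$ centralizes $T$, conjugation by $x$ acts trivially on the (extended) affine Weyl group $\widetilde{W} = N_{G}(T)/T(\mathfrak{O})$ — it is trivial on $N_{G}(T)/T$ and on $T/T(\mathfrak{O})$ — so for each standard representative $\dot{s}$ of an affine simple reflection one gets $x\dot{s}x^{-1}\in\dot{s}\,T(\mathfrak{O})\subseteq I\dot{s}I$, whence $I(x\dot{s}x^{-1})I = I\dot{s}I$. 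Because conjugation by $x$ preserves the Haar measure of $G$, this gives $\pi^{x}(\mathbf{1}_{I\dot{s}I}) = \pi(\mathbf{1}_{I\dot{s}I})$ for every such $\dot{s}$. As these functions generate the Iwahori--Hecke algebra $\mathcal{H}(G,I)$ as an algebra, the two $\mathcal{H}(G,I)$-module structures on $\pi^{I}=(\pi^{x})^{I}$ — the one coming from $\pi$ and the one coming from $\pi^{x}$ — coincide. By the theorem of Borel and Casselman, $\sigma\mapsto\sigma^{I}$ induces a bijection between irreducible smooth representations of $G$ with nonzero $I$-fixed vectors and simple $\mathcal{H}(G,I)$-modules; therefore $\pi^{x}\simeq\pi$, and hence $\pi\simeq\pi^{\vee}$.

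The step I expect to require the most care is the claim that conjugation by $x$ fixes each standard double coset $I\dot{s}I$. Although this conjugation is an \emph{outer} automorphism of $G$ when $-1\notin(F^{\times})^{2}$, the integrality and diagonal shape of $x$ force it to fix the hyperspecial vertex and the base alcove of the Bruhat--Tits building and to act trivially on $\widetilde{W}$; one must verify this with some attention for the affine simple reflection $\dot{s}_{0}$, whose representative involves the highest root of $C_{n}$ and a power of a uniformizer, rather than only for the finite simple reflections. Granting that, everything else is formal given Waldspurger's theorem and the Borel--Casselman equivalence.
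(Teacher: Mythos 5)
Your overall strategy is the same as the paper's: apply Waldspurger's theorem with $x=\bigl[\begin{smallmatrix}-\Id & 0 \\ 0 & \Id \end{smallmatrix}\bigr]$, observe that $x$ normalizes $I$ so $\pi^{I}=(\pi^{x})^{I}$ as vector spaces, and then show the two $\mathcal{H}(G,I)$-module structures coincide by checking that conjugation by $x$ preserves each Iwahori double coset $IwI$. You check this on a set of algebra generators (the affine simple reflections); the paper instead works with the redundant generating set $\{w_{i},\,w_{n},\,w_{\ell}\}$ for $\widetilde{W}$ and inducts on word length (Lemma~\ref{action of x}). These are equivalent ways of organizing the same verification.

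There is, however, a genuine gap in the step where you assert that conjugation by $x$ acts trivially on $\widetilde{W}=N_{G}(T)/T_{\circ}$ \emph{because} it is trivial on $N_{G}(T)/T$ and on $T/T_{\circ}$. An automorphism of a group that is trivial on a normal subgroup and on the corresponding quotient need not be trivial on the whole group (e.g.\ the inversion automorphism of $\mathbb{Z}/4\mathbb{Z}$ is trivial on the subgroup of order $2$ and on the quotient), so this is a non sequitur as stated. What you actually have from ``$x$ centralizes $T$'' is only that $x\dot{s}x^{-1}\in\dot{s}T$, not $\dot{s}T_{\circ}$. The correct conclusion does hold here, but it must be verified on generators, and this is exactly what you flag at the end as ``the step requiring the most care'' without carrying it out. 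You should either perform that check (note in particular that $xw_{n}x^{-1}=w_{n}\cdot\mathrm{diag}(1,\ldots,1,-1,-1,1,\ldots,1)$, which lies in $w_{n}T_{\circ}$ but is not literally $w_{n}$, and the affine reflection $\dot{s}_{0}$ requires a similar computation involving $\varpi$), or, as the paper does, pass to the larger generating set $\{w_{i},w_{n},w_{\ell}\}$ so that the only generator needing a nontrivial computation is $w_{n}$, and then close the argument by induction on word length as in Lemma~\ref{action of x}.
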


Consider $x= \bigl[\begin{smallmatrix}-\Id & 0 \\\,\,\, 0 & \Id \end{smallmatrix}\bigr]\in \GL(W)$ (where $\Id$ is the $n\times n$ identity matrix). It is easy to see that $\langle xw, xw' \rangle = \langle w', w \rangle$. By Theorem~\ref{waldspurger}, $\pi^{x}\simeq \pi^{\vee}$. To prove $\pi\simeq \pi^{\vee}$, it suffices to show that $\pi \simeq \pi^{x}$. Observe that $xIx^{-1}=I$ and $\pi^{I}=(\pi^{x})^{I}$. Since $\pi^{I}=(\pi^{x})^{I}\neq 0$, they can be realized as simple modules over $\mathcal{H}(G,I)$ (where $\mathcal{H}(G,I)$ is the Iwahori-Hecke algebra). Let $f\bullet v$ and $f\star v$ denote the action of $\mathcal{H}(G,I)$ on $\pi^{I}$ and $(\pi^{x})^{I}$ respectively. It will follow that $\pi \simeq \pi^{x}$ if $\pi^{I}$ and $(\pi^{x})^{I}$ are equivalent as $\mathcal{H}(G,I)$-modules. We establish this equivalence, by showing that the map $\phi=1_{V}$ (identity map on $V$) defines an intertwining map between $\pi^{I}$ and $(\pi^{x})^{I}$. \\

Before we continue, we fix a collection of coset representatives for the affine Weyl group $\widetilde{\mathcal{W}}$ and record two lemmas we need.\\

Let $B_{i,i+1}, i=1,2,\ldots, n-1$, be the $n\times n$ matrix
\begin{align*}
B_{i,i+1}& = \begin{bmatrix}
1 \\
  & \ddots \\
  &       & 0 & 1 \\
  &       & 1 & 0 & \\
  &       &   &   & \ddots &\\
  &       &   &   &        & 1\\
\end{bmatrix}
\left\}
\begin{array}{c}
i \\
i+1%
\end{array}%
\right.
\end{align*}
\vspace{0.5 cm}\\
and $w_{n}$ and $w_{\ell},\, \ell\in \mathbb{Z}^{n}$, be the following $2n \times 2n$ matrices
\vspace{0.5 cm}
\begin{align*}
w_{n} &= \begin{bmatrix}
 1 & & \\
 & \ddots & \\
 & & 0 & 1\\
 & & -1  & 0\\
 & &  &  & \ddots \\
 & &  &  &  & 1\\
\end{bmatrix}
\left\}
\begin{array}{c}
n \\
n+1%
\end{array}%
\right., &\\\\\\
w_{\ell} &= \begin{bmatrix}
\varpi^{l_{1}} & & \\
 & \ddots & \\
 & & \varpi^{l_{n}}\\
 & &  & \varpi^{-l_{n}}\\
 & &  &  & \ddots \\
 & &  &  &  & \varpi^{-l_{1}}\\
\end{bmatrix}, l_{i}\in \mathbb{Z}.
\end{align*}
\vspace{0.5 cm}

%
%
The group $
\bigg \langle w_{\ell}, w_{n},
w_{i}=\biggl[\begin{smallmatrix}
B_{i,i+1} & 0\\
0         & B_{i,i+1}\\
\end{smallmatrix}\biggr]
\,\, \bigg |\,\, \ell\in \mathbb{Z}^{n}, i=1,2,\ldots, n-1 \bigg \rangle
$ contains a collection of coset representatives for the affine Weyl group $\widetilde{\mathcal{W}}$.

\begin{lemma}\label{action relation} For $f\in \mathcal{H}(G,I)$, let $f^{x}\in \mathcal{H}(G,I)$ be the function $f^{x}(g)=f(x^{-1}gx)$.
The following statements are true.
\begin{enumerate}
\item [(i)] $f \star v = f^{x} \bullet v$.
\item [(ii)] For $g=i_{1}wi_{2}\in G$ (Bruhat Decomposition), $f^{x}(g)=f(xwx^{-1})$ and $f(g)=f(w)$.
\end{enumerate}
\end{lemma}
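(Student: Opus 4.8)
The plan is to unwind the two $\mathcal{H}(G,I)$-module structures and reduce everything to a single change of variables. Since $\pi^{x}$ has the same underlying space $V$ as $\pi$ and $\pi^{x}(I)=\pi(xIx^{-1})=\pi(I)$, the spaces $(\pi^{x})^{I}$ and $\pi^{I}$ literally coincide, and both actions are given by the usual convolution formula $f\cdot v=\int_{G}f(g)\,\sigma(g)v\,dg$ (Haar measure normalized so that $I$ has volume $1$). Thus $f\bullet v=\int_{G}f(g)\,\pi(g)v\,dg$ while $f\star v=\int_{G}f(g)\,\pi(xgx^{-1})v\,dg$.

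For (i) I would substitute $h=xgx^{-1}$, i.e.\ $g=x^{-1}hx$. The map $c_{x}\colon g\mapsto xgx^{-1}$ is an automorphism of $G$ (as $xGx^{-1}=G$), and it preserves $I$ (as $xIx^{-1}=I$), hence preserves Haar measure; alternatively, $c_{x}^{2}=c_{x^{2}}=\mathrm{id}$ because $x^{2}=1$, so the modulus by which $c_{x}$ scales Haar measure squares to $1$ and therefore equals $1$. Either way $dg=dh$, and so $f\star v=\int_{G}f(x^{-1}hx)\,\pi(h)v\,dh=\int_{G}f^{x}(h)\,\pi(h)v\,dh=f^{x}\bullet v$. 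It remains to observe that $f^{x}\in\mathcal{H}(G,I)$: it is compactly supported, and for $i_{1},i_{2}\in I$, using $x^{-1}Ix=I$ and the $I$-bi-invariance of $f$, $f^{x}(i_{1}hi_{2})=f\big((x^{-1}i_{1}x)(x^{-1}hx)(x^{-1}i_{2}x)\big)=f(x^{-1}hx)=f^{x}(h)$.

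For (ii), the identity $f(g)=f(w)$ for $g=i_{1}wi_{2}$ is immediate from $I$-bi-invariance. For $f^{x}(g)$ the same bi-invariance together with $x^{-1}Ix=I$ gives $f^{x}(g)=f(x^{-1}gx)=f\big((x^{-1}i_{1}x)(x^{-1}wx)(x^{-1}i_{2}x)\big)=f(x^{-1}wx)$, and since $x^{2}=1$ forces $x^{-1}=x$, this equals $f(xwx^{-1})$.

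These are essentially routine bookkeeping computations, so there is no serious obstacle; the only point that genuinely needs care is the invariance of Haar measure under conjugation by $x$, which is why I would invoke $x^{2}=1$ (equivalently $xIx^{-1}=I$) explicitly. The other thing to watch is that $x$ itself is not an element of $G$, so each conjugate $x^{-1}(\cdot)x$ must be checked to return to $G$, and to $I$, before bi-invariance is applied — which is exactly what $xGx^{-1}=G$ and $xIx^{-1}=I$ supply.
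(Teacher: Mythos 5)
Your argument is correct and follows essentially the same route as the paper: for (i) a change of variables $g\mapsto x^{-1}gx$ in the convolution integral, and for (ii) direct use of $I$-bi-invariance together with $xIx^{-1}=I$. The only difference is that you spell out details the paper leaves tacit (that $f^{x}$ is again in $\mathcal{H}(G,I)$, that Haar measure is preserved under conjugation by $x$, and that $x^{-1}=x$), which is fine but not a different method.
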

\begin{proof} Clearly $f\star v = f^{x}\bullet v$. Indeed,
\begin{align*}
f \star v &= \int_{G}f(g)\pi^{x}(g)vdg\\
&= \int_{G}f(g)\pi(xgx^{-1})vdg\\
&= \int_{G}f(x^{-1}gx)\pi(g)vdg\\
&= f^{x} \bullet v.
\end{align*}
For (ii),
\begin{align*}
f^{x}(g) &= f(xi_{1}wi_{2}x^{-1})\\
&= f(xi_{1}x^{-1}xwx^{-1}xi_{2}x^{-1})\\
&= f(ixwx^{-1}i')\\
&= f(xwx^{-1}),
\end{align*}
and
\begin{align*}
f(g)&= f(i_{1}wi_{2})\\
&= f(w).
\end{align*}
\end{proof}

\begin{lemma}\label{action of x} For $w\in \widetilde{W}$, $xwx^{-1}\in IwI$.
\end{lemma}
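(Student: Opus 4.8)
The plan is to reduce the statement to the explicit coset representatives just listed and to check directly that conjugation by $x$ multiplies each of them by a diagonal matrix of signs lying inside $I$. Since $G=\Sp(W)$ is simply connected, $\widetilde{\mathcal W}$ is the semidirect product of its translation subgroup --- represented by the diagonal matrices $w_\ell=\mathrm{diag}(\varpi^{l_1},\dots,\varpi^{l_n},\varpi^{-l_n},\dots,\varpi^{-l_1})$ --- with the finite Weyl group of type $C_n$ --- represented by the signed permutation matrices in $\langle w_1,\dots,w_{n-1},w_n\rangle$. So every $w\in\widetilde{\mathcal W}$ has a representative of the form $n_w=w_\ell\,m$ with $m$ a word in the $w_i$ and $w_n$, and it suffices to prove $x n_w x^{-1}\in I n_w I$ for such $n_w$.

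The starting observation is that $x$ is a diagonal matrix with all entries $\pm1$; it does not lie in $G$, but it is a symplectic similitude of multiplier $-1$ (that is, $\langle xv,xv'\rangle=-\langle v,v'\rangle$), so it normalizes $G$, and $xIx^{-1}=I$ as already noted. Since $x$ and $w_\ell$ are both diagonal they commute, so $x n_w x^{-1}=w_\ell\,(xmx^{-1})$ and everything comes down to $xmx^{-1}$. As $m$ is a monomial matrix with entries in $\{0,\pm1\}$, it normalizes the diagonal torus of $\GL(W)$; hence
\begin{displaymath}
c:=xmx^{-1}m^{-1}=x\cdot(mx^{-1}m^{-1})
\end{displaymath}
is a diagonal matrix all of whose entries are $\pm1$ (the entries of $x$, permuted and partly negated), so in particular $c\in\GL(W,\mathfrak{O})$ and $xmx^{-1}=c\,m$.

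The one point needing care is that $c$ in fact lies in $I$: a priori $c$ is merely some diagonal matrix of signs, and not every such matrix lies in $I$, or even in $G$. What saves the day is the similitude multiplier. Since $m\in G$ and $x^{-1}$ has multiplier $(-1)^{-1}=-1$, the conjugate $mx^{-1}m^{-1}$ has multiplier $-1$, and therefore $c=x\cdot(mx^{-1}m^{-1})$ has multiplier $(-1)(-1)=1$, i.e., $c\in G$. A diagonal element of $G$ with entries in $\mathfrak{O}^\times$ lies in $G(\mathfrak{O})$ and reduces modulo $\mathfrak{p}$ into the torus, hence into the Borel subgroup defining $I$; therefore $c\in I$. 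Putting this together and using once more that $c$ and $w_\ell$ commute,
\begin{displaymath}
x n_w x^{-1}=w_\ell\,c\,m=(w_\ell c w_\ell^{-1})(w_\ell m)=c\,n_w\in I n_w\subseteq I n_w I=IwI .
\end{displaymath}

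Equivalently, one can just verify on the generators that $xw_\ell x^{-1}=w_\ell$, $xw_ix^{-1}=w_i$ for $1\le i\le n-1$, and $xw_nx^{-1}=c_n w_n$ with $c_n=\mathrm{diag}(1,-1,\dots)$ a diagonal element of the torus, and then propagate through an arbitrary word, using that the diagonal $\mathfrak{O}^\times$-valued subgroup of $G$ is contained in $I$ and is normalized by the normalizer of the torus. The main obstacle --- really the only delicate point --- is the bookkeeping with $x$: since $x\notin G$, conjugation by it is a genuinely outer operation and $x$ cannot simply be absorbed into $I$, so one has to follow the multiplier carefully to see that the correction term $c$ obtained by commuting $x$ past the monomial part of $n_w$ does no harm.
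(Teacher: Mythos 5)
Your proof is correct but takes a genuinely different route from the paper's. The paper argues by induction on word length in the generators $w_i$, $w_\ell$, $w_n$: it checks directly that $xw_ix^{-1}=w_i$, $xw_\ell x^{-1}=w_\ell$, and $xw_nx^{-1}=tw_nt^{-1}$ with $t\in T_\circ$, and then absorbs the torus factors inductively through an arbitrary word, using that the generators normalize $T_\circ$. You instead use the semidirect-product structure of $\widetilde{W}$ to write a representative as $w_\ell m$ with $m$ a signed permutation matrix, peel off the translation part $w_\ell$ (which commutes with the diagonal $x$), and package the entire correction coming from $m$ into a single element $c=xmx^{-1}m^{-1}$, which is visibly a diagonal matrix with entries $\pm 1$. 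The structural input that makes this work is the symplectic similitude multiplier: $x$ has multiplier $-1$ while $m\in G$ has multiplier $1$, so $c$ has multiplier $1$ and hence lies in $G$, therefore in $T_\circ\subset I$. This multiplier computation replaces the paper's word-length induction and explains in one stroke why the correction term cannot leave $I$. Your closing remark --- verify on generators and propagate through a word --- is essentially the paper's route. The paper's induction has the advantage of staying purely at the level of explicit matrix identities in $\Sp$; your main argument has the advantage of isolating a conceptual invariant (the multiplier forces $c\in G$) that avoids tracking torus factors step by step.
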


\begin{proof} If $w\in \widetilde{W}$, we can write $w=u_{1}u_{2}\ldots u_{n}$ for $u_{k}\in \{w_{\circ}, w_{i}, w_{\ell}\}$. In this case we say $w$ has length $n$ and denote it as $l(w)=n$. We will use induction on the length $l(w)$ to show that $xwx^{-1}\in IwI$. Suppose that $l(w)=1$. A simple computation shows that conjugation by the element $x$ fixes the elements $w_{i}$, $w_{\ell}$ and fixes $w_{n}$ up to multiplication by elements in $T_{\circ}\subset I$, i.e., $xw_{i}x^{-1}=w_{i}$ for $i=1,\ldots, n-1$, $xw_{\ell}x^{-1}=w_{\ell}$ for $\ell \in \mathbb{Z}^{n}$ and $xw_{n}x^{-1}=tw_{n}t^{-1}$ for some $t\in T_{\circ}\subset I$. Suppose $l(w)=2$, i.e., $w=u_{1}u_{2}$ for $u_{1},u_{2}\in \{w_{0},w_{i}, w_{\ell}\}$ such that $xu_{1}x^{-1}=t_{1}u_{1}t_{1}^{-1}$ and $xu_{2}x^{-1}=t_{2}u_{2}t_{2}^{-1}$, $t_{1},t_{2}\in T_{\circ}$. In this case, we have
\begin{align*}
xwx^{-1} &= xu_{1}x^{-1}xu_{2}x^{-1}\\
&= t_{1}u_{1}t_{1}^{-1}t_{2}u_{2}t_{2}^{-1}\\
&= t_{1}u_{1}u_{2}\underbrace{u_{2}^{-1}t_{1}^{-1}u_{2}}_{\in T_{\circ}}\underbrace{u_{2}^{-1}t_{2}u_{2}}_{\in T_{\circ}}t_{2}^{-1}\\
&= t_{1}u_{1}u_{2}t_{1}'
\end{align*}
where $t,t'\in T_{\circ}$.\\

Assume that the result is true for all words $w$ such that $l(w)\leq n-1$. Suppose $w=u_{1}u_{2}\dots u_{n}$. Now
\begin{align*}
xwx^{-1} &= xu_{1}u_{2}\dots u_{n-1}x^{-1}xu_{n}x^{-1}\\
&= tu_{1}u_{2}\dots u_{n-1}t^{-1}xu_{n}x^{-1}\\
&= twt' \quad (\text{by previous case})
\end{align*}
where $t,t'\in T_{\circ}$.
\end{proof}

We now prove Theorem~\ref{symplectic case}. To prove $\phi=1_{V}$ is an intertwining map, we need to show that $f \bullet v = f \star v = f^{x} \bullet v$. By Lemma~\ref{action relation}, it suffices to show that $f(xwx^{-1})=f(w)$, for all $w\in \widetilde{\mathcal{W}}$. By Lemma~\ref{action of x}, it follows that
conjugation by the element $x$ fixes every element in $\widetilde{W}$ (up to multiplication by elements in $I$). The result now follows.\\

\section{Results used in proof of main theorem}\label{main results}

In this section, we recall the important results used in the proof of the main theorem.

\subsection{Restriction of representations to subgroups}
We recall some results about restricting an irreducible representation to a subgroup. These results hold when $G$ is a locally compact totally disconnected group and $H$ is an open normal subgroup of $G$ such that $G/H$ is finite abelian. For a more detailed account, we refer the reader to \cite{GelKna} (Lemma 2.1, 2.3).

\begin{theorem}[Gelbart-Knapp]\label{Gelbart-Knapp-1} Let $\pi$ be an irreducible admissible representation of $G$. Suppose that $G/H$ is finite abelian. Then
\begin{enumerate}
\item[(i)] $\pi|_{H}$ is a finite direct sum of irreducible admissible representations of $H$.
\item[(ii)] When the irreducible constituents of $\pi|_{H}$ are grouped according to their equivalence classes as
\end{enumerate}
\begin{equation*}
\pi|_{H}=\bigoplus_{i=1}^{M} m_{i}\pi_{i}
\end{equation*}
with the $\pi_{i}$ irreducible and inequivalent, the integers $m_{i}$ are equal.
\end{theorem}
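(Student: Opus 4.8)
The statement is the Clifford theory of Gelbart--Knapp for totally disconnected groups, and the plan is to use the finiteness of the normal quotient $G/H$ to control the way $G$ permutes the isotypic pieces of $\pi|_{H}$. Throughout, for an irreducible smooth representation $\sigma$ of $H$ and $g\in G$ I write ${}^{g}\sigma$ for the representation of $H$ on the same space with $h$ acting by $\sigma(g^{-1}hg)$; this is well defined because $H$ is normal, and $\sigma\mapsto{}^{g}\sigma$ defines an action of $G$ on the set of equivalence classes of irreducible smooth representations of $H$ which is trivial on $H$, hence factors through the finite group $G/H$.

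First I would dispose of the soft points. Since $H$ is open, $\pi|_{H}$ is smooth; since every compact open subgroup of $H$ is one of $G$, admissibility of $\pi$ forces admissibility of $\pi|_{H}$, so every subrepresentation of $\pi|_{H}$ is again admissible. The one genuinely non-formal input, which I would isolate as a lemma, is that $\pi|_{H}$ contains an irreducible subrepresentation $\sigma$: fixing a compact open $K\le H$ with $\pi^{K}\neq 0$ one reduces to the finite-dimensional $\mathcal{H}(H,K)$-module $\pi^{K}$ (equivalently, one selects a nonzero $H$-subrepresentation whose space of $K$-fixed vectors has minimal positive dimension), which is established in \cite{GelKna}.

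Granting $\sigma\hookrightarrow\pi|_{H}$, let $\pi[\sigma]\subseteq V$ be the $\sigma$-isotypic component, i.e. the sum of all $H$-subrepresentations of $V$ isomorphic to $\sigma$, so that $\pi[\sigma]\cong m\,\sigma$ with $m=\dim\Hom_{H}(\sigma,\pi|_{H})$. Choosing $K\le H$ compact open with $\sigma^{K}\neq 0$, the space $\pi[\sigma]^{K}$ is a direct sum of $m$ copies of the nonzero space $\sigma^{K}$, whence $m\le\dim\pi^{K}<\infty$. Now pick coset representatives $g_{1}=e,g_{2},\dots,g_{r}$ for $G/H$, where $r=[G:H]$. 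Each $\pi(g_{j})$ is a linear bijection of $V$ satisfying $\pi(g_{j})\pi(h)=\pi(g_{j}hg_{j}^{-1})\pi(g_{j})$, so it carries $\pi[\sigma]$ bijectively onto the ${}^{g_{j}}\sigma$-isotypic component $\pi[{}^{g_{j}}\sigma]$, and in particular $\dim\Hom_{H}({}^{g_{j}}\sigma,\pi|_{H})=m$ as well. The subspace $\Sigma=\sum_{j=1}^{r}\pi(g_{j})\pi[\sigma]$ is $G$-stable (for $g\in G$ one has $gg_{j}\in g_{k}H$ and $\pi[\sigma]$ is $H$-stable) and nonzero, hence $\Sigma=V$ by irreducibility of $\pi$.

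Everything now follows. Letting $\pi_{1},\dots,\pi_{M}$ be the distinct classes among $\{{}^{g_{j}}\sigma\}$ (so $M\le r$), the distinct isotypic components sum directly, giving $\pi|_{H}=\bigoplus_{i=1}^{M}\pi[\pi_{i}]=\bigoplus_{i=1}^{M}m_{i}\pi_{i}$, a finite direct sum of irreducible admissible representations of $H$; this is part (i). For part (ii), each $\pi_{i}$ equals ${}^{g_{j}}\sigma$ for some $j$, and the bijection $\pi(g_{j})\colon\pi[\sigma]\to\pi[{}^{g_{j}}\sigma]$ shows $m_{i}=m$; hence $m_{1}=\dots=m_{M}=m$. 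The main obstacle is the point flagged above --- producing an irreducible $H$-subrepresentation of $\pi|_{H}$, equivalently showing $\pi|_{H}$ is semisimple of finite length; once that is in hand the rest is bookkeeping with the translation operators $\pi(g_{j})$, and the finiteness of $M$ and of each $m_{i}$ comes for free from $[G:H]<\infty$ and admissibility. In the reductive $p$-adic case one could instead invoke the finite-length theorem for finitely generated admissible representations, noting that $\pi|_{H}$ is generated over $H$ by the finitely many vectors $\pi(g_{j})v$.
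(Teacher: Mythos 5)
The paper does not prove this statement at all: it is quoted verbatim from Gelbart--Knapp, with a pointer to \cite{GelKna} (Lemmas 2.1 and 2.3), so the only thing to measure your argument against is the standard Clifford-theory proof in that source --- which is essentially what you reproduce. Your bookkeeping is correct: admissibility of $\pi|_{H}$ is immediate because $H$ is open; the translation operators $\pi(g_{j})$ carry the $\sigma$-isotypic component bijectively onto the ${}^{g_{j}}\sigma$-isotypic component; the sum of these finitely many isotypic pieces is $G$-stable, hence all of $V$; distinct isotypic components sum directly; and the bijections $\pi(g_{j})$ force all multiplicities $m_{i}$ to equal the multiplicity $m$ of $\sigma$, which is finite by the $K$-fixed-vector bound. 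You also correctly isolate the one non-formal input, namely that $\pi|_{H}$ contains an irreducible $H$-subrepresentation. The only caution concerns your parenthetical sketch of that input: choosing a nonzero $H$-subrepresentation $W$ generated by a space of $K$-fixed vectors of minimal positive dimension most naturally shows that $W$ has a unique maximal proper subrepresentation (every proper subrepresentation has zero $K$-fixed vectors), i.e. it produces an irreducible \emph{quotient}, not yet a subrepresentation; to convert this into a subrepresentation one either runs the argument for $\pi^{\vee}$ and dualizes, using $(\pi^{\vee})|_{H}\simeq(\pi|_{H})^{\vee}$ since $H$ is open, or invokes finite length of finitely generated admissible representations as you suggest (noting, as you do, that $\pi|_{H}$ is generated by the finitely many translates $\pi(g_{j})v$). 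With that step supplied --- it is exactly Lemma 2.1 of \cite{GelKna}, which the paper itself simply cites --- your proof is complete and is the same argument as in the original reference; the abelian hypothesis on $G/H$ is not even needed for this particular statement, only finiteness of the quotient.
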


\begin{theorem}[Gelbart-Knapp]\label{Gelbart-Knapp-2} Let $G$ be a locally compact totally disconnected group and $H$ be an open normal subgroup of $G$ such that $G/H$ is finite abelian, and let $\pi$ be an irreducible admissible representation of $H$. Then
\begin{enumerate}
\item[(i)] There exists an irreducible admissible representation $\tilde{\pi}$ of $G$ such that $\tilde{\pi}|_{H}$ contains $\pi$ as a constituent.
\item[(ii)] Suppose $\tilde{\pi}$ and $\tilde{\pi}'$ are irreducible admissible representations of $G$ whose restrictions to $H$ are multiplicity free and contain $\pi$. Then $\tilde{\pi}|_{H}$ and $\tilde{\pi}'|_{H}$ are equivalent and $\tilde{\pi}$ is equivalent with $\tilde{\pi}'\otimes \chi$ for some character $\chi$ of $G$ that is trivial on $H$.
\end{enumerate}
\end{theorem}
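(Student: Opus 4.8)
The plan is to prove (i) by inducing $\pi$ up to $G$ and extracting an irreducible piece, and to prove (ii) by the standard Clifford-theoretic reduction to the stabilizer in $G$ of the class of $\pi$, followed by an extension-of-characters argument.

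For (i), consider $\Ind_H^G\pi$. Since $[G:H]<\infty$ this induced representation is admissible, hence of finite length, so it has an irreducible quotient $\tilde\pi$. As $\Ind_H^G$ is a left adjoint of restriction (finite index), Frobenius reciprocity gives $\Hom_H(\pi,\tilde\pi|_H)\cong\Hom_G(\Ind_H^G\pi,\tilde\pi)\neq0$, so $\pi$ embeds in $\tilde\pi|_H$ and $\tilde\pi$ is the desired representation.

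For (ii), set $S=\Stab_G([\pi])$, an open normal subgroup of $G$ containing $H$ (normal since $G/H$ is abelian), with $S/H$ a subgroup of the finite abelian group $G/H$. By Theorem~\ref{Gelbart-Knapp-1} the constituents of $\tilde\pi|_H$ form a single $G$-orbit with equal multiplicities, so the multiplicity-free hypothesis forces $\tilde\pi|_H\cong\bigoplus_i\pi_i$, the sum over the full $G$-orbit of $[\pi]$; the same holds for $\tilde\pi'$, whence $\tilde\pi|_H\cong\tilde\pi'|_H$. Since $\tilde\pi|_S$ is semisimple (Theorem~\ref{Gelbart-Knapp-1} applied to the pair $(G,S)$), pick an irreducible subrepresentation $\rho\subseteq\tilde\pi|_S$ with $\rho|_H\supseteq\pi$; applying Clifford theory inside $S$ and using that $S$ fixes $[\pi]$ shows $\rho|_H$ is a multiple of $\pi$, and multiplicity-freeness of $\tilde\pi|_H$ forces $\rho|_H\cong\pi$, so $\rho$ is a genuine extension of $\pi$ to $S$. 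Then $\Ind_S^G\rho$ is irreducible: by Mackey's formula for the normal subgroup $S$ its restriction to $S$ is $\bigoplus_{g\in G/S}\rho^g$, and for $g\notin S$ we have $\rho^g|_H\cong\pi^g\not\cong\pi$, so $\rho^g\not\cong\rho$ and only the trivial coset contributes to $\operatorname{End}_G(\Ind_S^G\rho)$, which is therefore $\mathbb{C}$ by Schur's lemma. Frobenius reciprocity applied to $\rho\subseteq\tilde\pi|_S$ exhibits $\tilde\pi$ as a constituent of $\Ind_S^G\rho$, so $\tilde\pi\cong\Ind_S^G\rho$; likewise $\tilde\pi'\cong\Ind_S^G\rho'$ for another extension $\rho'$ of $\pi$ to $S$.

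Finally, since $S/H$ is abelian the two extensions differ by a twist: the one-dimensional space $\Hom_H(\rho,\rho')$ carries an action of $S/H$ through a character $\psi$, and an intertwiner in it realizes $\rho'\cong\rho\otimes\psi$. Because $\mathbb{C}^\times$ is divisible, $\psi$ extends to a character $\chi$ of $G$ trivial on $H$, and the projection formula gives $\tilde\pi'\cong\Ind_S^G(\rho\otimes\chi|_S)\cong(\Ind_S^G\rho)\otimes\chi\cong\tilde\pi\otimes\chi$, as required. The main obstacle is the reduction to $S$ in (ii): one must know that a genuine (not merely projective) extension $\rho$ of $\pi$ to $S$ exists — precisely where the hypothesis that such a $\tilde\pi$ exists is used — and that $\Ind_S^G\rho$ is not merely irreducible but actually isomorphic to $\tilde\pi$ rather than containing it properly. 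The remaining ingredients — admissibility of induction from a finite-index subgroup, semisimplicity of restrictions, two-sided Frobenius reciprocity, Mackey's formula for a normal subgroup, and the torsor structure of the set of extensions under $\widehat{S/H}$ — are routine Clifford theory.
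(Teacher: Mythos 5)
The paper offers no proof of this statement: it is quoted from Gelbart--Knapp (\cite{GelKna}, Lemmas 2.1 and 2.3), so there is no in-paper argument to compare yours against. Your proposal is the standard Clifford-theoretic proof and its overall structure is sound: induction plus Frobenius reciprocity for (i); for (ii), reduction to the stabilizer $S=\Stab_G([\pi])$, production of a genuine extension $\rho$ of $\pi$ inside $\tilde{\pi}|_S$, irreducibility of $\Ind_S^G\rho$, the identification $\tilde{\pi}\simeq \Ind_S^G\rho$, and the twist by a character of $S/H$ extended to $G/H$ using divisibility of $\mathbb{C}^{\times}$; the final twisting step (the $S$-action on the one-dimensional space $\Hom_H(\rho,\rho')$, extension of $\psi$ to $\chi$, projection formula) is correct as written.

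Two steps need repair. First, in (i), ``admissible, hence of finite length'' is not a valid implication for smooth representations in general; instead note that $(\Ind_H^G\pi)|_H\simeq\bigoplus_{g\in G/H}\pi^{g}$ is a finite direct sum of irreducibles, so $\Ind_H^G\pi$ has finite length (or simply that it is finitely generated, hence has an irreducible quotient). Second, in (ii), $\operatorname{End}_G(\Ind_S^G\rho)=\mathbb{C}$ does not by itself imply irreducibility: a non-split extension of two non-isomorphic irreducibles also has only scalar endomorphisms. Either observe that $\Ind_S^G\rho$ is semisimple (its restriction to the finite-index subgroup $S$ is semisimple, and averaging projections over $G/S$ in characteristic $0$ lifts semisimplicity to $G$), or argue directly: $(\Ind_S^G\rho)|_S=\bigoplus_{g\in G/S}\rho^{g}$ is multiplicity free with pairwise non-isomorphic summands (their $H$-restrictions are the pairwise non-isomorphic $\pi^{g}$), so every $S$-submodule is a partial sum of these, and $G$ permutes the summands transitively, forcing any nonzero $G$-submodule to be the whole space. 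Finally, the single-orbit fact you invoke (that the irreducible constituents of $\tilde{\pi}|_H$ form one $G$-orbit) is not literally contained in Theorem~\ref{Gelbart-Knapp-1} as quoted; it follows from the usual remark that the sum of a $G$-orbit of isotypic components is a nonzero $G$-stable subspace of the irreducible $\tilde{\pi}$, and that line should be included. With these repairs the argument is complete.
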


\subsection{Unramified principal series and representations with Iwahori fixed vectors}
We define the notion of an unramified principal series representation and state an important characterization of representations with non-zero vectors fixed under an Iwahori subgroup due to Borel and Casselman. We refer to (\cite{Bor}, \cite{Cas}) for a proof.\\

Throughout this section, we let $G$ be the group of $F$-points of a connected reductive algebraic group defined and split over $F$. We write $T$ for a maximal $F$-split torus in $G$. We also fix a Borel subgroup $B$ defined over $F$ such that $B\supset T$ and write $U$ for the unipotent radical of $B$.\\

Let $(\rho,W)$ be a smooth representation of $T$. We can view $\rho$ as a smooth representation of $B$ which is trivial on $U$, and form the smooth induced representation $\Ind_{B}^{G}\rho$. The representations $\Ind_{B}^{G}(\mu)$, where $\mu$ is an unramified character of $T$ (i.e., $\mu|_{T_{\circ}}=1$ where $T_{\circ}$ is the $\mathfrak{O}$-points of $T$) are called the unramified principal series representations.

\begin{theorem}[Borel-Casselman]\label{Borel-Casselman} Let $(\pi,W)$ be any irreducible admissible representation of $G$. Then the following assertions are equivalent.
\begin{enumerate}
\item[(i)] There are non-zero vectors in $W$ invariant under $I$.
\item[(ii)] There exists some unramified character $\mu$ of $T$ such that $\pi$ imbeds as a sub-representation of $\Ind_{B}^{G}\mu$.
\end{enumerate}
\end{theorem}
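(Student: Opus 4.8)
\emph{Proof strategy.} The plan is to run both implications through the Jacquet module $\pi_{U}=\pi/\pi(U)$, regarded as a smooth representation of $T=B/U$, and to reduce everything to a single non-formal input: Casselman's Iwahori-factorization lemma, which says that for an \emph{arbitrary} smooth representation $(\sigma,V)$ of $G$ the canonical projection $V\to V_{U}$ restricts to a \emph{surjection} $V^{I}\twoheadrightarrow (V_{U})^{T_{\circ}}$, where $T_{\circ}=T\cap I$ (see \cite{Cas}, or \cite{Bor}). Apart from this I will use Jacquet's theorems that $\pi_{U}$ is admissible and finitely generated as a smooth $T$-representation when $\pi$ is admissible, together with Frobenius reciprocity for smooth induction, $\Hom_{G}(\pi,\Ind_{B}^{G}\mu)\cong\Hom_{B}(\pi|_{B},\mu)\cong\Hom_{T}(\pi_{U},\mu)$, the last identification because $\mu$ is inflated from $T$. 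Since the modulus character $\delta_{B}$ of $B$ is unramified, the distinction between normalized and unnormalized Jacquet/induction functors is immaterial below.

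\emph{(ii) $\Rightarrow$ (i).} Suppose $\pi$ embeds in $\Ind_{B}^{G}\mu$ with $\mu$ unramified. Frobenius reciprocity gives a nonzero $T$-equivariant surjection $q\colon\pi_{U}\twoheadrightarrow\mu$. Decompose $\pi_{U}=\bigoplus_{\psi}\pi_{U}[\psi]$ into $T_{\circ}$-isotypic components; since $T$ centralizes the compact group $T_{\circ}$ this is a decomposition of $T$-modules, and as $\mu|_{T_{\circ}}=1$ the map $q$ kills every summand with $\psi\neq 1$ and hence factors through $\pi_{U}[1]=(\pi_{U})^{T_{\circ}}$. In particular $(\pi_{U})^{T_{\circ}}\neq 0$, so Casselman's lemma yields a surjection $\pi^{I}\twoheadrightarrow(\pi_{U})^{T_{\circ}}\neq 0$, whence $\pi^{I}\neq 0$.

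\emph{(i) $\Rightarrow$ (ii).} Suppose $\pi^{I}\neq 0$. By Casselman's lemma $(\pi_{U})^{T_{\circ}}\neq 0$, and, as in the previous paragraph, this space $M:=(\pi_{U})^{T_{\circ}}=\pi_{U}[1]$ is a nonzero $T$-module direct summand of $\pi_{U}$, finitely generated over $T$, on which $T_{\circ}$ acts trivially. Thus $M$ is a nonzero finitely generated module over $\mathbb{C}[T/T_{\circ}]\cong\mathbb{C}[X_{*}(T)]$, a Laurent polynomial ring in $\dim T$ variables; such a module admits a one-dimensional quotient, i.e. $M$ surjects onto a character $\mu$ of $T$ trivial on $T_{\circ}$ — an unramified character. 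Precomposing with $\pi_{U}\twoheadrightarrow M$ exhibits $\mu$ as a quotient of $\pi_{U}$, so $\Hom_{T}(\pi_{U},\mu)\neq 0$, and Frobenius reciprocity produces a nonzero map $\pi\to\Ind_{B}^{G}\mu$, which is an embedding since $\pi$ is irreducible.

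\emph{Main obstacle.} The two reductions above are essentially formal manipulations with Jacquet modules and isotypic decompositions; the one substantive ingredient is Casselman's Iwahori-factorization lemma, and that is precisely where the fine structure of the Iwahori subgroup is used — its factorization $I=(I\cap\bar U)(I\cap T)(I\cap U)$ relative to $B$ and the opposite Borel, plus the contraction argument showing that averaging an $I$-fixed vector over a sufficiently large compact open subgroup of $U$ does not alter its image in $\pi_{U}$. I expect this lemma, rather than anything in the reductions, to be the real content; in the write-up I would either cite it from \cite{Cas} or \cite{Bor} or reproduce Casselman's short proof of it.
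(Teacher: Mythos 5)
The paper itself does not prove this theorem --- it simply cites \cite{Bor} and \cite{Cas} --- so your proposal has to be judged on its own terms. Your (ii) $\Rightarrow$ (i) half is correct: Frobenius reciprocity gives a nonzero $T$-map $\pi_{U}\rightarrow\mu$, unramifiedness of $\mu$ forces it to factor through $(\pi_{U})^{T_{\circ}}$, hence $(\pi_{U})^{T_{\circ}}\neq 0$, and the surjectivity $\pi^{I}\twoheadrightarrow(\pi_{U})^{T_{\circ}}$ then forces $\pi^{I}\neq 0$. (A minor quibble: the surjectivity lemma is usually proved for admissible representations, not arbitrary smooth ones; since you only apply it to the admissible $\pi$, this costs nothing.)

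The gap is in (i) $\Rightarrow$ (ii), at the sentence ``By Casselman's lemma $(\pi_{U})^{T_{\circ}}\neq 0$.'' The lemma you invoke only asserts that the canonical map $\pi^{I}\rightarrow(\pi_{U})^{T_{\circ}}$ is \emph{surjective}, and a surjection out of a nonzero space can perfectly well be the zero map onto the zero space; surjectivity transfers nonvanishing from target to source (which is exactly how you used it in the other direction), not from source to target. What this direction actually needs is the complementary statement: a nonzero $I$-fixed vector does not die in the Jacquet module, i.e.\ $\pi^{I}\cap\Ker(W\rightarrow\pi_{U})=0$, equivalently the canonical map $\pi^{I}\rightarrow(\pi_{U})^{T_{\circ}}$ is injective (or at least nonzero); equivalently, an irreducible representation with Iwahori-fixed vectors is not supercuspidal and its cuspidal support is an unramified character of $T$. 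That is the genuinely hard half of the Borel--Casselman theorem, proved by Casselman's analysis of the kernel of $V^{K}\rightarrow(V_{U})^{K\cap T}$ via the dominant Hecke operators (``canonical lifting''), not by the averaging argument that yields surjectivity, and your proposal never supplies it --- so the direction (i) $\Rightarrow$ (ii) is unproved as written. Once that nonvanishing is granted, the remainder of your argument (the finitely generated $\mathbb{C}[T/T_{\circ}]$-module $(\pi_{U})^{T_{\circ}}$ has a one-dimensional, hence unramified, quotient $\mu$, and Frobenius reciprocity plus irreducibility embeds $\pi$ into $\Ind_{B}^{G}\mu$) is sound.
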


\subsection{Prasad's idea for computing the sign} In \cite{Pra[2]}, Prasad gives a criterion to compute the sign for an irreducible self-dual generic representation of a $p$-adic group $G$. He shows that for generic representations the sign is determined by the value of the central character $\omega_{\pi}$ at a special central element. We recall his result below.

\begin{theorem}[Prasad]\label{Prasad's theorem} Let $K$ be a compact open subgroup of $G$. Let $s$ be an element of $G$ which normalizes $K$ and whose square belongs to the center of $G$. Let $\psi_{K}: K \rightarrow \mathbb{C}^{\times}$ be a one dimensional representation of $K$ which is taken to its inverse by inner conjugation action of $s$ on $K$. Let $\pi$ be an irreducible representation of $G$ in which the character $\psi_{K}$ of $K$ appears with multiplicity $1$. Then if $\pi$ is self-dual, $\varepsilon{(\pi)}$ is $1$ if and only if the element $s^{2}$ belonging to the center of $G$ operates by $1$ on $\pi$.
\end{theorem}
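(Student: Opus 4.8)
The plan is to produce a single explicit vector on which the invariant form can be nonzero, and then to read off $\varepsilon(\pi)$ from that value together with the scalar by which the central element $s^{2}$ acts. Fix a nonzero vector $v_{0}$ spanning the subspace $W^{\psi_{K}}=\{w\in W: \pi(k)w=\psi_{K}(k)w \ \forall k\in K\}$, which is one–dimensional by the multiplicity–one hypothesis. Since $\pi$ is self-dual we have the non-degenerate $G$-invariant bilinear form $(\,,\,)$ on $W$ constructed in Section~\ref{sign of a rep}, with $\varepsilon(\pi)=\pm1$ recording whether it is symmetric or skew. The first step is to locate $\pi(s)v_{0}$ among the $K$-isotypic pieces: for $k\in K$ we have $s^{-1}ks\in K$ (as $s$ normalizes $K$), and the hypothesis that conjugation by $s$ sends $\psi_{K}$ to $\psi_{K}^{-1}$ gives $\psi_{K}(s^{-1}ks)=\psi_{K}(k)^{-1}$, whence
\begin{equation*}
\pi(k)\pi(s)v_{0}=\pi(s)\pi(s^{-1}ks)v_{0}=\psi_{K}(s^{-1}ks)\,\pi(s)v_{0}=\psi_{K}(k)^{-1}\pi(s)v_{0}.
\end{equation*}
Thus $\pi(s)v_{0}\in W^{\psi_{K}^{-1}}$, and since $\pi(s)$ is invertible it carries $W^{\psi_{K}}$ isomorphically onto $W^{\psi_{K}^{-1}}$; in particular $W^{\psi_{K}^{-1}}$ is the one-dimensional space $\mathbb{C}\,\pi(s)v_{0}$.

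Next I would show $(v_{0},\pi(s)v_{0})\neq 0$. The linear functional $L\colon w\mapsto(v_{0},w)$ on $W$ is nonzero because the form is non-degenerate, and it is $\psi_{K}^{-1}$-equivariant for the $K$-action: $L(\pi(k)w)=(\pi(k^{-1})v_{0},\pi(k^{-1})\pi(k)w)=\psi_{K}(k)^{-1}L(w)$. Consequently $L$ vanishes on every $K$-isotypic subspace of $\pi|_{K}$ other than the one belonging to $\psi_{K}^{-1}$, so, being nonzero, $L$ is nonzero on $W^{\psi_{K}^{-1}}=\mathbb{C}\,\pi(s)v_{0}$; hence $(v_{0},\pi(s)v_{0})\neq 0$. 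This is the one point where the multiplicity–one assumption is indispensable: it is what forces $W^{\psi_{K}^{-1}}$ to be exactly the line through $\pi(s)v_{0}$, so that the nonzero functional $L$ cannot evaluate to $0$ there.

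It remains to compute. Writing $\omega_{\pi}$ for the central character of $\pi$ and using $s^{2}\in Z(G)$, so that $\pi(s^{2})=\omega_{\pi}(s^{2})$ acts as a scalar, $G$-invariance of the form gives
\begin{equation*}
(\pi(s)v_{0},v_{0})=(\pi(s)\pi(s)v_{0},\pi(s)v_{0})=(\pi(s^{2})v_{0},\pi(s)v_{0})=\omega_{\pi}(s^{2})\,(v_{0},\pi(s)v_{0}).
\end{equation*}
On the other hand, the very definition of $\varepsilon(\pi)$ (as in Section~\ref{sign of a rep}) yields $(\pi(s)v_{0},v_{0})=\varepsilon(\pi)\,(v_{0},\pi(s)v_{0})$. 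Dividing by the nonzero quantity $(v_{0},\pi(s)v_{0})$ gives $\varepsilon(\pi)=\omega_{\pi}(s^{2})$; in particular $\omega_{\pi}(s^{2})=\pm1$, and $\varepsilon(\pi)=1$ precisely when $\omega_{\pi}(s^{2})=1$, that is, precisely when $s^{2}$ operates by $1$ on $\pi$. The only genuine obstacle in this argument is the non-vanishing established in the second paragraph; the rest is a direct manipulation of $G$-invariance and of the scalar action of the central element.
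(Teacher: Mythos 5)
Your proof is correct, and since the paper does not prove this theorem itself (it is quoted from \cite{Pra[2]}), the right comparison is with Prasad's original argument, which yours reproduces essentially verbatim: place $\pi(s)v_{0}$ in the $\psi_{K}^{-1}$-eigenspace, use multiplicity one plus semisimplicity of $\pi|_{K}$ to force $(v_{0},\pi(s)v_{0})\neq 0$, and compare $(\pi(s)v_{0},v_{0})$ with $(v_{0},\pi(s)v_{0})$ to get $\varepsilon(\pi)=\omega_{\pi}(s^{2})$. One cosmetic point: the surjectivity of $\pi(s):W^{\psi_{K}}\rightarrow W^{\psi_{K}^{-1}}$ does not follow from invertibility of $\pi(s)$ alone but from repeating the eigencharacter computation with $s^{-1}$ (giving $\pi(s^{-1})W^{\psi_{K}^{-1}}\subseteq W^{\psi_{K}}$, hence $\dim W^{\psi_{K}^{-1}}\leq 1$), which is all you actually need to conclude $W^{\psi_{K}^{-1}}=\mathbb{C}\,\pi(s)v_{0}$.
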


\subsection{Compact approximation of Whittaker models}

In this section, we recall an important result of Rodier which we need in the proof of the main theorem. We refer the reader to ~\cite{Rod} for a more detailed account of Rodier's results.

\begin{theorem}[Rodier]\label{Rodier} Let $\pi$ be an irreducible admissible representation of $G$ and $\psi$ be a non-degenerate character of $U$. There then exists a compact open subgroup $K$ and a character $\psi_{K}$ of $K$ such that \begin{displaymath}\displaystyle dim_{\mathbb{C}}\Hom_{K}(\pi, \psi_{K})= dim_{\mathbb{C}}\Hom_{U}(\pi, \psi).\end{displaymath} Therefore, if $\pi$ is generic, $dim_{\mathbb{C}}\Hom_{K}(\pi, \psi_{K})=1$.
\end{theorem}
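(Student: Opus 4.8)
The plan is to build $K$ and $\psi_K$ by deforming a fixed small compact open subgroup: shrink it in the $\overline{U}$-- and $T$--directions while inflating it in the $U$--direction, so that the inflated part eventually absorbs enough of $U$ to detect the whole Whittaker functional. Fix $a\in T$ with $|\alpha(a)|>1$ for every positive root $\alpha$, so that conjugation by $a$ expands the root subgroups of $U$ and contracts those of $\overline{U}$. Fix a compact open subgroup $K_{0}$ with an Iwahori factorisation $K_{0}=K_{0}^{-}K_{0}^{0}K_{0}^{+}$ relative to $(T,U)$ (with both orderings of the factors, after shrinking if necessary), deep enough that $\psi|_{K_{0}^{+}}$ is trivial. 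For $n\geq 1$ set $K_{n}:=a^{n}K_{0}a^{-n}$; then $K_{n}=K_{n}^{-}K_{n}^{0}K_{n}^{+}$, with $K_{n}^{+}=a^{n}K_{0}^{+}a^{-n}$ increasing and exhausting $U$, $K_{n}^{-}=a^{n}K_{0}^{-}a^{-n}$ shrinking to $\{1\}$, and $K_{n}^{0}=K_{0}^{0}$ fixed. Using that the product map $K_{n}^{-}\times K_{n}^{0}\times K_{n}^{+}\to K_{n}$ is a bijection, define $\psi_{n}\colon K_{n}\to\mathbb{C}^{\times}$ by $\psi_{n}(k_{1}k_{2}k_{3})=\psi(k_{3})$ for $k_{1}\in K_{n}^{-}$, $k_{2}\in K_{n}^{0}$, $k_{3}\in K_{n}^{+}$. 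The theorem reduces to: (i) $\psi_{n}$ is a well-defined character of $K_{n}$ for $n$ inside an admissible range, and (ii) $\dim_{\mathbb{C}}\Hom_{K_{n}}(\pi,\psi_{n})=\dim_{\mathbb{C}}\Hom_{U}(\pi,\psi)$ for all such $n$ sufficiently large; the final assertion is then immediate, since $\dim_{\mathbb{C}}\Hom_{U}(\pi,\psi)=1$ when $\pi$ is generic (uniqueness of Whittaker models for split groups).

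For (i), one checks multiplicativity of $\psi_{n}$ through the factorisation. The commutator relations among root subgroups, together with $[U,T]\subset U$, confine the ``defect'' to $K_{n}^{-}K_{n}^{0}$ and to the $U$-part of $[K_{n}^{+},K_{n}^{-}]=a^{n}[K_{0}^{+},K_{0}^{-}]a^{-n}$; since $[K_{0}^{+},K_{0}^{-}]$ can be made as deep as desired by taking $K_{0}$ deep, this $U$-part remains inside $\ker(\psi|_{K_{n}^{+}})$ as long as $n$ does not exceed a bound linear in the depth of $K_{0}$. Thus, choosing $K_{0}$ deep first and then $n$ in the resulting window, $\psi_{n}$ is a genuine character restricting to $\psi$ on $K_{n}^{+}$ and trivial on $K_{n}^{-}K_{n}^{0}$. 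This is routine but bookkeeping-heavy.

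For (ii), first note $\Hom_{K_{n}}(\pi,\psi_{n})$ is finite-dimensional: $K_{n}$ is compact, $\psi_{n}$ has open kernel, and admissibility bounds the $\psi_{n}$-isotypic subspace $\pi[\psi_{n}]=\{v:\pi(k)v=\psi_{n}(k)v\ \forall k\in K_{n}\}$, whose dimension equals $\dim_{\mathbb{C}}\Hom_{K_{n}}(\pi,\psi_{n})$ since $\pi|_{K_{n}}$ is semisimple. The cleanest way to pin down this dimension is via the twisted Jacquet module $\pi_{U,\psi}=\pi/\langle\pi(u)v-\psi(u)v:u\in U,\ v\in\pi\rangle$. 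On one side, $\Hom_{U}(\pi,\psi)$ is the linear dual of $\pi_{U,\psi}$, and $\pi_{U,\psi}$ is finite-dimensional because $\psi$ is non-degenerate (its stabiliser in $T$ coincides with the centre $Z(G)$), so $\dim_{\mathbb{C}}\pi_{U,\psi}=\dim_{\mathbb{C}}\Hom_{U}(\pi,\psi)$ — and it is one-dimensional when $\pi$ is generic. On the other side, one shows that for $n$ large in the admissible window the natural map $\pi[\psi_{n}]\hookrightarrow\pi\twoheadrightarrow\pi_{U,\psi}$ is an isomorphism. Surjectivity is obtained by lifting a class to $v\in\pi$, applying the idempotent $e_{\psi}=\mathrm{vol}(K_{n}^{+})^{-1}\int_{K_{n}^{+}}\overline{\psi(u)}\,\pi(u)\,du$, and then averaging over $K_{n}^{-}K_{n}^{0}$: the Iwahori factorisation ensures these compose to the projection onto $\pi[\psi_{n}]$, while the contracting property of $a$ keeps the second average from altering the class. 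Injectivity uses that a relation witnessing vanishing in $\pi_{U,\psi}$ involves only finitely many elements of $U$, hence lies in $K_{n}^{+}$ for $n$ large, so it is killed by $e_{\psi}$, which fixes elements of $\pi[\psi_{n}]$. This is the twisted analogue of Casselman's theorem that $\pi^{K_{n}}\to(\pi_{U})^{K_{n}\cap T}$ is bijective for $n\gg 0$, and its proof likewise rests on the Iwahori factorisation of $K_{n}$.

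The main obstacle is making the isomorphism $\pi[\psi_{n}]\cong\pi_{U,\psi}$ hold for a single exponent $n$ uniformly in all vectors, rather than one vector at a time: the naive averaging arguments deliver everything only stage by stage, and the uniform statement is the genuine content of Rodier's theorem. It is extracted, exactly as for ordinary Jacquet modules, from the finiteness properties of $\pi_{U,\psi}$ (finite generation as a quotient of $\pi$, and finite-dimensionality) combined with the combinatorics of the Iwahori factorisations of the groups $K_{n}$; the tension between ``$n$ large'' (needed for (ii)) and ``$n$ not too large'' (needed for (i)) is reconciled by first choosing $K_{0}$ sufficiently deep and then picking $n$ accordingly.
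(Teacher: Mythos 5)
The paper offers no proof of this statement: it is Rodier's theorem, quoted with a reference to \cite{Rod}. Your sketch is, in outline, Rodier's own argument --- compact open subgroups admitting an Iwahori factorisation whose $U$-part grows to exhaust $U$ and carries $\psi$, whose $\overline{U}$- and $T$-parts shrink and carry the trivial character, and an identification of the $\psi_{n}$-isotypic space with the twisted Jacquet module $\pi_{U,\psi}$ by Casselman-style averaging. In Rodier's normalisation one keeps the depth-$m$ congruence subgroup and stretches the character by a strictly dominant element $a^{m}$; conjugating his pair $(K_{m},\psi_{m})$ by $a^{m}$ gives exactly your picture, so your two-step bookkeeping (take $K_{0}$ deep first, then conjugate within the admissible window) is the same construction, not a different route. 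Two remarks. First, the step you yourself flag --- that $\pi[\psi_{n}]\simeq\pi_{U,\psi}$ holds for a single $n$ uniformly in all vectors --- is the entire technical content of the theorem; you describe the correct mechanism (the twisted analogue of Casselman's isomorphism $\pi^{K_{n}}\to(\pi_{U})^{K_{n}\cap T}$ for $n\gg 0$) but do not carry it out, so as written this is a plan rather than a proof. Second, invoking finite-dimensionality of $\pi_{U,\psi}$ as an input is uncomfortably close to circular, since the standard proofs of that finiteness rest on exactly this approximation (or on the character expansion built from it); it is cleaner to observe that admissibility makes $\Hom_{K_{n}}(\pi,\psi_{n})$ finite-dimensional, so finiteness of the Whittaker dimension comes out of the theorem rather than going into it, and the value $1$ in the generic case is supplied by uniqueness of Whittaker functionals, exactly as in the paper's statement.
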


\section{Main Theorem}\label{main result proof}

In this section, we prove the main theorem. We recall the statement below.

\begin{theorem}[Main Theorem] Let $(\pi,W)$ be an irreducible smooth self-dual representation of $G$ with non-zero vectors fixed under an Iwahori subgroup $I$ in $G$. Suppose that $\pi$ is also generic. Then $\varepsilon{(\pi)}=1$.
\end{theorem}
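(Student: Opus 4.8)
The strategy is to invoke Prasad's criterion (Theorem~\ref{Prasad's theorem}) with a well-chosen compact open subgroup $K$, character $\psi_K$, and normalizing element $s$. Because $\pi$ is generic, Rodier's theorem (Theorem~\ref{Rodier}) supplies a compact open subgroup $K$ and a character $\psi_K$ of $K$ with $\dim_{\mathbb{C}}\Hom_K(\pi,\psi_K)=1$, so the multiplicity-one hypothesis in Prasad's theorem is automatic once we match up the data. The heart of the argument is then twofold: (1) to produce an element $s\in G$ normalizing $K$ with $s\psi_K s^{-1}=\psi_K^{-1}$ and $s^2$ central, and (2) to show that $s^2$ acts trivially on $\pi$, i.e. that the central character $\omega_\pi(s^2)=1$. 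Granting both, Prasad's theorem gives $\varepsilon(\pi)=1$ immediately.

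For step (1), the natural candidate for $s$ comes from the Whittaker datum. Rodier's $K$ and $\psi_K$ are built from a neighborhood filtration adapted to $U$ and the non-degenerate character $\psi$ of $U$; one takes $s$ to be (a lift of) the long Weyl element $w_0$, suitably normalized, since conjugation by $w_0$ sends $\psi$ to a character in the same $T$-orbit as $\bar\psi=\psi^{-1}$, and one adjusts by an element of $T$ so that the two coincide on $U$. The standard trick here (used by Prasad for $p$-adic groups and earlier by him for finite groups of Lie type) is that $w_0$ can be chosen so that $w_0^2$ lies in $Z(G)$; for a split group one works with a Tits-style lift of $w_0$ in the derived group, whose square is a central element of order dividing $2$. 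One must check that this $s$ genuinely normalizes Rodier's $K$ — here it is important that $K$ can be arranged to be $s$-stable, which follows because the filtration subgroups underlying Rodier's construction are preserved by the pinned automorphism associated to $w_0\mapsto -w_0$, up to conjugation by $T_\circ\subset I$, exactly as in the computation carried out in Lemma~\ref{action of x} and Theorem~\ref{symplectic case}.

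For step (2), this is where the Iwahori-fixed hypothesis enters decisively. By Borel–Casselman (Theorem~\ref{Borel-Casselman}), $\pi$ embeds in an unramified principal series $\Ind_B^G\mu$ with $\mu$ an unramified character of $T$. The central character $\omega_\pi$ is therefore the restriction of $\mu$ (twisted by the modulus, which is trivial on $Z(G)$) to $Z(G)$, and in particular $\omega_\pi$ is unramified: it is trivial on $Z(G)\cap K_{\max}$, equivalently on the maximal compact subgroup $Z(G)_\circ$ of $Z(G)$. Since $s^2$ is a central element of finite order, it lies in this maximal compact subgroup of $Z(G)$, whence $\omega_\pi(s^2)=1$. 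This is the conceptual reason the sign is forced to be $+1$ in the Iwahori case, paralleling the (easy) $K_{\max}$-fixed case mentioned in the introduction.

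The main obstacle I anticipate is step (1): verifying that the element $s$ extracted from $w_0$ can simultaneously be taken to normalize Rodier's compact open subgroup $K$ \emph{and} to invert the character $\psi_K$, with $s^2$ central. Rodier's $K$ and $\psi_K$ are not canonical — they depend on choices in the filtration — so one has to either track these choices through his construction or argue that they may be made $w_0$-equivariant from the outset. A clean way to handle this is to reduce first to the semisimple simply connected case (using Theorems~\ref{Gelbart-Knapp-1} and~\ref{Gelbart-Knapp-2} to pass between $G$ and its derived group / a $z$-extension, noting that the sign and genericity are compatible with such manipulations), where the pinning and the Tits lift of $w_0$ are available in textbook form, and then to use the explicit description of Rodier's groups in terms of the root-subgroup filtration to see the required $s$-stability. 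Once the compatibility of $s$ with $(K,\psi_K)$ is in hand, the rest is a direct citation of Prasad and the unramifiedness observation above.
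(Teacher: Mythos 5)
Your overall architecture --- Rodier to get $(K,\psi_K)$ with multiplicity one, Prasad's criterion with a normalizing element $s$, and the Iwahori/unramifiedness observation to kill $\omega_\pi(s^2)$ --- matches the paper's in spirit, but there are two concrete gaps that your plan as written does not survive.

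\emph{The choice of $s$.} You take $s$ to be a Tits-style lift of the long Weyl element $w_0$. Such an $s$ sends $U$ to $\bar U$; it does not preserve the factorization $K_m = K_m^- K_m^\circ K_m^+$ underlying Rodier's construction, so conjugation by $s$ does not simply send $\psi_m(k^-k^\circ k^+)=\psi(k^+)$ to its inverse --- it scrambles the $K_m^{\pm}$ factors. Your remark that one ``adjusts by an element of $T$'' so that $\psi^{w_0}$ coincides with $\psi^{-1}$ on $U$ glosses over the fact that $\psi^{w_0}$ lives on $\bar U$, not $U$, and the appeal to a ``pinned automorphism associated to $w_0\mapsto -w_0$'' conflates an outer Chevalley involution with inner conjugation. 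The element the paper actually uses is much simpler: it is $s\in T_\circ$ with $\alpha(s)=-1$ for every simple root $\alpha$. Such an $s$ normalizes $U$ and $\bar U$ outright, preserves the $K_m$ filtration, and sends $x_\alpha(\lambda)$ to $x_\alpha(-\lambda)$ on simple root groups, which is exactly what inverts $\psi_m$; and $s^2\in T_\circ\cap Z\subset I$ gives $\omega_\pi(s^2)=1$ immediately from the Iwahori-fixed vector.

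\emph{The reduction to the well-behaved case.} The existence of $s\in T_\circ$ with $\alpha(s)=-1$ for all $\alpha\in\Delta$ is \emph{not} automatic: it requires $X/\mathbb{Z}\Phi$ torsion-free, i.e.\ $Z(G)$ connected, which is the content of Theorem~\ref{existence of the elt s}. Your proposal to reduce to ``the semisimple simply connected case'' or to a $z$-extension does not address this --- $\SL_n$ is simply connected but has center $\mu_n$, which is disconnected, and the identity map $\SL_2\to\SL_2$ is already a $z$-extension with disconnected center, so that hypothesis buys nothing here. Restricting to the derived group is worse still: irreducibility, self-duality and genericity all become delicate, and the sign may change. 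The paper's way around this is to embed $G$ in a larger split group $\tilde G$ with the \emph{same} root system but with $\tilde X/\mathbb{Z}\tilde\Phi$ torsion-free, then use the Gelbart--Knapp restriction theorems to produce an irreducible $\tilde\pi$ of $\tilde G$ containing $\pi$ with multiplicity one, twist $\tilde\pi$ so that it acquires $\tilde I$-fixed vectors, deal with the fact that $\tilde\pi$ is only self-dual up to an unramified character $\chi$ trivial on $G$ (so that $\varepsilon(\tilde\pi)=\omega_{\tilde\pi}(s^2)\chi(s)$), and finally show $\varepsilon(\tilde\pi)=\varepsilon(\pi)$. None of this machinery appears in your outline, and it is where most of the real work in the non-connected-center case lies.
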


We first prove the result when $G$ has connected center. In this case, we use a result of Rodier (Theorem~\ref{Rodier}) to get a compact open subgroup $K$ and a character $\psi_{K}$ of $K$ which appears with multiplicity one in $\pi|_{K}$. We show that there exists an element $s\in T$ satisfying the hypotheses of Prasad's Theorem (Theorem~\ref{Prasad's theorem}). Finally we use the fact that $\pi$ has non-zero Iwahori fixed vectors to show that $\varepsilon{(\pi)}=1$.\\

When the center of $G$ is not connected, we construct a split connected reductive $F$-group $\tilde{G}$ with a maximal $F$-split torus $\tilde{T}$. The group $\tilde{G}$ has a connected center $\tilde{Z}$ and contains $G$ as a subgroup. We show that there exists an irreducible representation $\tilde{\pi}$ of $\tilde{G}$ that contains the representation $\pi$ with multiplicity one on restriction to $G$ and has non-zero vectors fixed under an Iwahori subgroup in $\tilde{G}$. The representation $\tilde{\pi}$ is not necessarily self-dual but is self-dual up to a twist by a character $\chi$ of $\tilde{G}$ which is trivial on $G$. We can still attach a sign $\varepsilon(\tilde{\pi})$ to $\tilde{\pi}$. Finally, we show that $\varepsilon{(\tilde{\pi})}=\varepsilon({\pi})$ and $\varepsilon({\tilde{\pi}})=1$.\\


Throughout this section, we let $G$ be the group of $F$-points of a connected reductive algebraic group defined and split over $F$. We write $T$ for a maximal $F$-split torus in $G$. We also fix a Borel subgroup $B$ defined over $F$ such that $B\supset T$. We write $U$ for the unipotent radical of $B$ (respectively $\bar{U}$ for the $T$-opposite of $U$) and fix a non-degenerate character $\psi$ of $U$ such that $\Hom_{U}(\pi,\psi)\neq 0$ ($\psi$ exists since $\pi$ is generic). We let $X$ and $X^{\vee}$ be the character and cocharacter groups of $T$. We write $\Phi$ and $\Phi^{\vee}$ for the set of roots and coroots and $\Delta$ for the set of simple roots of $T$. Since $T$ is $F$-split, we have unique subgroups $T_{\circ}$ and $T_{1}$ of $T$ such that $T=T_{\circ}\times T_{1}$. To be more precise, the isomorphism $F^{\times}\simeq \mathfrak{O}^{\times} \times \mathbb{Z}$ given by $x\varpi^{n} \mapsto (x,n)$ induces the following isomorphism
\begin{displaymath}
T\simeq X^{\vee}\otimes F^{\times}\simeq X^{\vee}\otimes \mathfrak{O}^{\times}\oplus X^{\vee}\otimes \mathbb{Z}
\end{displaymath}
and we take $T_{\circ}$ and $T_{1}$ to be the subgroups of $T$ such that $T_{\circ}\simeq X^{\vee}\otimes \mathfrak{O}^{\times}$ ($\alpha^{\vee}\otimes y \rightarrow \alpha^{\vee}(y)$) and $T_{1}\simeq X^{\vee}\otimes \mathbb{Z}$ ( $\alpha^{\vee}\otimes n \rightarrow \alpha^{\vee}(\varpi^{n})$). We have a similar decomposition for $\tilde{T}$ (i.e., $\tilde{T}=\tilde{T}_{\circ}\times \tilde{T}_{1}$). In what follows, we let $Z_{\circ}= Z\cap T_{\circ}$ and $Z_{1}= Z\cap T_{1}$ (respectively $\tilde{Z}_{\circ}= \tilde{Z}\cap \tilde{T}_{\circ}$ and $\tilde{Z}_{1}= \tilde{Z}\cap \tilde{T}_{1}$). We let $\omega_{\circ}= \omega_{\pi}|_{Z_{\circ}}$ and $\omega_{1}= \omega_{\pi}|_{Z_{1}}$.

\subsection{Center of $G$ is connected}

\subsection*{} In this section, we show the existence of an element $s\in T$ satisfying the conditions of Prasad's theorem (Theorem~\ref{Prasad's theorem}) and use it to compute the sign $\varepsilon{(\pi)}$ when the center of $G$ is connected.

\begin{lemma} Let $s\in T$ be such that $\alpha(s)=-1$ for all simple roots $\alpha\in \Delta$. The following are true.
\begin{enumerate}
\item[(i)] For $u\in U$, we have $\psi(sus^{-1})=\psi^{-1}(u)$.
\item[(ii)] The element $s^{2}$ belongs to the center of $G$.
\item[(iii)] The element $s$ normalizes the compact open subgroups $K_{m}$ and inner conjugation by $s$ takes the characters $\psi_{m}$ to its inverse.
\end{enumerate}
\end{lemma}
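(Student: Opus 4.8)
The plan is to prove the three assertions essentially separately, using only the structure theory of $G$, $T$, $B$, $U$ together with the shape of the pairs $(K_{m},\psi_{m})$ that Rodier's theorem (Theorem~\ref{Rodier}) produces. For (i) I would first reduce to the simple root subgroups: being a character of $U$, $\psi$ kills $[U,U]$, hence factors through $U/[U,U]$, and for the unipotent radical of a Borel in a split group the product map $\prod_{\alpha\in\Delta}U_{\alpha}\to U/[U,U]$ is an isomorphism, so $\psi$ is recorded by a family of additive characters $\psi_{\alpha}\colon F\to\mathbb{C}^{\times}$. Since $s\in T$ normalizes $U$ and $[U,U]$ and acts on the $\alpha$-component of $U/[U,U]$ through $\alpha(s)=-1$, writing the image of $u$ in $U/[U,U]$ in coordinates as $(x_{\alpha})_{\alpha\in\Delta}$ we have $\psi(u)=\prod_{\alpha}\psi_{\alpha}(x_{\alpha})$, while the image of $sus^{-1}$ is $(-x_{\alpha})_{\alpha}$, so
\[
\psi(sus^{-1})=\prod_{\alpha\in\Delta}\psi_{\alpha}(-x_{\alpha})=\prod_{\alpha\in\Delta}\psi_{\alpha}(x_{\alpha})^{-1}=\psi(u)^{-1},
\]
the middle step because an additive character satisfies $\psi_{\alpha}(-x)=\psi_{\alpha}(x)^{-1}$.

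For (ii) I would invoke that the center of a connected reductive group is $\bigcap_{\beta\in\Phi}\ker\beta$ inside $T$, so it suffices to check $\beta(s^{2})=1$ for every root $\beta$; writing $\beta=\sum_{\alpha\in\Delta}n_{\alpha}\alpha$ with $n_{\alpha}\in\mathbb{Z}$ gives $\beta(s^{2})=\prod_{\alpha}\alpha(s)^{2n_{\alpha}}=\prod_{\alpha}(-1)^{2n_{\alpha}}=1$, whence $s^{2}\in Z(G)(F)$, which is central in the $p$-adic group $G$. The same computation applied to $s$ itself gives $\beta(s)=\pm1\in\mathfrak{O}^{\times}$ for \emph{every} root $\beta$, a fact I will reuse below.

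For (iii) the inputs about $(K_{m},\psi_{m})$ I plan to use, and which come out of Rodier's construction, are: each $K_{m}$ has an Iwahori-type factorization $K_{m}=(K_{m}\cap\bar{U})(K_{m}\cap T)(K_{m}\cap U)$, and $\psi_{m}$ is trivial on $K_{m}\cap\bar{U}$ and on $K_{m}\cap T$ while restricting to $\psi$ on $K_{m}\cap U$. Because $\beta(s)\in\mathfrak{O}^{\times}$ for all roots $\beta$, the element $s$ normalizes the Iwahori subgroup and hence every $K_{m}$ built from it; being in $T$ it also normalizes $\bar{U}$, $T$ and $U$, so it normalizes each of the three factors of $K_{m}$. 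Then for $k=\bar{u}\,t\,u$ in this factorization, using $sts^{-1}=t$ (since $T$ is commutative), triviality of $\psi_{m}$ on the outer factors, and part (i),
\[
\psi_{m}(sks^{-1})=\psi_{m}(s\bar{u}s^{-1})\,\psi_{m}(t)\,\psi_{m}(sus^{-1})=\psi(sus^{-1})=\psi(u)^{-1}=\psi_{m}(k)^{-1},
\]
so inner conjugation by $s$ carries $\psi_{m}$ to $\psi_{m}^{-1}$.

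I expect (i) and (ii) to be routine once the structure theory is in place; the step needing most care is (iii), where one has to make the construction of $K_{m}$ and $\psi_{m}$ explicit enough (the Iwahori factorization, and the triviality of $\psi_{m}$ off the $U$-part) to run the computation, and must verify that $s$ normalizes \emph{every} $K_{m}$ — which is precisely why one needs $\beta(s)=\pm1$ for all roots $\beta$, not merely for the simple ones.
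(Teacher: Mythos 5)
Your proof is correct and follows essentially the same route as the paper: reduce (i) to the simple root subgroups (the paper writes the reduction to $U_{\alpha}$ without flagging that $\psi$ kills the non-simple root groups, which your $U/[U,U]$ step makes explicit), prove (ii) via $Z(G)=\bigcap\ker\alpha$ inside $T$, and prove (iii) by using the Iwahori-type factorization $K_{m}=K_{m}^{-}K_{m}^{\circ}K_{m}^{+}$ with $\psi_{m}$ trivial off the $U$-factor and equal to $\psi$ there. Your observation that $\beta(s)=\pm1\in\mathfrak{O}^{\times}$ for every root $\beta$ is precisely the reason $s$ normalizes the filtered pieces $U_{m},\bar{U}_{m}$ that the paper invokes, so the arguments match.
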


\begin{proof} Since $U$ is generated by $U_{\alpha},\, \alpha\in \Phi$, it is enough to show that $\psi(sus^{-1})=\psi^{-1}(u)$ for $u\in U_{\alpha}$. For $u=x_{\alpha}(\lambda)\in U_{\alpha}$ we have,
\begin{align*}
\psi(sus^{-1}) &= \psi(sx_{\alpha}(\lambda)s^{-1})\\
&= \psi(x_{\alpha}(\alpha(s)\lambda))\\
&= \psi(x_{\alpha}(-\lambda))\\
&= \psi^{-1}(x_{\alpha}(\lambda))\\
&= \psi^{-1}(u).
\end{align*}

For (ii), since $\alpha(s)=-1$ it is clear that $\alpha(s^{2})=1$ for all simple roots $\alpha \in \Delta$. Since $\displaystyle Z(G)=\bigcap_{\alpha\in \Delta} \Ker(\alpha)$, the result follows.\\

For (iii), It is easy to see that $s$ normalizes $U_{m}$, $\bar{U}_{m}$. It follows that $s$ normalizes $G_{m}$ and hence $K_{m}$. Let $k=k^{-}k^{\circ}k^{+}\in K_{m}$ (since $K_{m}=K^{-}_{m}K^{\circ}_{m}K^{+}_{m}$). We have
\begin{align*}\psi_{m}(sks^{-1}) &=\psi_{m}(sk^{-}s^{-1}sk^{\circ}s^{-1}sk^{+}s^{-1})\\
&= \psi(sk^{+}s^{-1})\\
&=\psi^{-1}(k^{+})\\
&= \psi_{m}^{-1}(k).
\end{align*}
\end{proof}

\begin{theorem}\label{sign in connected case}
Let $(\pi,W)$ be an irreducible smooth self-dual generic representation of $G$ with non-zero vectors fixed under an Iwahori subgroup $I$ in $G$. Suppose there exists an element $s\in
T_{\circ}$ such that $\alpha(s)=-1$ for all simple roots $\alpha$. Then $\varepsilon{(\pi)}=1$.
\end{theorem}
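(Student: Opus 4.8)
The plan is to assemble four ingredients already on the table: Rodier's theorem, the lemma just proved, Prasad's criterion, and the Borel--Casselman classification. First I would apply Theorem~\ref{Rodier}: since $\pi$ is generic there is a compact open subgroup $K_m$ and a character $\psi_m$ of $K_m$ with $\dim_{\mathbb{C}}\Hom_{K_m}(\pi,\psi_m)=1$, i.e. $\psi_m$ occurs in $\pi|_{K_m}$ with multiplicity one. Next, the hypothesis supplies an element $s\in T_\circ$ with $\alpha(s)=-1$ for every simple root $\alpha\in\Delta$, and the preceding lemma shows that this $s$ normalizes each $K_m$, that inner conjugation by $s$ sends $\psi_m$ to $\psi_m^{-1}$, and that $s^2\in Z(G)$. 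Hence the data $(\pi,K_m,\psi_m,s)$ satisfy every hypothesis of Prasad's Theorem (Theorem~\ref{Prasad's theorem}), which therefore gives: $\varepsilon(\pi)=1$ if and only if $s^2$ operates by the scalar $1$ on $\pi$, that is, if and only if $\omega_\pi(s^2)=1$.

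It then remains to check $\omega_\pi(s^2)=1$. Since $s\in T_\circ$ and $T_\circ$ is a subgroup, $s^2\in T_\circ$; together with $s^2\in Z(G)$ from the lemma this gives $s^2\in Z_\circ=Z\cap T_\circ$. So it suffices to prove that the central character is trivial on $Z_\circ$, i.e. that $\omega_\circ=1$. Here I would invoke Theorem~\ref{Borel-Casselman}: because $\pi$ has non-zero Iwahori-fixed vectors, $\pi$ embeds as a subrepresentation of $\Ind_B^G\mu$ for some unramified character $\mu$ of $T$. A short computation in the induced model shows that the centre $Z\subset T$ acts on $\Ind_B^G\mu$ through $\mu|_Z$ (the modulus character of $B$ being trivial on $Z$), so $\omega_\pi=\mu|_Z$. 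Since $\mu$ is unramified, $\mu|_{T_\circ}=1$, and restricting to $Z_\circ\subset T_\circ$ gives $\omega_\circ=\omega_\pi|_{Z_\circ}=1$. In particular $\omega_\pi(s^2)=1$, and Prasad's criterion yields $\varepsilon(\pi)=1$.

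Beyond this bookkeeping the only genuine content is the verification that Prasad's hypotheses really hold — which is exactly the role of the preceding lemma — and the observation that the central character of an unramified principal series is itself unramified. The point to watch is that the supplied $s$ lies in $T_\circ$ and not merely in $T$: this is precisely what places $s^2$ in $Z_\circ$ and lets the unramifiedness of $\omega_\pi$ finish the argument. (When no such $s\in T_\circ$ exists — which is the case exactly when the centre of $G$ is disconnected — one first enlarges $G$ to a split connected reductive $F$-group $\tilde G$ with connected centre, and that situation is treated separately.)
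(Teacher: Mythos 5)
Your argument is correct and follows the paper's strategy: verify Prasad's hypotheses via Rodier and the preceding lemma, then reduce everything to showing $\omega_{\pi}(s^{2})=1$. The paper's last step is slightly more direct --- since $s\in T_{\circ}\subset I$, a nonzero Iwahori-fixed vector $v$ gives $v=\pi(s^{2})v=\omega_{\pi}(s^{2})v$ at once, whereas you route the same fact through Borel--Casselman and the unramifiedness of the central character.
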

\begin{proof}
By Theorem~\ref{Prasad's theorem}, it is enough to show that
$\omega_{\pi}(s^{2})=1$ ($\omega_{\pi}$ is the central
character). Let $v\neq 0 \in \pi^{I}$. We have
$v=\pi(s^{2})v=\omega_{\pi}(s^{2})v$. From this it follows that $\omega_{\pi}(s^{2})=1$.
\end{proof}

\begin{theorem} \label{existence of the elt s}There exists $s\in T_{\circ}$ such that $\alpha(s)=-1$ for all the simple roots $\alpha$.
\end{theorem}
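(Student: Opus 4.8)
The plan is to translate the statement into linear algebra over $\mathbb{F}_2$ and then cash in the standing hypothesis that $Z = Z(G)$ is connected. The first, purely formal, step: since $-1 \in \mathfrak{O}^\times$, the isomorphism $T_\circ \simeq X^\vee \otimes \mathfrak{O}^\times$ assigns to each cocharacter $\lambda \in X^\vee$ an element $s_\lambda := \lambda(-1) \in T_\circ$, and for a simple root $\alpha \in \Delta$ one has $\alpha(s_\lambda) = (-1)^{\langle \alpha, \lambda \rangle}$. So it suffices to produce $\lambda \in X^\vee$ with $\langle \alpha, \lambda \rangle$ odd for every $\alpha \in \Delta$; writing $\bar{\alpha}, \bar{\lambda}$ for reductions modulo $2$, this is the same as finding $\bar{\lambda} \in X^\vee / 2X^\vee$ with $\langle \bar{\alpha}, \bar{\lambda} \rangle = 1$ in $\mathbb{F}_2$ for all $\alpha \in \Delta$.

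Next I would note that the perfect $\mathbb{Z}$-bilinear pairing $X \times X^\vee \to \mathbb{Z}$ induces a perfect $\mathbb{F}_2$-bilinear pairing $X/2X \times X^\vee/2X^\vee \to \mathbb{F}_2$; this is immediate from $X^\vee = \Hom(X, \mathbb{Z})$ and the fact that $X$ is free of finite rank. Consequently the $\mathbb{F}_2$-linear map $X^\vee / 2X^\vee \to \mathbb{F}_2^{\Delta}$, $\bar{\lambda} \mapsto (\langle \bar{\alpha}, \bar{\lambda} \rangle)_{\alpha \in \Delta}$, is surjective if and only if the reductions $\{\bar{\alpha} : \alpha \in \Delta\}$ are linearly independent in $X/2X$; and if this map is surjective, then in particular the all-ones vector lies in its image, which is exactly what we want.

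It therefore remains to show the simple roots are linearly independent modulo $2$ in $X/2X$, and here connectedness of $Z$ enters. One has $X^*(Z) \cong X / \mathbb{Z}\Phi$, so $Z$ connected forces $X/\mathbb{Z}\Phi$ to be torsion-free, hence free, hence the inclusion $\mathbb{Z}\Phi \hookrightarrow X$ splits and $\mathbb{Z}\Phi$ is a direct summand of $X$. Since $\Delta$ is a $\mathbb{Z}$-basis of the root lattice $\mathbb{Z}\Phi$, the reductions $\bar{\alpha}$ form an $\mathbb{F}_2$-basis of $\mathbb{Z}\Phi / 2\mathbb{Z}\Phi$, and because $\mathbb{Z}\Phi/2\mathbb{Z}\Phi$ injects into $X/2X$ (being a direct summand) these reductions stay linearly independent in $X/2X$. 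Assembling the three steps produces $\lambda \in X^\vee$ with all $\langle \alpha, \lambda \rangle$ odd, and then $s = \lambda(-1) \in T_\circ$ satisfies $\alpha(s) = -1$ for every $\alpha \in \Delta$.

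I expect the only real content to be the last paragraph: recognizing that ``$Z(G)$ connected'' is precisely what makes the root lattice a direct summand of $X^*(T)$, and hence keeps the simple roots independent over $\mathbb{F}_2$. (Without the hypothesis the statement genuinely fails: for $\SL_2$ the simple root is twice a generator of $X^*(T)$, so no such $s$ exists, which is why the non-connected-center case is handled separately later.) Everything else is formal bookkeeping with the map $\lambda \mapsto \lambda(-1)$ and the reduction of a perfect pairing modulo $2$.
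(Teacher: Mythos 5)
Your argument is correct and rests on exactly the same key insight as the paper's: connectedness of $Z$ makes $X/\mathbb{Z}\Phi$ torsion free, so $\mathbb{Z}\Phi = \mathbb{Z}\Delta$ is a direct summand of $X$, and one can therefore extend a cocharacter with prescribed odd pairings on $\Delta$. The only difference is packaging — you reduce modulo $2$ and phrase the extension step as an $\mathbb{F}_2$-linear-independence statement via the perfect pairing, whereas the paper works directly with a splitting $X = \mathbb{Z}\Delta \oplus L$ and extends an integral homomorphism $h$ with $h(\alpha_i)$ odd; the two are interchangeable.
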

\begin{proof}
We know that $X^{\vee}\otimes F^{\times}\simeq T$, via $y\otimes \lambda \mapsto y(\lambda)$. Since $F^{\times}\simeq \mathfrak{O}^{\times} \rtimes \mathbb{Z}$, we see that $T\simeq X^{\vee}\otimes \mathfrak{O}^{\times}\oplus X^{\vee}\otimes \mathbb{Z}$. Now define $\displaystyle f: T\longrightarrow \prod_{\alpha_{i}\in \Delta}F^{\times}$ by \begin{displaymath}
f(y\otimes\lambda)= (\lambda^{\langle \alpha_{1},y\rangle}, \ldots, \lambda^{\langle \alpha_{k},y\rangle}).
\end{displaymath}
We will show that there exists $y\in X^{\vee}$ such that $\langle \alpha_{i}, y\rangle$ is an odd integer for every simple root $\alpha_{i}, i=1,\ldots, k$. Since $Z$ is connected $X/\mathbb{Z}\Phi$ is torsion free. Since $\Delta$ spans $\Phi$ we see that $\mathbb{Z}\Phi= \mathbb{Z}\Delta$. Consider the exact sequence
\begin{equation}\label{eq:2}
0 \longrightarrow \mathbb{Z}\Delta \longrightarrow X \longrightarrow X/\mathbb{Z}\Delta \longrightarrow 0.
\end{equation}
Since \eqref{eq:2} is an exact sequence of finitely generated free abelian groups, it is split. i.e., $X= \mathbb{Z}\Delta \oplus L$, where $L\simeq X/\mathbb{Z}\Delta$. Let $g\in \Hom_{\mathbb{Z}}(\mathbb{Z}\Delta, \mathbb{Z})$. Clearly, $g$ extends to an element of $\Hom_{\mathbb{Z}}(X,\mathbb{Z})$ (say trivial on $L$). Since $\Hom_{\mathbb{Z}}(X,\mathbb{Z})\simeq X^{\vee}$, there exists $y\in X^{\vee}$ such that $g=\langle -, y\rangle$. We now choose $h\in \Hom_{\mathbb{Z}}(\mathbb{Z}\Delta, \mathbb{Z})$ such that $h(\alpha_{i})$ is odd for every $\alpha_{i}, i= 1, 2, \ldots, k$. Then $h(\alpha_{i})= \langle \alpha_{i}, y\rangle$ is an odd integer. Now consider the element $y \otimes -1 \in X^{\vee}\otimes \mathfrak{O}^{\times}$. Let $s=y(-1)$. Then $s\in T_{\circ}$ clearly acts by $-1$ on all the simple root subgroups $U_{\alpha}$ of $U$, i.e.,
\begin{align*}
sx_{\alpha_{i}}(\mu)s^{-1} &= x_{\alpha_{i}}(\alpha_{i}(s)\mu)\\
&= x_{\alpha_{i}}(\alpha_{i}(y(-1))\mu)\\
&= x_{\alpha_{i}}((-1)^{\langle \alpha_{i},y \rangle}\mu)\\
&= x_{\alpha_{i}}(-\mu).
\end{align*}
\end{proof}


\subsection{Center of $G$ is not connected}

\subsubsection{Construction of $(\tilde{G}, \tilde{T})$} Let $q:X\rightarrow X/\mathbb{Z}\Phi$ be the canonical quotient map.
Choose a free abelian group $L$ of finite rank such that there exists a surjective map $p:L\rightarrow X/\mathbb{Z}\Phi$. Let $p_{1}$ and $p_{2}$ be the projection maps from $X\times L$ onto $X$ and $L$ respectively. Let
\begin{displaymath}
\tilde{X}= \{(x,l)\in X\times L\mid \,q(x)= p(l)\}.
\end{displaymath}
Clearly $\tilde{X}$ is a free abelian group of finite rank. 
Let $\tilde{\Phi}= \{(\alpha,0) \mid  \alpha \in \Phi\}$. The map $\alpha\mapsto (\alpha,0)$ induces an injection $\mathbb{Z}\Phi\hookrightarrow \tilde{X}$ and we identify its image under the map with $\mathbb{Z}\tilde{\Phi}$. Let $\tilde{X}^{\vee}= \Hom_{\mathbb{Z}}(\tilde{X}, \mathbb{Z})$. Given $\tilde{\alpha}\in \tilde{\Phi}$ we want to describe $\tilde{\alpha}^{\vee}\in \tilde{\Phi}^{\vee}\subset \tilde{X}^{\vee}$. Now $\tilde{\alpha}=(\alpha,0)$ for some $\alpha\in \Phi$. For this $\alpha$, there exists $\alpha^{\vee}\in \Phi^{\vee}\subset X^{\vee}\simeq \Hom_{\mathbb{Z}}(X,\mathbb{Z})$. Let $\tilde{x}=(x,0)\in \tilde{X}$. Define
 $\tilde{\alpha}^{\vee}(\tilde{x})= \tilde{\alpha}^{\vee}((x,0))= \alpha^{\vee}(p_{1}(\tilde{x}))$. Clearly $\tilde{\alpha}^{\vee}\in \Hom_{\mathbb{Z}}(\tilde{X}, \mathbb{Z})$. It is easy to see that $(\tilde{X}, \tilde{\Phi}, \tilde{X}^{\vee}, \tilde{\Phi}^{\vee})$ is a root datum. By the classification theorem for split groups, the existence of $(\tilde{G}, \tilde{T})$ follows. Since $\tilde{X}/\mathbb{Z}\tilde{\Phi}\hookrightarrow L$, it follows that it is torsion free and the center $\tilde{Z}$ of $\tilde{G}$ is connected.

\subsubsection{Extension of the central character}
In this section, we show that there exists a character $\nu$ of $\tilde{Z}$ which extends the central character $\omega_{\pi}$ and satisfies $\nu^{2}=1$.

\begin{lemma} There exists an unramified character $\mu: T \rightarrow \mathbb{C}^{\times}$ such that $\mu |_{Z}= \omega_{\pi}$.
\end{lemma}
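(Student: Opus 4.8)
The plan is to produce an unramified character $\mu$ of $T$ whose restriction to $Z$ agrees with $\omega_\pi$, exploiting the fact that $\pi$ has Iwahori-fixed vectors and the structural decomposition $T = T_\circ \times T_1$ already set up in this section. First I would use Theorem~\ref{Borel-Casselman}: since $\pi$ has non-zero $I$-fixed vectors, $\pi$ embeds in $\Ind_B^G \chi$ for some unramified character $\chi$ of $T$. The central character of any irreducible subquotient of $\Ind_B^G\chi$ is $\chi|_Z$ (up to the modulus character, which is trivial on the center), so $\omega_\pi = \chi|_Z$ for this $\chi$. That already gives a candidate, and the remaining task is simply to verify the candidate is unramified — which it is by construction — so in fact the lemma follows immediately once one observes that $\omega_\pi$ extends to an \emph{unramified} character of $T$.

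Since I should not lean on more machinery than necessary, let me also describe the self-contained route in case one does not wish to invoke the precise form of central characters of principal series. Restrict $\omega_\pi$ to $Z_\circ = Z \cap T_\circ$ and to $Z_1 = Z \cap T_1$. The key point is that $\omega_\pi|_{Z_\circ}$ is trivial: a non-zero $I$-fixed vector $v \in \pi^I$ is fixed by $T_\circ \subset I$, hence $\omega_\pi(z)v = \pi(z)v = v$ for all $z \in Z_\circ$, so $\omega_\circ = \omega_\pi|_{Z_\circ} = 1$. Now $Z_1 = Z \cap T_1$ is a subgroup of the free abelian group $T_1 \simeq X^\vee \otimes \mathbb{Z}$, hence free, so a character of $Z_1$ always extends to a character of $T_1$ (injectivity of $\mathbb{C}^\times$ as a divisible abelian group, or simply split the inclusion after tensoring appropriately). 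Extend $\omega_\pi|_{Z_1}$ to a character $\mu_1$ of $T_1$, and let $\mu = 1 \boxtimes \mu_1$ on $T = T_\circ \times T_1$, i.e.\ $\mu$ is trivial on $T_\circ$ and equals $\mu_1$ on $T_1$. Then $\mu$ is unramified since $\mu|_{T_\circ} = 1$, and $\mu|_Z = \omega_\pi$ because $Z = Z_\circ \times Z_1$ (this decomposition being inherited from $T = T_\circ \times T_1$) and $\mu$ agrees with $\omega_\pi$ on each factor.

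The one point requiring a little care — and the place I would be most careful writing the details — is the claim $Z = Z_\circ \times Z_1$, i.e.\ that the splitting $T = T_\circ \times T_1$ restricts to a splitting of $Z$. This holds because $Z_\circ$ is exactly the maximal compact subgroup of $Z$ and $Z_1$ maps isomorphically onto $Z/Z_\circ$, so $Z \cong Z_\circ \times Z_1$; concretely, $Z = \bigcap_{\alpha\in\Phi}\ker\alpha$ inside $T$, each $\alpha$ is a homomorphism $T\to F^\times$ respecting the decomposition $F^\times = \mathfrak{O}^\times \times \varpi^{\mathbb{Z}}$, so the intersection decomposes accordingly. With that in hand the construction above gives the desired $\mu$, completing the proof of the lemma.
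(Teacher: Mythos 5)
Your first paragraph reproduces the paper's own argument: apply Borel--Casselman (Theorem~\ref{Borel-Casselman}) to embed $\pi\hookrightarrow\Ind_B^G\mu$ with $\mu$ unramified, then evaluate the central action on a function in the induced model to see that $\omega_\pi=\mu|_Z$, the modulus character contributing nothing on $Z$. Your second paragraph offers a genuinely different and more elementary route that bypasses Borel--Casselman and the principal series altogether: you note that $\omega_\pi|_{Z_\circ}=1$ because $Z_\circ\subset T_\circ\subset I$ and $\pi^I\neq0$; you extend $\omega_\pi|_{Z_1}$ from the free abelian group $Z_1$ to $T_1$ using divisibility of $\mathbb{C}^\times$; and you splice the two along $T=T_\circ\times T_1$. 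The delicate point, that $Z=Z_\circ\times Z_1$, you also treat correctly: every root $\alpha\colon T\to F^\times$ sends $T_\circ$ into $\mathfrak{O}^\times$ and $T_1$ into $\varpi^{\mathbb{Z}}$, so each $\ker\alpha$ respects the decomposition $T=T_\circ\times T_1$, and hence so does $Z=\bigcap_{\alpha\in\Phi}\ker\alpha$. The elementary route isolates the two mechanisms actually at work (Iwahori invariance forcing triviality on $Z_\circ$, and free-abelianness of $T_1$ allowing the extension on $Z_1$) and relies on no structure theory of unramified principal series; the paper's route is shorter to state given that Theorem~\ref{Borel-Casselman} is already in play elsewhere in this section.
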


\begin{proof}
Since $\pi$ has Iwahori fixed vectors, there exists an unramified character $\mu$ of $T$ such that $\pi\hookrightarrow \Ind_{B}^{G}\mu$. Let $(\rho, E)$ be an irreducible sub-representation of $\Ind_{B}^{G}\mu$ that is isomorphic to $\pi$. Let $x\in Z, f\in E, g\in G$. Clearly,
\begin{equation}\label{eqn1}
(\rho(x)f)(g)= f(gx)= f(xg)= \mu(x)f(g)
\end{equation}
On the other hand,
\begin{equation}\label{eqn2}
(\rho(x)f)(g)= \omega_{\rho}(x)f(g)
\end{equation}
From (~\ref{eqn1}) and (~\ref{eqn2}) it follows that $\omega_{\rho}(x)= \mu(x)= \omega_{\pi}(x)$.
\end{proof}

Since $\mu$ is unramified it follows that $\mu|_{Z}=\omega_{1}$ (since $\mu|_{Z_{\circ}}=1$). If we can extend $\omega_{1}$ to a self-dual character $\tilde{\omega}_{1}$ of $\tilde{Z}_{1}$ then we get a self-dual character $\nu$ of $\tilde{Z}$ extending the central character $\omega_{\pi}$. We record the result in a lemma below.

\begin{lemma}\label{defining nu} Suppose that $\tilde{\omega}_{1}$ is an extension of\, $\omega_{1}$ to $\tilde{Z}_{1}$. Then there exists $\nu: \tilde{Z}\rightarrow \{\pm 1\}$ such that $\nu$ extends $\omega_{\pi}$.
\end{lemma}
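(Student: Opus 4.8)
The plan is to write down $\nu$ explicitly, using the product decompositions of $Z$ and $\tilde{Z}$. The first thing I would record is that $Z=Z_{\circ}\times Z_{1}$: if $t=t_{\circ}t_{1}$ with $t_{\circ}\in T_{\circ}$ and $t_{1}\in T_{1}$, then for every root $\alpha$ the value $\alpha(t_{\circ})$ lies in $\mathfrak{O}^{\times}$ while $\alpha(t_{1})$ is an integral power of $\varpi$, so $\alpha(t)=1$ forces $\alpha(t_{1})=1$ (compare valuations) and then $\alpha(t_{\circ})=1$; since $Z=\bigcap_{\alpha\in\Phi}\Ker\alpha$ this gives $Z=(Z\cap T_{\circ})(Z\cap T_{1})=Z_{\circ}\times Z_{1}$, and the identical argument over $\tilde{G}$ gives $\tilde{Z}=\tilde{Z}_{\circ}\times\tilde{Z}_{1}$. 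I would also note that the closed immersion $G\hookrightarrow\tilde{G}$ furnished by the construction of $(\tilde{G},\tilde{T})$ restricts to $Z\hookrightarrow\tilde{Z}$ (the surjection $p_{1}\colon\tilde{X}\to X$ of character lattices is compatible with the restriction maps onto $\mathbb{Z}\Phi$ and $\mathbb{Z}\tilde{\Phi}$), and that, having chosen the splittings defining $T_{1}$ and $\tilde{T}_{1}$ with respect to a common uniformizer $\varpi$ so that $T_{1}\subseteq\tilde{T}_{1}$, one has $Z_{\circ}\subseteq\tilde{Z}_{\circ}$ and $Z_{1}\subseteq\tilde{Z}_{1}$; alternatively, $\tilde{Z}_{\circ}=\tilde{Z}\cap\tilde{T}(\mathfrak{O})$ is simply the maximal compact subgroup of $\tilde{Z}$, which visibly contains the compact group $Z_{\circ}$.

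Next I would pin down $\omega_{\pi}$ on $Z$. Since $\pi\simeq\pi^{\vee}$, the central character satisfies $\omega_{\pi}^{2}=1$, so $\omega_{\pi}$ is valued in $\{\pm1\}$. By the preceding lemma $\omega_{\pi}=\mu|_{Z}$ for an unramified character $\mu$ of $T$; as $\mu|_{T_{\circ}}=1$ and $Z_{\circ}\subseteq T_{\circ}$, this gives $\omega_{\circ}=\omega_{\pi}|_{Z_{\circ}}=1$. In view of $Z=Z_{\circ}\times Z_{1}$ this means precisely that $\omega_{\pi}(z_{\circ}z_{1})=\omega_{1}(z_{1})$ for $z_{\circ}\in Z_{\circ}$ and $z_{1}\in Z_{1}$.

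Finally I would define $\nu\colon\tilde{Z}=\tilde{Z}_{\circ}\times\tilde{Z}_{1}\to\{\pm1\}$ by $\nu(z_{\circ}z_{1})=\tilde{\omega}_{1}(z_{1})$ for $z_{\circ}\in\tilde{Z}_{\circ}$ and $z_{1}\in\tilde{Z}_{1}$, i.e.\ the trivial character on $\tilde{Z}_{\circ}$ times $\tilde{\omega}_{1}$ on $\tilde{Z}_{1}$. This is a homomorphism, and it is valued in $\{\pm1\}$ because $\tilde{\omega}_{1}$ is taken to be self-dual (as in the discussion preceding the statement). To check $\nu|_{Z}=\omega_{\pi}$, take $z=z_{\circ}z_{1}\in Z$ with $z_{\circ}\in Z_{\circ}\subseteq\tilde{Z}_{\circ}$ and $z_{1}\in Z_{1}\subseteq\tilde{Z}_{1}$; then $\nu(z)=\tilde{\omega}_{1}(z_{1})=\omega_{1}(z_{1})=\omega_{\pi}(z)$, using that $\tilde{\omega}_{1}$ extends $\omega_{1}$ together with the identity $\omega_{\pi}(z_{\circ}z_{1})=\omega_{1}(z_{1})$ from the previous paragraph.

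I do not expect a serious obstacle in this lemma: it is essentially bookkeeping, the only real input being that the ``compact part'' $Z_{\circ}$ of the center of $G$ carries only the trivial restriction of $\omega_{\pi}$ (because $\pi$ has Iwahori-fixed vectors, so $\omega_{\pi}$ extends to an unramified character of $T$), together with the hypothesised self-dual extension $\tilde{\omega}_{1}$ of $\omega_{1}$. The one point needing a little care is the compatibility of the two product decompositions under $Z\hookrightarrow\tilde{Z}$, namely $Z_{\circ}\subseteq\tilde{Z}_{\circ}$ and $Z_{1}\subseteq\tilde{Z}_{1}$, which forces the choices of uniformizer and of splittings in the setup to be made coherently. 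The genuinely substantive statement --- that a self-dual $\tilde{\omega}_{1}$ exists at all --- is not part of this lemma and is handled separately.
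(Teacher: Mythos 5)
Your proof is correct and takes essentially the same approach as the paper: define $\nu$ on $\tilde{Z}=\tilde{Z}_\circ\times\tilde{Z}_1$ to be trivial on $\tilde{Z}_\circ$ and equal to $\tilde{\omega}_1$ on $\tilde{Z}_1$, and check $\nu|_Z=\omega_\pi$ via $\omega_\circ=1$. You spell out more bookkeeping than the paper (the decomposition $Z=Z_\circ\times Z_1$, the inclusions $Z_\circ\subseteq\tilde{Z}_\circ$ and $Z_1\subseteq\tilde{Z}_1$, and why $\nu$ is $\{\pm1\}$-valued), but the construction and the key observation are identical.
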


\begin{proof} For $\tilde{z}=\tilde{z}_{0}\tilde{z}_{1}\in \tilde{Z}$, $\tilde{z}_{0}\in \tilde{Z}_{\circ}, \tilde{z}_{1}\in \tilde{Z}_{1}$ define $\nu(\tilde{z})= \tilde{\omega}_{1}(\tilde{z}_{1})$. Clearly $\nu$ is a well-defined character of $\tilde{Z}$ and $\nu|_{Z}=\tilde{\omega}_{1}|_{Z}= \omega_{1}=\mu|_{Z}=\omega_{\pi}$.
\end{proof}

From Lemma~\ref{defining nu}, it follows that we can extend the central character $\omega_{\pi}$ to a self-dual character $\nu$ of $\tilde{Z}$ if there exists an extension $\tilde{\omega}_{1}$ of $\omega_{1}$. Consider the map $\omega_{1}': Z_{1}/Z_{1}^{2} \rightarrow \{\pm 1\}$ defined by $\omega_{1}'(aZ_{1}^{2})=\omega_{1}(a)$. Since $Z_{1}/Z_{1}^{2}$ is an elementary abelian $2$-group, $\omega_{1}'$ can be thought of as a $\mathbb{Z}_{2}$-linear map. If the natural map from $Z_{1}/Z_{1}^{2}$ to $\tilde{Z}_{1}/\tilde{Z}_{1}^{2}$ is an embedding, then we can extend the $\mathbb{Z}_{2}$-linear map $\omega_{1}'$ to a $\mathbb{Z}_{2}$-linear map $\tilde{\omega}_{1}'$ of $\tilde{Z}_{1}/\tilde{Z}_{1}^{2}$. Now defining $\tilde{\omega}_{1}(a)=\tilde{\omega}_{1}'(a\tilde{Z}_{2})$ gives us an extension of $\omega_{1}$. The natural map is an embedding precisely when $\tilde{Z}^{2}_{1}\cap Z_{1}\subset Z^{2}_{1}$. We record the result in the following lemma.

\begin{lemma} The natural map $Z_{1}/Z_{1}^{2}$ to $\tilde{Z}_{1}/\tilde{Z}_{1}^{2}$ is an embedding and $\omega_{1}$ extends to a character $\tilde{\omega}_{1}$ of $\tilde{Z}_{1}$.
\end{lemma}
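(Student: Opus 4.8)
The plan is to convert the assertion about the finite abelian $2$-groups $Z_{1}/Z_{1}^{2}$ and $\tilde{Z}_{1}/\tilde{Z}_{1}^{2}$ into a statement about the lattices $X$ and $\tilde{X}$, where it follows from elementary facts about free abelian groups. First I would identify the groups in play with cocharacter lattices: evaluation at $\varpi$ gives $T_{1}\simeq X^{\vee}$ and $\tilde{T}_{1}\simeq \tilde{X}^{\vee}$, and under these identifications $Z_{1}=Z\cap T_{1}$ corresponds to $\{y\in X^{\vee}\mid \langle \alpha,y\rangle=0\ \text{for all}\ \alpha\in \Phi\}=\Hom_{\mathbb{Z}}(X/\mathbb{Z}\Phi,\mathbb{Z})$, and similarly $\tilde{Z}_{1}$ corresponds to $\Hom_{\mathbb{Z}}(\tilde{X}/\mathbb{Z}\tilde{\Phi},\mathbb{Z})$, with the natural map $Z_{1}\to \tilde{Z}_{1}$ induced by restriction of characters $p_{1}\colon \tilde{X}\to X$. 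As noted just before the lemma, it suffices to show $\tilde{Z}_{1}^{2}\cap Z_{1}\subseteq Z_{1}^{2}$, and this holds as soon as $\tilde{Z}_{1}/Z_{1}$ has no $2$-torsion: if $a=b^{2}$ with $b\in \tilde{Z}_{1}$ and $a\in Z_{1}$, then $b$ maps to a $2$-torsion element of $\tilde{Z}_{1}/Z_{1}$, hence lies in $Z_{1}$, so $a\in Z_{1}^{2}$. Thus it is enough to prove that $\tilde{Z}_{1}/Z_{1}$ is torsion-free.

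The key point is then a computation inside the construction of $\tilde{X}$. The second projection $p_{2}\colon \tilde{X}\to L$ is surjective (every $l\in L$ lifts because $q$ is surjective) with kernel exactly $\mathbb{Z}\tilde{\Phi}$, so it induces an isomorphism $\tilde{X}/\mathbb{Z}\tilde{\Phi}\xrightarrow{\ \sim\ }L$; a short diagram chase shows that under this isomorphism the map $\tilde{X}/\mathbb{Z}\tilde{\Phi}\to X/\mathbb{Z}\Phi$ induced by $p_{1}$ becomes precisely the chosen surjection $p\colon L\to X/\mathbb{Z}\Phi$. Dualizing the exact sequence $0\to \Ker p\to L\xrightarrow{\,p\,}X/\mathbb{Z}\Phi\to 0$ gives $0\to \Hom_{\mathbb{Z}}(X/\mathbb{Z}\Phi,\mathbb{Z})\xrightarrow{\,p^{*}\,}\Hom_{\mathbb{Z}}(L,\mathbb{Z})\to \Hom_{\mathbb{Z}}(\Ker p,\mathbb{Z})$, so that $\operatorname{coker}(p^{*})$ embeds into $\Hom_{\mathbb{Z}}(\Ker p,\mathbb{Z})$. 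Since $\Ker p$ is a subgroup of the finitely generated free abelian group $L$ it is itself free, hence $\Hom_{\mathbb{Z}}(\Ker p,\mathbb{Z})$ is free and therefore $\operatorname{coker}(p^{*})$ is torsion-free. A further diagram chase identifies $p^{*}$ with the natural inclusion $Z_{1}\hookrightarrow \tilde{Z}_{1}$ under the identifications above, so $\tilde{Z}_{1}/Z_{1}$ is torsion-free, proving the first half of the lemma. (The possible disconnectedness of $Z$ is invisible here: it only makes $X/\mathbb{Z}\Phi$ have torsion, which contributes an $\operatorname{Ext}^{1}$ term further along the sequence and does not affect torsion-freeness of the cokernel.)

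For the extension of $\omega_{1}$, first observe that $\pi\simeq \pi^{\vee}$ forces $\omega_{\pi}=\omega_{\pi}^{-1}$, so $\omega_{\pi}^{2}=1$; hence $\omega_{1}=\omega_{\pi}|_{Z_{1}}$ takes values in $\{\pm 1\}$ and is trivial on $Z_{1}^{2}$, so it descends to $\omega_{1}'\colon Z_{1}/Z_{1}^{2}\to \{\pm 1\}$. The groups $Z_{1}/Z_{1}^{2}$ and $\tilde{Z}_{1}/\tilde{Z}_{1}^{2}$ are finite-dimensional $\mathbb{F}_{2}$-vector spaces (as $Z_{1}$ and $\tilde{Z}_{1}$ are finitely generated), and by the first half the natural map between them is injective, hence admits an $\mathbb{F}_{2}$-linear splitting. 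Extending $\omega_{1}'$ along such a splitting to an $\mathbb{F}_{2}$-linear functional $\tilde{\omega}_{1}'\colon \tilde{Z}_{1}/\tilde{Z}_{1}^{2}\to \{\pm 1\}$ and setting $\tilde{\omega}_{1}(a)=\tilde{\omega}_{1}'(a\tilde{Z}_{1}^{2})$ yields a character $\tilde{\omega}_{1}$ of $\tilde{Z}_{1}$, automatically self-dual, with $\tilde{\omega}_{1}|_{Z_{1}}=\omega_{1}$.

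The main obstacle is the middle paragraph: recognizing that the abstractly given natural map $Z_{1}\to \tilde{Z}_{1}$ is the dual of $p$, and that its cokernel is forced to be free. Once the isomorphism $\tilde{X}/\mathbb{Z}\tilde{\Phi}\simeq L$ (identifying the induced map with $p$) is established, torsion-freeness of the cokernel is immediate from left-exactness of $\Hom_{\mathbb{Z}}(-,\mathbb{Z})$ together with freeness of subgroups of $L$; the surrounding bookkeeping — evaluation at $\varpi$, the reduction to a statement about $\tilde{Z}_{1}/Z_{1}$, and the $\mathbb{F}_{2}$-linear-algebra extension of $\omega_{1}$ — is routine.
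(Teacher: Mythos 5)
Your proof is correct, and it takes a genuinely different route from the paper's. The paper works inside the tori themselves: since $p_{1}\colon\tilde{X}\to X$ is onto, $T\hookrightarrow\tilde{T}$ admits a complementary sub-torus $S$, so $\tilde{T}_{1}=T_{1}\times S_{1}$; writing $\tilde{z}_{1}=t_{1}s_{1}$ and noting $z_{1}=\tilde{z}_{1}^{2}\in T_{1}$ forces $s_{1}^{2}=1$, and torsion-freeness of $S_{1}\subset\tilde{T}_{1}$ gives $s_{1}=1$, hence $\tilde{z}_{1}\in T_{1}\cap\tilde{Z}_{1}=Z_{1}$. You instead translate the entire statement into the root-datum picture: after identifying $Z_{1}\simeq\Hom_{\mathbb{Z}}(X/\mathbb{Z}\Phi,\mathbb{Z})$, $\tilde{Z}_{1}\simeq\Hom_{\mathbb{Z}}(\tilde{X}/\mathbb{Z}\tilde{\Phi},\mathbb{Z})\simeq\Hom_{\mathbb{Z}}(L,\mathbb{Z})$ (the last via $p_{2}$), and recognizing the inclusion $Z_{1}\hookrightarrow\tilde{Z}_{1}$ as $p^{*}=\Hom(p,\mathbb{Z})$, you get $\operatorname{coker}(p^{*})\hookrightarrow\Hom_{\mathbb{Z}}(\Ker p,\mathbb{Z})$ from left-exactness, hence $\tilde{Z}_{1}/Z_{1}$ is torsion-free — which is stronger than the stated claim and cleanly implies the injectivity on mod-$2$ quotients. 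The two arguments ultimately rest on the same fact (surjectivity of $p_{1}$, equivalently of $p$, controls the inclusion of tori), but yours makes that dependence explicit and self-contained, whereas the paper's assertion that $\tilde{T}=T\times S$ is stated without justification; on the other hand, the paper's concrete torus computation is shorter once that splitting is granted. Your extension step (descending $\omega_{1}$ to an $\mathbb{F}_{2}$-linear functional on $Z_{1}/Z_{1}^{2}$, extending along the injection, and pulling back) coincides with the paper's, and your explicit observation that $\pi\simeq\pi^{\vee}$ forces $\omega_{\pi}^{2}=1$ — so that $\omega_{1}$ really does factor through $Z_{1}/Z_{1}^{2}$ with values in $\{\pm 1\}$ — is a point the paper leaves implicit.
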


\begin{proof} It is enough to show that $\tilde{Z}^{2}_{1}\cap Z_{1}\subset Z^{2}_{1}$. Consider $z_{1}\in \tilde{Z}^{2}_{1}\cap Z_{1}$. Clearly $z_{1}=\tilde{z}_{1}^{2}$ for some $\tilde{z}_{1}\in \tilde{Z}_{1}$. It is enough to show that $\tilde{z}_{1}\in T$ (since $\tilde{z}_{1}\in T$ implies $\tilde{z}_{1}\in \tilde{Z}_{1}\cap T= Z_{1}$ and $\tilde{z}^{2}_{1}\in Z_{1}$). Since $T\hookrightarrow \tilde{T}$ there exists a sub-torus $S$ such that $\tilde{T}=\tilde{T}_{\circ}\times \tilde{T}_{1}=T\times S$. Clearly, $\tilde{T}_{1}= T_{1}\times S_{1}$. Indeed,
\begin{align*}
\tilde{T}&= \tilde{T}_{\circ}\times \tilde{T}_{1}\\
&= T_{\circ}\times T_{1}\times S_{\circ}\times S_{1}\\
&= {T}_{\circ}\times {S}_{\circ}\times T_{1}\times S_{1}.\\
\end{align*}

Now $\tilde{z}_{1}\in \tilde{Z_{1}}\subset \tilde{T_{1}}=T_{1}\times S_{1}$. Therefore $\tilde{z}_{1}= t_{1}s_{1}$ for $t_{1}\in T_{1},\, s_{1}\in S_{1}$. Also $z_{1}= \tilde{z}^{2}_{1}= t^{2}_{1}s^{2}_{1}\in Z_{1}\subset T_{1}$. We see that $s^{2}_{1}=1$. Since $\tilde{T}_{1}$ is torsion free it follows that $s_{1}=1$ and $\tilde{z}_{1}\in T$.
\end{proof}

\subsubsection{Irreducible representation $\tilde{\pi}$ of $\tilde{G}$} In this section, we show that there exists an irreducible representation $\tilde{\pi}$ of $\tilde{G}$ which contains $\pi$ with multiplicity one on restriction to $G$.\\

The main idea behind the proof is Theorem~\ref{Gelbart-Knapp-2}. We first extend the representation $\pi$ to an irreducible representation $\pi\nu$ of $\tilde{Z}G$ and show that the group $\tilde{G}/\tilde{Z}G$ is finite abelian. Before we continue, we recall a result of Serre which we use in proving the finiteness of $\tilde{G}/\tilde{Z}G$.

\begin{proposition}[Serre]\label{Serre's theorem} If $A$ is a finite $\Gamma$ module, $H^{n}(\Gamma, A)$ is finite for every $n$.
\end{proposition}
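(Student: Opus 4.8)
The group $\Gamma$ entering the finiteness of $\tilde G/\tilde ZG$ is the absolute Galois group $\Gal(\bar F/F)$ of the local field $F$, and the plan is to reduce the statement, by a d\'evissage valid for any profinite group, to one arithmetic input about $F$: since $F$ has only finitely many extensions of each given degree inside $\bar F$, the group $\Gamma$ has only finitely many open subgroups of each index, equivalently $\Hom_{\mathrm{cont}}(\Gamma',\mathbb{Z}/p)$ is finite for every open $\Gamma'\le\Gamma$ and every prime $p$. I also use two classical facts about local fields: every open subgroup of $\Gamma$ is again the absolute Galois group of a local field, and $\Gamma$ has finite cohomological dimension, indeed $\mathrm{cd}_p(\Gamma)\le 2$ for every $p$ (recall that $F$ has characteristic $0$).

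First I would reduce to simple coefficients. A finite $\Gamma$-module $A$ admits a finite filtration by $\Gamma$-submodules whose successive quotients are simple, and the long exact cohomology sequences attached to these short exact sequences reduce finiteness of $H^n(\Gamma,A)$ to that of $H^n(\Gamma,S)$ for $S$ finite and simple. Such an $S$ is annihilated by a prime $p$, and its continuous $\Gamma$-action factors through a finite quotient $\Gamma/N$ with $N$ open and normal in $\Gamma$ acting trivially on $S$. Feeding this into the Hochschild--Serre spectral sequence $H^i(\Gamma/N,H^j(N,S))\Rightarrow H^{i+j}(\Gamma,S)$ and using that $\Gamma/N$ is a \emph{finite} group (so all of its cohomology with finite coefficients is finite), the problem reduces to showing $H^j(N,S)$ finite for every $j$; since $N$ acts trivially on $S$, since $S$ is a finite $\mathbb{F}_p$-vector space, and since $N=\Gal(\bar F/F')$ for a finite extension $F'/F$, this is precisely the original statement for the local field $F'$ and the trivial module $\mathbb{Z}/p$.

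It remains to handle $A=\mathbb{Z}/p$ with trivial action. Let $\Gamma_p$ be a pro-$p$ Sylow subgroup of $\Gamma$. Since corestriction after restriction is multiplication by the index $[\Gamma:\Gamma_p]$, which is prime to $p$ and hence invertible on a $p$-torsion group, the restriction map $H^n(\Gamma,\mathbb{Z}/p)\to H^n(\Gamma_p,\mathbb{Z}/p)$ is injective, so it suffices to bound the right-hand side. Now $H^0=\mathbb{Z}/p$ is finite; $H^1(\Gamma_p,\mathbb{Z}/p)=\Hom_{\mathrm{cont}}(\Gamma_p,\mathbb{Z}/p)$ is finite by the arithmetic input above (whence, by the Burnside basis theorem, $\Gamma_p$ is topologically finitely generated); $H^2(\Gamma_p,\mathbb{Z}/p)$ is finite because $H^2$ of a finite $\Gamma$-module is finite by local class field theory and local Tate duality (for instance $H^2(\Gamma,\mathbb{Z}/p)$ is the $p$-torsion of $\mathrm{Br}(F)\cong\mathbb{Q}/\mathbb{Z}$), and this finiteness passes to the Sylow subgroup; and $\mathrm{cd}(\Gamma_p)=\mathrm{cd}_p(\Gamma_p)\le\mathrm{cd}_p(\Gamma)\le 2$ forces $H^n(\Gamma_p,\mathbb{Z}/p)=0$ for $n\ge 3$. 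This closes the induction and proves the proposition.

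The genuine obstacle is the base case in degrees $\ge 2$: finiteness of $H^0$ and $H^1$ is essentially a restatement of the hypotheses, but controlling $H^2$ together with the vanishing beyond degree $2$ is not formal --- it fails for a general profinite group with finite $H^1$, and it uses the arithmetic of local fields (local class field theory, bounded cohomological dimension). A shorter but less self-contained route avoids the Sylow argument altogether by invoking local Tate--Poitou duality directly: this gives $H^n(\Gamma,A)=0$ for $n>2$ and identifies $H^2(\Gamma,A)$ with the Pontryagin dual of the evidently finite group $H^0(\Gamma,A^{\vee}(1))$, while $H^0(\Gamma,A)$ is finite trivially and $H^1(\Gamma,A)$ is then finite by the local Euler--Poincar\'e characteristic formula.
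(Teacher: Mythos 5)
The paper's own ``proof'' is simply a citation to Serre's \emph{Galois Cohomology} (Ch.\ II, \S 5.2, Prop.\ 14), so your write-up is an attempt to reconstruct that argument. The d\'evissage at the start is correct and efficient: filtering $A$ into simple pieces, passing to an open normal $N$ that acts trivially on a simple $S\cong(\mathbb{Z}/p)^{r}$ via Hochschild--Serre (with the finite quotient $\Gamma/N$), and identifying $N$ with the absolute Galois group of a finite extension $F'/F$. The use of $\mathrm{cd}_{p}(\Gamma)\le 2$ to kill degrees $\ge 3$ is also the right tool.

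The treatment of the base case $A=\mathbb{Z}/p$ via a Sylow pro-$p$ subgroup $\Gamma_{p}$, however, has a genuine gap and cannot be made to work in that form. Injectivity of $H^{n}(\Gamma,\mathbb{Z}/p)\to H^{n}(\Gamma_{p},\mathbb{Z}/p)$ on $p$-torsion is fine, but it only transfers an \emph{upper} bound from $\Gamma_{p}$ to $\Gamma$, and the target is not finite. Your ``arithmetic input'' --- finitely many extensions of each degree, hence $\Hom_{\mathrm{cont}}(\Gamma',\mathbb{Z}/p)$ finite --- applies only to \emph{open} $\Gamma'$, whereas a Sylow pro-$p$ subgroup of $\Gamma=\Gal(\bar F/F')$ has infinite supernatural index: its fixed field $L$ already contains all unramified extensions of prime-to-$p$ degree. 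In fact $H^{1}(\Gamma_{p},\mathbb{Z}/p)$ is \emph{infinite}. Writing $L=\bigcup_{i}K_{i}$ with $K_{i}/F'$ finite, one has $H^{1}(\Gamma_{p},\mathbb{Z}/p)=\varinjlim_{i}H^{1}(G_{K_{i}},\mathbb{Z}/p)$ with injective transition maps (each $[K_{j}:K_{i}]$ is prime to $p$), while the local Euler--Poincar\'e count gives $\dim H^{1}(G_{K_{i}},\mathbb{Z}/p)\ge 1+[K_{i}:\mathbb{Q}_{p}]\to\infty$. Equivalently, $\Gamma_{p}$ is \emph{not} topologically finitely generated as a pro-$p$ group, contrary to the Burnside-basis remark. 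The same issue undercuts the claim that finiteness of $H^{2}$ ``passes to the Sylow subgroup.''

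The standard way to close the base case --- and essentially what Serre does --- is to change the \emph{coefficients} rather than shrink the group: after a further finite extension $K'/K$ one has $\mathbb{Z}/p\cong\mu_{p}$ as $G_{K'}$-modules, and then $H^{1}(K',\mu_{p})=K'^{\times}/(K'^{\times})^{p}$ is finite because $K'^{\times}\cong\mathbb{Z}\times\mathcal{O}_{K'}^{\times}$ with $\mathcal{O}_{K'}^{\times}$ a finitely generated $\mathbb{Z}_{p}$-module times a finite group; $H^{2}(K',\mu_{p})=\mathrm{Br}(K')[p]\cong\mathbb{Z}/p$ by local class field theory; and $H^{q}=0$ for $q\ge 3$. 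This is precisely the ``shorter route'' you sketch at the end (with one caveat: the Euler--Poincar\'e formula is usually proved \emph{after} finiteness, so derive $H^{1}$ finiteness from Kummer theory directly rather than from that formula, to avoid circularity). Replacing the Sylow paragraph by this gives a correct proof.
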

\begin{proof} See Proposition 14, Sec. 5.1 in \cite{Ser}.
\end{proof}

Let $(\pi,W)$ be an irreducible representation of $G$ and $\nu$ be a self-dual character of $\tilde{Z}$ extending the central character $\omega_{\pi}$. Let $\pi\nu: \tilde{Z}G\rightarrow \GL(W)$ be defined as $(\pi\nu)(\tilde{z}g)= \nu(\tilde{z})\pi(g)$. Clearly, $\pi\nu$ is a well-defined irreducible representation of $\tilde{Z}G$. The irreducibility of $\pi\nu$ is trivial. To see $\pi\nu$ is well-defined, suppose $z_{1}g_{1}= z_{2}g_{2}$. Then
\begin{align*}
(\pi\nu)(z_{1}g_{1})&= (\pi\nu)(z_{1}z_{2}^{-1}z_{2}g_{1})\\
&= \nu(z_{1}z_{2}^{-1})\nu(z_{2})\pi(g_{1})\\
&= \omega_{\pi}(z_{1}z_{2}^{-1})\nu(z_{2})\pi(g_{1})\quad (\text{since}\, z_{1}z_{2}^{-1} \in \tilde{Z}\cap G)\\
&= \omega_{\pi}(z_{1}z_{2}^{-1})\nu(z_{2})\pi(z_{1}^{-1}z_{2}g_{2})\\
&= \omega_{\pi}(z_{1}z_{2}^{-1})\nu(z_{2})\omega_{\pi}(z_{1}^{-1}z_{2})\pi(g_{2})\\
&= (\pi\nu)(z_{2}g_{2}).
\end{align*}
\vspace{0.5 cm}
We now prove the finiteness of $\tilde{G}/\tilde{Z}G$.
\begin{theorem}
$\tilde{G}/\tilde{Z}G$ is a finite abelian group.
\end{theorem}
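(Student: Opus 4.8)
The plan is to establish the statement for the underlying algebraic groups and then descend to $F$-points via Galois cohomology. Write $\mathbf{G}, \tilde{\mathbf{G}}, \tilde{\mathbf{Z}}$ for the algebraic groups, so that $G = \mathbf{G}(F)$, $\tilde G = \tilde{\mathbf{G}}(F)$, $\tilde Z = \tilde{\mathbf{Z}}(F)$. First I would record that the embedding $\mathbf{G} \hookrightarrow \tilde{\mathbf{G}}$ furnished by the construction of $(\tilde{\mathbf{G}}, \tilde{\mathbf{T}})$ (which comes from the surjection $p_{1}\colon \tilde X \to X$ being a bijection on roots) carries the root subgroup $\mathbf{U}_{\alpha}$ ($\alpha \in \Phi$) onto the root subgroup of $\tilde{\mathbf{G}}$ attached to $(\alpha, 0) \in \tilde{\Phi}$, so that $\mathbf{G}$ and $\tilde{\mathbf{G}}$ have the same root subgroups and hence the same derived group: $\mathbf{G}_{\mathrm{der}} = \langle \mathbf{U}_{\alpha} \mid \alpha \in \Phi \rangle = \tilde{\mathbf{G}}_{\mathrm{der}}$. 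Because a connected reductive group is the almost-direct product of its connected center and its derived group (and $\tilde{\mathbf{Z}}$ is the full center of $\tilde{\mathbf{G}}$ since it is connected), $\tilde{\mathbf{G}} = \tilde{\mathbf{Z}}\,\tilde{\mathbf{G}}_{\mathrm{der}} = \tilde{\mathbf{Z}}\,\mathbf{G}_{\mathrm{der}} \subseteq \tilde{\mathbf{Z}}\mathbf{G} \subseteq \tilde{\mathbf{G}}$, so $\tilde{\mathbf{G}} = \tilde{\mathbf{Z}}\mathbf{G}$. Since $\tilde{\mathbf{Z}}$ is central, $\mathbf{G}$ and $\mathbf{G}_{\mathrm{der}}$ are normal in $\tilde{\mathbf{G}}$, so $\tilde Z G$ and $\tilde Z\, G_{\mathrm{der}}$ are normal in $\tilde G$; and every commutator $[\tilde g_{1}, \tilde g_{2}]$ with $\tilde g_{i} \in \tilde G$ lies in $\tilde{\mathbf{G}}_{\mathrm{der}}(F) = \mathbf{G}_{\mathrm{der}}(F) \subseteq G \subseteq \tilde Z G$, so $\tilde G / \tilde Z G$ is abelian.

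For finiteness I would use the multiplication morphism $m \colon \tilde{\mathbf{Z}} \times \mathbf{G}_{\mathrm{der}} \to \tilde{\mathbf{G}}$, $(z,h) \mapsto zh$, which is surjective by the previous paragraph. Its kernel is $\{(z, z^{-1}) \mid z \in \tilde{\mathbf{Z}} \cap \mathbf{G}_{\mathrm{der}}\}$, isomorphic to $\mathbf{F} := \tilde{\mathbf{Z}} \cap \mathbf{G}_{\mathrm{der}}$; since $\mathbf{G}_{\mathrm{der}}$ is semisimple, $\mathbf{Z}(\mathbf{G}_{\mathrm{der}})$ is finite, and as $\mathbf{F} \subseteq \mathbf{Z}(\mathbf{G}_{\mathrm{der}})$ the group $\mathbf{F}$ is a finite central subgroup of $\tilde{\mathbf{Z}} \times \mathbf{G}_{\mathrm{der}}$ (it is even étale, as $\mathrm{char}\,F = 0$). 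The cohomology sequence of the central extension $1 \to \mathbf{F} \to \tilde{\mathbf{Z}} \times \mathbf{G}_{\mathrm{der}} \xrightarrow{m} \tilde{\mathbf{G}} \to 1$ then provides a group homomorphism $\delta \colon \tilde G \to H^{1}(F, \mathbf{F})$ whose kernel is the image of $\tilde Z \times G_{\mathrm{der}}$ under $m$, that is $\tilde Z\, G_{\mathrm{der}}$. Hence $\tilde G / \tilde Z\, G_{\mathrm{der}}$ embeds into $H^{1}(F, \mathbf{F})$, which is finite by Proposition~\ref{Serre's theorem} applied to the finite Galois module $\mathbf{F}(\bar F)$. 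As $\tilde Z\, G_{\mathrm{der}} \subseteq \tilde Z G$, the group $\tilde G / \tilde Z G$ is a quotient of the finite group $\tilde G / \tilde Z\, G_{\mathrm{der}}$, hence finite.

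The main obstacle is the finiteness step, and specifically the choice of auxiliary surjection: the obvious map $\tilde{\mathbf{Z}} \times \mathbf{G} \to \tilde{\mathbf{G}}$ has kernel $\tilde{\mathbf{Z}} \cap \mathbf{G} = \mathbf{Z}(\mathbf{G})$, which is positive-dimensional whenever $\mathbf{G}$ has a central torus, and then one cannot invoke Serre's finiteness theorem for its first cohomology. Replacing $\mathbf{G}$ by $\mathbf{G}_{\mathrm{der}}$, whose center is always finite, resolves this; enlarging the denominator from $\tilde Z\, G_{\mathrm{der}}$ to $\tilde Z G$ afterwards costs nothing, since one only passes to a further quotient. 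The remaining point requiring care is the identification $\tilde{\mathbf{G}}_{\mathrm{der}} = \mathbf{G}_{\mathrm{der}}$, which is where the explicit root datum $(\tilde X, \tilde\Phi, \tilde X^{\vee}, \tilde\Phi^{\vee})$ built in the construction of $\tilde{\mathbf{G}}$ is used.
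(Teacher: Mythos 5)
Your proof is correct, and it takes a genuinely different route from the paper's, even though both ultimately rest on Galois cohomology and the same finiteness result of Serre (Proposition~\ref{Serre's theorem}). The paper first reduces the problem to tori via $\tilde{G}=\tilde{T}G$, identifying $\tilde{G}/\tilde{Z}G$ with $\tilde{T}/\tilde{Z}T$, and then takes the cohomology of the multiplication sequence $1\to Z\to T\times\tilde{Z}\to\tilde{T}\to 1$. Because its kernel $Z=Z(\mathbf{G})$ may be positive-dimensional, the paper needs a second cohomology step, splitting $Z$ by $1\to Z^{\circ}\to Z\to Z/Z^{\circ}\to 1$ and using that $Z^{\circ}$ is a split torus (Hilbert~90) to embed $H^{1}(\Gamma,Z(\bar F))$ into $H^{1}(\Gamma,Z(\bar F)/Z^{\circ}(\bar F))$, to which Serre's theorem applies. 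You avoid that second step by choosing a different surjection, $\tilde{\mathbf{Z}}\times\mathbf{G}_{\mathrm{der}}\to\tilde{\mathbf{G}}$, whose kernel $\tilde{\mathbf{Z}}\cap\mathbf{G}_{\mathrm{der}}$ is already finite (being contained in the centre of the semisimple group $\mathbf{G}_{\mathrm{der}}$), so Serre applies in one shot. The price you pay is having to justify $\mathbf{G}_{\mathrm{der}}=\tilde{\mathbf{G}}_{\mathrm{der}}$, which requires unwinding the root-datum construction of $\tilde{\mathbf{G}}$ (the bijection $\Phi\leftrightarrow\tilde\Phi$ and the matching of root subgroups) rather than just using $\tilde{G}=\tilde{T}G$; in exchange, your abelianness argument via commutators landing in $\tilde{\mathbf{G}}_{\mathrm{der}}(F)\subseteq G$ is also somewhat more structural than the paper's purely group-theoretic chain of isomorphisms. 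Both proofs are sound.
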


\begin{proof} Clearly, $\tilde{G}= \tilde{T}G$. Now
\begin{align*}
\tilde{G}/\tilde{Z}G &= \tilde{T}G/\tilde{Z}G\\
&= (\tilde{T}\tilde{Z}G)/\tilde{Z}G\\
&= \tilde{T}/(\tilde{T}\cap\tilde{Z}G)\\
&= \tilde{T}/\tilde{Z}T.
\end{align*}
It follows that $\tilde{G}/\tilde{Z}G$ is abelian. Let $\bar{F}$ be the algebraic closure of $F$ and $\Gamma =
\Gal(\bar{F}/F)$. Let $m: T(\bar{F})\times
\tilde{Z}(\bar{F})\rightarrow \tilde{T}(\bar{F})$ be the
multiplication map. This map is surjective with $\Ker(m)= \{(z,z^{-1})\mid z\in Z(\bar{F})\}$ (follows by considering the dimensions). Considering $Z(\bar{F})$ embedded diagonally in $T(\bar{F})\times \tilde{Z}(\bar{F})$, we get the following exact sequence of abelian groups
\begin{displaymath}
1 \longrightarrow Z(\bar{F}) \longrightarrow T(\bar{F})\times \tilde{Z}(\bar{F})\overset{m}\longrightarrow \tilde{T}(\bar{F})\longrightarrow 1.
\end{displaymath}
$\Gamma$ clearly acts on these groups and applying Galois cohomology, we get a long exact sequence of cohomology groups
\begin{gather*}
1 \longrightarrow Z(\bar{F})^{\Gamma} \longrightarrow T(\bar{F})^{\Gamma}\times \tilde{Z}(\bar{F})^{\Gamma}\longrightarrow \tilde{T}(\bar{F})^{\Gamma}\longrightarrow H^{1}(\Gamma, Z(\bar{F}))\\\longrightarrow H^{1}(\Gamma, T(\bar{F})\times \tilde{Z}(\bar{F}))
\longrightarrow H^{1}(\Gamma, \tilde{Z}(\bar{F}))\longrightarrow \cdots
\end{gather*}

We note that $H^{1}(\Gamma, T(\bar{F})\times \tilde{Z}(\bar{F}))=1$ (Hilbert 90) and we get the short exact sequence

\begin{equation}\label{eq:3}
1 \longrightarrow Z \longrightarrow T \times \tilde{Z}\overset{m}\longrightarrow \tilde{T}\overset{\varphi}\longrightarrow H^{1}(\Gamma, Z(\bar{F}))\longrightarrow 1.
\end{equation}
From (~\ref{eq:3}) it follows that $\varphi$ is surjective, $\Image(m)= T\tilde{Z}= \Ker(\varphi)$, and $\tilde{T}/\tilde{Z}T\simeq H^{1}(\Gamma, Z(\bar{F}))$. It is enough to show that $H^{1}(\Gamma, Z(\bar{F}))$ is finite. Let $Z^{\circ}$ be the identity component of the algebraic group $Z$. Consider the short exact sequence
\begin{equation}\label{eq:4}
1\longrightarrow Z^{\circ}(\bar{F})\longrightarrow Z(\bar{F})\longrightarrow Z(\bar{F})/Z^{\circ}(\bar{F})\longrightarrow 1.
\end{equation}
Applying Galois cohomology again to \eqref{eq:4}, we get the sequence
\begin{gather*}
1\longrightarrow Z^{\circ}\longrightarrow Z\longrightarrow Z/Z^{\circ}
\longrightarrow H^{1}(\Gamma, Z^{\circ}(\bar{F})) \longrightarrow H^{1}(\Gamma, Z(\bar{F}))\\\longrightarrow H^{1}(\Gamma, Z(\bar{F})/Z^{\circ}(\bar{F}))\longrightarrow \cdots
\end{gather*}

\noindent Since $Z^{\circ}(\bar{F})$ is connected, we have
$H^{1}(\Gamma, Z^{\circ}(\bar{F}))=1$ and it follows that $H^{1}(\Gamma, Z(\bar{F})) \hookrightarrow H^{1}(\Gamma,
Z(\bar{F})/Z^{\circ}(\bar{F}))$. Since $F$ is a local field of
$\ch \, 0$ and $Z(\bar{F})/Z^{\circ}(\bar{F})$ is a finite
abelian group, $H^{1}(\Gamma,
Z(\bar{F})/Z^{\circ}(\bar{F}))$ is finite (Proposition~\ref{Serre's theorem}). Hence the result follows.
\end{proof}

By Theorem~\ref{Gelbart-Knapp-1} and Theorem~\ref{Gelbart-Knapp-2}, we get an irreducible representation $(\tilde{\pi},V)$ of $\tilde{G}$ which breaks up as a finite direct sum of distinct irreducible representations $\pi_{1}, \ldots, \pi_{k}$ each occurring with the same multiplicity $m$ on restriction to $\tilde{Z}G$ and contains $\pi\nu$ as a constituent. Without loss of generality, we assume that $\pi_{1}\simeq \pi\nu$. To simplify notation, we again denote the restriction of $\pi_{i}$'s to $G$ by $\pi_{i}$ so that $\displaystyle \tilde{\pi}|_{G}= m\pi_{1}\oplus
m\pi_{2}\oplus \ldots \oplus m\pi_{k}$ and $\pi\simeq \pi_{1}$. We now show that each $\pi_{i}$ occurs with multiplicity one in $\tilde{\pi}|_{G}$.

\begin{lemma} The representation $(\tilde{\pi}, V)$ of $\tilde{G}$ is generic and each irreducible representation $\pi_{i}$ occurs with multiplicity one.
\end{lemma}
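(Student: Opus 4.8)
The plan is to extract everything from uniqueness of the Whittaker model, using that the unipotent radical $U$ of $B$ and the non-degenerate character $\psi$ sit unchanged inside $\tilde{G}$. First I would observe that, since $\tilde{\Phi}$ was constructed as $\{(\alpha,0)\mid\alpha\in\Phi\}$ and identified with $\Phi$, the root subgroups of $\tilde{G}$ indexed by $\tilde{\Phi}$ (in particular by $\tilde{\Delta}$) coincide with those of $G$; consequently $U$ is also the unipotent radical of the Borel subgroup $\tilde{B}=\tilde{T}U$ of $\tilde{G}$, and $\psi$ remains a non-degenerate character of $U$ relative to $\tilde{G}$. Thus $\Hom_{U}(-,\psi)$ computes genericity for smooth representations of either group, and because $U\subseteq G\subseteq\tilde{G}$, restriction of $\tilde{\pi}$ to $U$ factors through restriction to $G$.

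Now $\tilde{\pi}|_{G}=m\pi_{1}\oplus\cdots\oplus m\pi_{k}$ with $\pi_{1}\simeq\pi$, so $\tilde{\pi}|_{U}=\bigoplus_{i}m\,\pi_{i}|_{U}$ and applying $\Hom_{U}(-,\psi)$ gives
\[
\Hom_{U}(\tilde{\pi},\psi)\;\cong\;\bigoplus_{i=1}^{k} m\,\Hom_{U}(\pi_{i},\psi).
\]
The right-hand side contains $\Hom_{U}(\pi_{1},\psi)=\Hom_{U}(\pi,\psi)$, which is nonzero by the choice of $\psi$; hence $\Hom_{U}(\tilde{\pi},\psi)\neq 0$ and $\tilde{\pi}$ is generic.

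To finish, I would apply Rodier's theorem (Theorem~\ref{Rodier}) to the irreducible admissible generic representation $\tilde{\pi}$ of the split --- in particular quasi-split --- group $\tilde{G}$, which gives $\dim_{\mathbb{C}}\Hom_{U}(\tilde{\pi},\psi)=1$, and likewise to $\pi$, which gives $\dim_{\mathbb{C}}\Hom_{U}(\pi_{1},\psi)=1$. Feeding these into the displayed isomorphism yields
\[
1=\dim_{\mathbb{C}}\Hom_{U}(\tilde{\pi},\psi)=m\sum_{i=1}^{k}\dim_{\mathbb{C}}\Hom_{U}(\pi_{i},\psi)\ \ge\ m,
\]
and since $m\ge 1$ we conclude $m=1$: each $\pi_{i}$ occurs in $\tilde{\pi}|_{G}$ with multiplicity one. (It also drops out that $\Hom_{U}(\pi_{i},\psi)=0$ for $i\neq 1$, i.e.\ $\pi$ is the only constituent of $\tilde{\pi}|_{G}$ that is $\psi$-generic.)

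I do not anticipate a genuine obstacle here; the only care needed is the bookkeeping that $U$ and $\psi$ are literally the same data in $\tilde{G}$ as in $G$, and the legitimacy of invoking uniqueness of the Whittaker model for $\tilde{G}$ --- which is fine since $\tilde{G}$ is split, hence quasi-split. Everything else is additivity of $\Hom_{U}(-,\psi)$ over the finite direct sum $\tilde{\pi}|_{G}$.
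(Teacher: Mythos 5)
Your argument is correct and follows essentially the same route as the paper: identify $U$ with $\tilde{U}$ so that $\psi$ carries over, note that $\Hom_U(\tilde{\pi}|_U,\psi)$ splits over the decomposition $\tilde{\pi}|_G=\bigoplus m\pi_i$ and is nonzero because $\Hom_U(\pi_1,\psi)=\Hom_U(\pi,\psi)\neq 0$, then invoke uniqueness of the Whittaker model for the generic representation $\tilde{\pi}$ (stated in the paper as part of Theorem~\ref{Rodier}) to force $m=1$. The only cosmetic difference is that the paper phrases the final step via Frobenius reciprocity as $\dim\Hom_{\tilde G}(\tilde\pi,\Ind_U^{\tilde G}\psi)=1$ rather than working directly with $\dim\Hom_U(\tilde\pi,\psi)=1$, which is the same statement.
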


\begin{proof} Since $(\pi,W)$ is generic, there exists a non-degenerate character $\psi$ of $U$ such that $\Hom_{G}(\pi, \Ind_{U}^{G}\psi)\neq 0$. It is enough to show that $\Hom_{\tilde{G}}(\tilde{\pi}, \Ind_{\tilde{U}}^{\tilde{G}}\psi)\neq 0$.
Observe that $\tilde{U}= U\subset G$. Consider the restriction $\tilde{\pi}|_{U}$ of $\tilde{\pi}$.
Since $\pi$ is generic, $\Hom_{U}(\pi|_{U}, \psi)\neq 0$. It follows that
\begin{displaymath}
\Hom_{U}(\tilde{\pi}|_{U}, \psi)\simeq \Hom_{\tilde{G}}(\tilde{\pi}, \Ind_{\tilde{U}}^{\tilde{G}}\psi)\neq 0.
\end{displaymath}
Indeed,
\begin{align*}
\Hom_{U}(\tilde{\pi}|_{U}, \psi)&= \Hom_{U}((\tilde{\pi}|_{G})|_{U}, \psi)\\
&= \Hom_{U}((m\pi_{1}\oplus m\pi_{2}\oplus \cdots \oplus m\pi_{k})|_{U}, \psi)\\
&= m\bigoplus_{i=1}^k \Hom_{U}(\pi_{i}|_{U}, \psi)\\
&\neq 0.
\end{align*}
As $\tilde{\pi}$ is generic it follows that $\displaystyle dim (\Hom_{\tilde{G}}(\tilde{\pi}, \Ind_{U}^{\tilde{G}}\psi))= 1$. Thus by Frobenius Reciprocity, $m=1$.
\end{proof}

\subsubsection{Choosing $\tilde{\pi}$ with non-zero $\tilde{I}$ fixed vectors} In this section, we show that the representation $\tilde{\pi}$ can be modified in such a way that it has non-zero vectors fixed under an Iwahori subgroup $\tilde{I}$ in $\tilde{G}$.

\begin{lemma}\label{lifting from I tilde to G tilde} Suppose that $\tau_{1}$ is a linear character of $\tilde{I}$ which is trivial on $I$. Then $\tau_{1}$ extends to a linear character
$\tilde{\tau}$ of $\tilde{G}$ which is trivial on $G$.
\end{lemma}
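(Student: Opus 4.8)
The plan is to leverage the structure $\tilde{G}/G \cong \tilde{T}/T$ together with the fact that $\tilde{I}/I$ sits inside this quotient, and then extend characters along a split surjection. First I would recall that $\tilde{G} = \tilde{T}G$ (as noted in the proof that $\tilde{G}/\tilde{Z}G$ is finite), so $\tilde{G}/G \cong \tilde{T}/(\tilde{T}\cap G) = \tilde{T}/T$; since $\tilde{T} = T \times S$ for the subtorus $S$ constructed earlier, we have $\tilde{G}/G \cong S$ as abelian groups. Next I would identify $\tilde{I}$: the Iwahori subgroup $\tilde{I}$ of $\tilde{G}$ can be chosen so that $\tilde{I} \cap T = \tilde{T}_\circ$ (the maximal compact subgroup of $\tilde{T}$) and $\tilde{I} = \langle \tilde{T}_\circ, I\rangle$ with $I = \tilde{I}\cap G$, so that the natural map $\tilde{I}/I \to \tilde{G}/G$ identifies $\tilde{I}/I$ with the image of $\tilde{T}_\circ$, i.e.\ with $S_\circ \subset S$.

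The key step is then purely algebraic: given a linear character $\tau_1 \colon \tilde{I} \to \mathbb{C}^\times$ trivial on $I$, it factors through $\tilde{I}/I \cong S_\circ$. I want to extend it to a character of $S \cong \tilde{G}/G$, which then pulls back to the desired $\tilde\tau$ on $\tilde{G}$ trivial on $G$. Since $S$ is an $F$-split torus, $S \cong (F^\times)^r$ for some $r$, and $S_\circ \cong (\mathfrak{O}^\times)^r$ is a direct factor: $S = S_\circ \times S_1$ with $S_1 \cong \mathbb{Z}^r$. Thus any character of $S_\circ$ extends (trivially on $S_1$, say) to a character of $S$. Pulling this back through $\tilde{G} \twoheadrightarrow \tilde{G}/G \cong S$ gives a linear character $\tilde\tau$ of $\tilde{G}$, trivial on $G$ by construction, and whose restriction to $\tilde{I}$ agrees with $\tau_1$ because the composite $\tilde{I}/I \hookrightarrow \tilde{G}/G \cong S$ lands in $S_\circ$ where $\tilde\tau$ was defined to match $\tau_1$. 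One should check the composite $\tilde{I}/I \to S$ really is the identification with $S_\circ$ and not merely some map into $S$; this follows from compatibility of the chosen Iwahori (namely $\tilde{I}$ contains $\tilde{T}_\circ$ and the affine root groups, and $\tilde{T}_\circ = T_\circ \times S_\circ$).

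The main obstacle I anticipate is the bookkeeping around the Iwahori subgroups: one must pin down $\tilde{I}$ precisely enough — via the Bruhat–Tits building or the Iwahori factorization $\tilde{I} = \tilde{I}^- \tilde{T}_\circ \tilde{I}^+$ — to be sure that $\tilde{I} \cap G = I$ and that $\tilde{I}/I$ is exactly $S_\circ$ (and not a proper subgroup or a quotient thereof, which would obstruct or complicate the extension). Once the group-theoretic dictionary $\tilde{G}/G \cong S$, $\tilde{I}/I \cong S_\circ$ is established, the extension of the character is automatic from the splitting of the split torus, so I expect no essential difficulty there; the finiteness of $\tilde{G}/\tilde{Z}G$ proved above is not even needed here, only that $\tilde G = \tilde T G$.
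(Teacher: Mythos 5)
Your proposal is correct and follows essentially the same route as the paper: the Iwahori factorization $\tilde{I}=I^{-}\tilde{T}_{\circ}I^{+}$ gives $\tilde{I}/I\simeq\tilde{T}_{\circ}/T_{\circ}$, the character is then extended trivially across the discrete (cocharacter-lattice) part of the torus, and finally lifted to $\tilde{G}$ using $\tilde{G}=\tilde{T}G$. The only cosmetic difference is that you phrase everything in terms of the complementary subtorus $S$ (with $\tilde{T}=T\times S$, $S=S_{\circ}\times S_{1}$), whereas the paper works directly with the quotients $\tilde{T}_{\circ}/T_{\circ}$ and $\tilde{T}_{1}/T_{1}$ — the two formulations are identified by the same splitting $\tilde{T}=T\times S$ used elsewhere in the paper.
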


\begin{proof} Let $I^{-}= I\cap \bar{U}$ and $I^{+}= I\cap U$. We know that $I=I^{-}T_{\circ}I^{+}$. Since $U=\tilde{U}$ we have $\tilde{I}= I^{-}\tilde{T}_{\circ}I^{+}$. Now
\begin{align*}
\tilde{I}/I&= \tilde{T}_{\circ}I/I \,\, (\, \tilde{T}_{\circ}\, \text{normalizes}\,\, I^{-},\, I^{+}\,)\\
&= \tilde{T}_{\circ}/\tilde{T}_{\circ}\cap I\\
&= \tilde{T}_{\circ}/T_{\circ}.
\end{align*}
It follows that we can consider $\tau_{1}$ as a linear character of $\tilde{T_{\circ}}$ which is trivial on $T_{\circ}$.
We first extend $\tau_{1}$ to a character $\tilde{\tau}_{1}$ of $\tilde{T}$ by making it trivial on $\tilde{T}_{1}$,
i.e., $\tilde{\tau}_{1}(\tilde{t}_{\circ}\tilde{t}_{1})= \tau_{1}(\tilde{t}_{\circ})$.
Now define an extension $\tilde{\tau}$ of $\tilde{\tau}_{1}$ to $\tilde{G}$ as
$\tilde{\tau}(\tilde{t}g)= \tilde{\tau}_{1}(\tilde{t}), \tilde{t}\in \tilde{T}, g\in G$ (this is possible since $\tilde{G}=\tilde{T}G$). Using
\begin{displaymath}
\tilde{G}/G = \tilde{T}G/T = \tilde{T}/T = \tilde{T}_{\circ}/T_{\circ} \times \tilde{T}_{1}/T_{1}
\end{displaymath}
it follows that $\tilde{\tau}$ is well-defined and a character of $\tilde{G}$.
\end{proof}

\begin{theorem} The representation $(\tilde{\pi}\tau^{-1},V)$ of $\tilde{G}$
has non-zero $\tilde{I}$ fixed vectors.
\end{theorem}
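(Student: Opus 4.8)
The plan is to produce an explicit non-zero $\tilde{I}$-fixed vector in $V$ after a suitable twist by a character that is trivial on $G$. First I would observe that $V^{I}\neq 0$: indeed $\tilde{\pi}|_{G}$ contains $\pi\simeq\pi_{1}$ and $\pi$ has non-zero vectors fixed under $I$, so $(\pi_{1})^{I}\subseteq V^{I}$ is non-zero; since $\tilde{\pi}$ is admissible and $I$ is compact open, $V^{I}$ is moreover finite dimensional.

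Next I would use the structure of $\tilde{I}$ already recorded in the proof of Lemma~\ref{lifting from I tilde to G tilde}: $I$ is normal in $\tilde{I}$ and $\tilde{I}/I\simeq\tilde{T}_{\circ}/T_{\circ}$ is a finite abelian group. Since $\tilde{\pi}(i)$ acts as the identity on $V^{I}$ for $i\in I$, the subspace $V^{I}$ is stable under $\tilde{I}$ and the resulting action factors through the finite abelian group $\tilde{I}/I$; hence the action of $\tilde{I}/I$ on the finite-dimensional complex space $V^{I}$ is diagonalizable, and $V^{I}=\bigoplus_{\chi}V^{I}_{\chi}$ where $\chi$ runs over the characters of $\tilde{I}/I$. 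Fix one character $\chi$ with $V^{I}_{\chi}\neq 0$ and view it as a linear character $\tau_{1}$ of $\tilde{I}$ that is trivial on $I$.

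Now I would apply Lemma~\ref{lifting from I tilde to G tilde} to extend $\tau_{1}$ to a linear character $\tilde{\tau}$ of $\tilde{G}$ which is trivial on $G$, and set $\tau=\tilde{\tau}$; since $\tilde{\tau}$ extends $\tau_{1}$ we have $\tau|_{\tilde{I}}=\tau_{1}=\chi$. Then for any non-zero $v\in V^{I}_{\chi}$ and any $\tilde{i}\in\tilde{I}$,
\begin{displaymath}
(\tilde{\pi}\tau^{-1})(\tilde{i})\,v=\tau(\tilde{i})^{-1}\,\tilde{\pi}(\tilde{i})\,v=\chi(\tilde{i})^{-1}\chi(\tilde{i})\,v=v,
\end{displaymath}
so $v$ is a non-zero $\tilde{I}$-fixed vector for $\tilde{\pi}\tau^{-1}$, which is the assertion.

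The only routine points requiring a little care are the verification that the $\tilde{I}$-action on $V^{I}$ genuinely descends to $\tilde{I}/I$ (using $I\trianglelefteq\tilde{I}$) and that the extension furnished by Lemma~\ref{lifting from I tilde to G tilde} restricts back to $\tau_{1}$ on $\tilde{I}$; both follow quickly from the decompositions $I=I^{-}T_{\circ}I^{+}$, $\tilde{I}=I^{-}\tilde{T}_{\circ}I^{+}$ together with the fact that $\tilde{T}_{\circ}$ normalizes $I^{-}$ and $I^{+}$. I would also note that, because $\tau$ is trivial on $G$, one has $(\tilde{\pi}\tau^{-1})|_{G}=\tilde{\pi}|_{G}$, so $\tilde{\pi}\tau^{-1}$ still contains $\pi$ with multiplicity one; this is what lets the remainder of the proof of the main theorem proceed with $\tilde{\pi}\tau^{-1}$ in place of $\tilde{\pi}$. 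I do not expect any serious obstacle here: the content is entirely the bookkeeping just described.
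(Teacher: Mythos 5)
Your proposal takes essentially the same route as the paper, with a cosmetic difference: the paper fixes one nonzero $I$-fixed vector $v$, sets $V_{0}=\Span_{\mathbb{C}}\{\tilde{\pi}(k)v : k\in\tilde{I}\}$, decomposes $V_{0}$ as a $\tilde{I}$-representation and invokes Clifford's theorem to see that a constituent containing $1_{I}$ is a character of $\tilde{I}$ trivial on $I$; you instead work with the full $I$-fixed subspace $V^{I}$, observe it is $\tilde{I}$-stable because $I\trianglelefteq\tilde{I}$, and diagonalize the $\tilde{I}/I$-action. Both versions then feed the resulting $\tau_{1}$ into Lemma~\ref{lifting from I tilde to G tilde} and twist by $\tilde{\tau}^{-1}$, exactly as in the paper. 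Your framing is arguably tidier, as it avoids Clifford's theorem and does not single out one vector.

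One inaccuracy in your write-up: you assert that $\tilde{I}/I\simeq\tilde{T}_{\circ}/T_{\circ}$ is \emph{finite} abelian, but this is false in general. Since $\tilde{T}$ typically has strictly larger dimension than $T$ (the construction adjoins extra cocharacters via $L$), $\tilde{T}_{\circ}/T_{\circ}$ is a compact but generally infinite abelian group (e.g.\ extra factors of $\mathfrak{O}^{\times}$). The paper is careful to say only ``compact abelian.'' This does not break your argument: a smooth action of a compact abelian group on a finite-dimensional complex space still decomposes as a direct sum of characters (equivalently, since $\tilde{\pi}$ is smooth and $V^{I}$ is finite-dimensional, the action of $\tilde{I}/I$ on $V^{I}$ factors through a \emph{finite} quotient). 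But you should replace ``finite abelian'' with ``compact abelian'' and note that smoothness ensures the action factors through a finite quotient, rather than attributing finiteness to $\tilde{I}/I$ itself.
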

\begin{proof}
Let $v\neq 0\in V$ be such that $\tilde{\pi}(i)v=v,\, \forall i\in I$ ($v$ exists since $\tilde{\pi}|_{G}\supset \pi$ and $\pi$ has non-zero vectors fixed under $I$). Let $\displaystyle V_{0}= \Span_{\mathbb{C}}\{\tilde{\pi}(k)v \mid k\in \tilde{I}\}$. Clearly, $V_{0}$ is an invariant subspace for $\tilde{I}$ and thus we get a representation $(\rho, V_{0})$ of $\tilde{I}$. Suppose $\rho= \tau_{1}\oplus\cdots\oplus\tau_{k}$, where each $\tau_{i}$ is an irreducible representation of $\tilde{I}$. We know that $1_{I}\subset \rho$. Pick an irreducible component, say $\tau_{1}$, that contains $1_{I}$. By Clifford's theorem,
$I\leq \Ker(\tau_{1})$. Since $\tilde{I}/I$ is a compact abelian group, it follows that $\tau_{1}$ is a linear character of $\tilde{I}$ which is trivial on $I$.
By Lemma~\ref{lifting from I tilde to G tilde}, $\tau_{1}$ extends to a linear character $\tilde{\tau}$ of $\tilde{G}$ trivial on $G$. Consider the irreducible representation $\tilde{\pi}\tau^{-1}$.
Clearly it has an $\tilde{I}$ fixed vector. Indeed for $w$ in the space of $\tau_{1}$ and $k\in \tilde{I}$, we have
\begin{align*}
(\tilde{\pi}\tau^{-1})(k)w &= \tau^{-1}(k)\tilde{\pi}(k)w\\
&= \tau^{-1}(k)\tau_{1}(k)w\\
&= w\,\, (\, \text{since}\, \tau|_{\tilde{I}}= \tau_{1}\,).
\end{align*}
\end{proof}

It is easy to see that $\tilde{\pi}\tau^{-1}|_{G}$ contains the representation $\pi$ with multiplicity one, in addition to having non-zero $\tilde{I}$ fixed vectors. To simplify notation, we will denote the representation $\tilde{\pi}\tau^{-1}$ as $\tilde{\pi}$.

\subsubsection{Sign of $\tilde{\pi}$} In this section, we attach a sign $\varepsilon{(\tilde{\pi})}$ to the representation $\tilde{\pi}$. We also give a formula to compute $\varepsilon{(\tilde{\pi})}$ and show that $\varepsilon{(\tilde{\pi})}= \varepsilon{(\pi)}$. Finally we show that $\varepsilon{(\tilde{\pi})}=1$ to complete the proof of the main theorem.\\

Consider the representation $\tilde{\pi}^{\vee}$. This is again an
irreducible representation of $\tilde{G}$ which on restriction to
$\tilde{Z}G$ and contains the representation $\pi\nu$ with
multiplicity 1 (since $\nu= \nu^{-1}$ and $\pi\simeq \pi^{\vee}$). By Theorem~\ref{Gelbart-Knapp-2}, there's a linear character $\chi$ of $\tilde{G}$ trivial on $\tilde{Z}G$ such that $\tilde{\pi}^{\vee}\simeq \tilde{\pi}\otimes \chi$. We use this isomorphism to define a non-degenerate bilinear form $[\,,\,]$ on $V$.

\begin{lemma} There exists a non-degenerate form $[\,,\,]: V \times V \rightarrow \mathbb{C}$ satisfying $[\tilde{\pi}(g)v_{1}, \tilde{\pi}(g)v_{2}]
= \chi^{-1}(g)[v_{1}, v_{2}]$.
\end{lemma}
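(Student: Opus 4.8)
The plan is to mimic the construction of the invariant form carried out in Section~\ref{sign of a rep}, replacing the self-duality isomorphism there by the isomorphism $\tilde{\pi}^{\vee}\simeq\tilde{\pi}\otimes\chi$ produced above. Write $\langle\,,\,\rangle$ for the canonical evaluation pairing on $V\times V^{\vee}$, and fix a vector space isomorphism $s:V\rightarrow V^{\vee}$ realizing $\tilde{\pi}\otimes\chi\simeq\tilde{\pi}^{\vee}$, so that
\begin{displaymath}
s\bigl(\chi(g)\tilde{\pi}(g)v\bigr)=\tilde{\pi}^{\vee}(g)s(v),\qquad \forall\, g\in\tilde{G},\ v\in V.
\end{displaymath}
Such an $s$ exists by the hypothesis $\tilde{\pi}^{\vee}\simeq\tilde{\pi}\otimes\chi$; since $\tilde{\pi}$ is irreducible, Schur's Lemma shows $s$ is unique up to a non-zero scalar, but for the statement we only need one choice. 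I would then define
\begin{displaymath}
[v_{1},v_{2}]=\langle v_{1},s(v_{2})\rangle,\qquad v_{1},v_{2}\in V,
\end{displaymath}
which is a bilinear form, and non-degenerate because $s$ is bijective and $\langle\,,\,\rangle$ is non-degenerate.

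Next I would verify the transformation law by a direct computation, exactly as in the symmetric case of Section~\ref{sign of a rep} but keeping track of the character $\chi$. Writing $\tilde{\pi}(g)v_{2}=\chi(g)^{-1}\bigl(\chi(g)\tilde{\pi}(g)v_{2}\bigr)$ and applying the intertwining property of $s$ gives $s\bigl(\tilde{\pi}(g)v_{2}\bigr)=\chi(g)^{-1}\tilde{\pi}^{\vee}(g)s(v_{2})$. Using this together with the defining property $\langle\tilde{\pi}(g)w,\tilde{\pi}^{\vee}(g)\lambda\rangle=\langle w,\lambda\rangle$ of the contragredient, one obtains
\begin{align*}
[\tilde{\pi}(g)v_{1},\tilde{\pi}(g)v_{2}]&=\langle\tilde{\pi}(g)v_{1},s(\tilde{\pi}(g)v_{2})\rangle\\
&=\chi(g)^{-1}\langle\tilde{\pi}(g)v_{1},\tilde{\pi}^{\vee}(g)s(v_{2})\rangle\\
&=\chi(g)^{-1}\langle v_{1},s(v_{2})\rangle\\
&=\chi^{-1}(g)[v_{1},v_{2}],
\end{align*}
which is the asserted identity.

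There is no serious obstacle here: the argument is a routine variant of the construction in Section~\ref{sign of a rep}, and the only point requiring care is the bookkeeping of which of $\chi$, $\chi^{-1}$ appears and in which argument, so that the final transformation law comes out with $\chi^{-1}(g)$ rather than $\chi(g)$. As a sanity check, when $\chi$ is trivial this recovers the $\tilde{G}$-invariant form of Section~\ref{sign of a rep}, and the form $[\,,\,]$ is again determined up to scaling by $\tilde{\pi}$; establishing that $[\,,\,]$ is symmetric or skew-symmetric, and thereby defining $\varepsilon(\tilde{\pi})$, is then done by the same Schur's-Lemma argument as before and will be taken up in the next lemma.
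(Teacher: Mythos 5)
Your proposal is correct and takes essentially the same approach as the paper: fix an intertwiner $s:V\to V^{\vee}$ realizing $\tilde{\pi}^{\vee}\simeq\tilde{\pi}\otimes\chi$, define $[v_{1},v_{2}]=\langle v_{1},s(v_{2})\rangle$ via the canonical pairing, and verify the transformation law by a direct computation. Your write-up is in fact slightly cleaner than the paper's, which omits a $\phi$ in the first line of its displayed computation.
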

\begin{proof} Since $\tilde{\pi}^{\vee}\simeq \tilde{\pi}\otimes \chi$, there exists a non-zero map $\phi: V \rightarrow V^{\vee}$ such that
$\tilde{\pi}^{\vee}(g)(\phi(v))= \phi((\tilde{\pi}\otimes \chi)(g)v)$. Let $\langle\,,\, \rangle : V \times V^{\vee}\rightarrow \mathbb{C}$
be the canonical $\tilde{G}$ invariant pairing. We define $[\,,\,]: V \times V \rightarrow \mathbb{C}$ as $[v_{1}, v_{2}]= \langle v_{1}, \phi(v_{2})\rangle$.
Clearly this form is non-degenerate and satisfies $[\tilde{\pi}(g)v_{1}, \tilde{\pi}(g)v_{2}]
= \chi^{-1}(g)[v_{1}, v_{2}]$. Indeed,
\begin{align*}
[\tilde{\pi}(g)v_{1}, \tilde{\pi}(g)v_{2}]&= \langle \tilde{\pi}(g)v_{1}, \tilde{\pi}(g)v_{2}\rangle\\
&= \langle \tilde{\pi}(g)v_{1}, \chi^{-1}(g)\tilde{\pi}^{\vee}(g)(\phi(v_{2}))\rangle\\
&= \chi^{-1}(g)\langle \tilde{\pi}(g)v_{1}, \tilde{\pi}^{\vee}(g)(\phi(v_{2}))\rangle\\
&= \chi^{-1}(g)\langle v_{1}, \phi(v_{2})\rangle\\
&= \chi^{-1}(g)[v_{1}, v_{2}].
\end{align*}
\end{proof}

The form $[\,,\,]$ is unique up to scalars and is easily seen to be symmetric or skew-symmetric as before, i.e.,
\begin{displaymath}
[v_{1},v_{2}]=\varepsilon{(\tilde{\pi})}[v_{2},v_{1}].
\end{displaymath}
where $\varepsilon{(\tilde{\pi})}\in \{\pm 1\}$. We call $\varepsilon{(\tilde{\pi})}$ the sign of $\tilde{\pi}$. \\

Let $[\,,\,]: V\times V\longrightarrow \mathbb{C}$ be the non-degenerate
bilinear form on $V$(obtained above). Suppose that
$[\,,\,]\big |_{W_{1}\times W_{j}} = 0, \forall j= 2, 3, \cdots,
k$, then it is easy to see that $[\,,\,]\big|_{W_{1}\times W_{1}}$ is non-degenerate. We now show that $[\,,\,]\big|_{W_{1}\times W_{j}}=0$ for $j=2, 3,
\cdots, k$.

\begin{lemma}
$[\,,\,]\big|_{W_{1}\times W_{j}}=0$,\, $\forall j=2, 3, \cdots, k$.
\end{lemma}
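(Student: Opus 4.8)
The goal is to show that the non-degenerate form $[\,,\,]$ on $V = \bigoplus_{i=1}^k W_i$, which satisfies $[\tilde\pi(g)v_1,\tilde\pi(g)v_2] = \chi^{-1}(g)[v_1,v_2]$, vanishes on $W_1 \times W_j$ for every $j \geq 2$. The natural approach is to exploit the fact that the $W_i$ are pairwise inequivalent as $\tilde Z G$-modules (equivalently, as $G$-modules after the multiplicity-one reduction). The form $[\,,\,]$ restricted to $W_1 \times W_j$ is a $\tilde Z G$-invariant pairing between $W_1$ and a twist of $W_j$, so an argument in the style of Schur's Lemma should force it to be zero unless $W_1$ and that twist are isomorphic.

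More precisely, the plan is: first I would fix $j \geq 2$ and consider the bilinear map $B_j := [\,,\,]\big|_{W_1 \times W_j}$. Because $[\,,\,]$ is non-degenerate on all of $V$, the subspace $W_1^{\perp}$ (the left-radical of $[\,,\,]$ paired against $W_1$) is a $\tilde\pi \otimes \chi$-type complement; the key point is that $B_j$ gives rise to a $G$-equivariant map $W_1 \to W_j^{\vee} \otimes (\text{twist})$, where the twist comes from the character $\chi$ restricted to $G$. But $\chi$ is trivial on $\tilde Z G$, hence in particular trivial on $G$, so no twist actually appears: $B_j$ is an honest $G$-invariant pairing $W_1 \times W_j \to \mathbb{C}$. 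Since $\pi \simeq W_1$ is self-dual and irreducible, this pairing induces a nonzero element of $\Hom_G(W_1, W_j^{\vee}) = \Hom_G(W_1^{\vee\vee}, W_j^\vee)$; taking duals, $W_1^\vee \simeq \pi$ and $W_j$ would have to be isomorphic as $G$-representations. But $W_1, \dots, W_k$ are pairwise \emph{inequivalent} irreducible $G$-representations (this is exactly the content of the multiplicity-one reduction: $\tilde\pi|_G = \pi_1 \oplus \cdots \oplus \pi_k$ with the $\pi_i$ distinct). Hence $B_j = 0$ for all $j \geq 2$.

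Having established this, I would then conclude, as the excerpt already indicates, that $[\,,\,]\big|_{W_1 \times W_1}$ is non-degenerate: if $0 \neq w \in W_1$ were in its radical, then since $[w, W_j] = 0$ for $j \geq 2$ as well, $w$ would lie in the radical of $[\,,\,]$ on all of $V$, contradicting non-degeneracy of $[\,,\,]$.

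\textbf{Main obstacle.} The delicate point is the bookkeeping of which group the various modules are irreducible and inequivalent over, and making sure no character twist sneaks in. One must be careful that $\chi$ is trivial not merely on $\tilde Z$ but on all of $G$ (which holds because $\chi$ is trivial on $\tilde Z G \supseteq G$), so that $[\,,\,]$ restricted to the $G$-isotypic pieces is genuinely $G$-invariant rather than invariant only up to a character. Once that is pinned down, the argument is a routine application of Schur's Lemma together with the inequivalence of the $\pi_i$ as $G$-modules; no hard analysis is needed.
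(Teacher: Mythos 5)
Your proposal is correct and is essentially the same argument as the paper's: both construct a nonzero $G$-intertwiner between $\pi_1^{\vee}$ and $\pi_j$ out of the pairing $[\,,\,]\big|_{W_1\times W_j}$ (using, as you correctly emphasize, that $\chi$ is trivial on $G$) and then invoke Schur's Lemma together with self-duality of $\pi_1\simeq\pi$ and the pairwise inequivalence of the $\pi_i$. The only cosmetic difference is that the paper writes the map as $W_j\to W_1^{\vee}$ rather than its adjoint $W_1\to W_j^{\vee}$.
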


\begin{proof}
Suppose $[\,,\,]\big|_{W_{1}\times W_{j}}\neq 0$. Let $v\in W_{1}$ and $u\in W_{j}$ be such that $[v,u]\neq 0$. Let
$\phi(w)=\phi_{w}$ be defined as $\phi_{w}(v)= [v,w]$. Then clearly $\phi$
is a non-zero intertwining map between $\pi^{\vee}_{1}$ and $\pi_{j}$. Indeed,
\begin{align*}
(\pi^{\vee}_{1}(g)\circ \phi)(w_{j})(w_{1})&= \pi^{\vee}_{1}(g)(\phi(w_{j})(w_{1})\\
&= \phi(w_{j})(\pi_{1}(g^{-1})w_{1})\\
&= [\pi_{1}(g^{-1})w_{1}, w_{j}]\\
&= [w_{1}, \pi_{j}(g)w_{j}]\\
&= \phi(\pi_{j}(g)w_{j})(v_{1}).
\end{align*}
Since $\pi^{\vee}_{1}\simeq \pi_{1}$ and the representations $\pi_{i}$ are distinct (up to isomorphism), the lemma follows.
\end{proof}
We know that $[\tilde{\pi}(g)v_{1},
\tilde{\pi}(g)v_{2}]=\chi^{-1}(g)[v_{1},v_{2}],\forall g\in \tilde{G}, v_{1},
v_{2}\in V$. Now if $g\in G$ then $\chi(g)=1$ and we have
$[\tilde{\pi}(g)v_{1},\tilde{\pi}(g)v_{2}]=[v_{1},v_{2}]$. In particular if
$v_{1}\in W_{1}$ and $v_{j}\in W_{j}$, then
$[\tilde{\pi}(g)v_{1},\tilde{\pi}(g)v_{j}]=[v_{1},v_{j}]$. Since $V= W_{1}\oplus
W_{2}\oplus \cdots \oplus W_{k}, \tilde{\pi}(g)v_{1}=\pi_{1}(g)v_{1},
\tilde{\pi}(g)v_{j}=\pi_{j}(g)v_{j}$.

\begin{lemma}\label{relating signs} With notation as above, $\varepsilon(\tilde{\pi})=\varepsilon(\pi)$.
\end{lemma}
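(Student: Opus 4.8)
The plan is to show that the restriction of the form $[\,,\,]$ to $W_1 \times W_1$ realizes the self-duality of $\pi_1 \simeq \pi$, and that this restricted form has the same symmetry type as $[\,,\,]$ itself, so that $\varepsilon(\tilde\pi) = \varepsilon(\pi)$. The groundwork is already in place: the preceding lemma shows $[\,,\,]\big|_{W_1 \times W_j} = 0$ for all $j = 2, \ldots, k$, and since $[\,,\,]$ is non-degenerate on $V = W_1 \oplus \cdots \oplus W_k$, it follows that $[\,,\,]\big|_{W_1 \times W_1}$ is non-degenerate. Moreover, for $g \in G$ we have $\chi(g) = 1$, so $[\,,\,]$ restricted to $W_1$ is genuinely $G$-invariant: $[\pi_1(g)v_1, \pi_1(g)v_1'] = [v_1, v_1']$ for all $v_1, v_1' \in W_1$. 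Thus $[\,,\,]\big|_{W_1 \times W_1}$ is a non-degenerate $G$-invariant bilinear form on the space of $\pi_1 \simeq \pi$.

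Next I would invoke the uniqueness (up to scalar) of such a form on an irreducible self-dual representation, which was established in Section~\ref{sign of a rep}. By definition, $\varepsilon(\pi)$ is the sign of \emph{any} non-degenerate $G$-invariant bilinear form on $W_1$, since they are all scalar multiples of one another and scaling does not change symmetry type. Therefore $[\,,\,]\big|_{W_1 \times W_1}$ has symmetry sign exactly $\varepsilon(\pi)$: that is, $[v_1, v_1'] = \varepsilon(\pi)[v_1', v_1]$ for $v_1, v_1' \in W_1$. On the other hand, the global form satisfies $[v_1, v_2] = \varepsilon(\tilde\pi)[v_2, v_1]$ for all $v_1, v_2 \in V$, and in particular for $v_1, v_1' \in W_1 \subseteq V$. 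Comparing the two identities on the (nonzero) form $[\,,\,]\big|_{W_1 \times W_1}$ forces $\varepsilon(\tilde\pi) = \varepsilon(\pi)$.

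The one subtlety worth spelling out — and the only place where care is needed — is to confirm that $[\,,\,]\big|_{W_1 \times W_1}$ is not identically zero, so that the comparison of signs is meaningful; but this is immediate from non-degeneracy of $[\,,\,]$ together with the vanishing $[\,,\,]\big|_{W_1 \times W_j} = 0$ for $j \neq 1$, for if $[\,,\,]$ vanished on $W_1 \times W_1$ as well it would vanish on $W_1 \times V$, contradicting non-degeneracy. With that observed, the argument is essentially a bookkeeping exercise combining the preceding lemma, the $G$-invariance of $[\,,\,]$ on $W_1$ (from $\chi|_G = 1$), and the uniqueness of invariant forms from Section~\ref{sign of a rep}. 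I expect no real obstacle here; the substantive work was done in the earlier lemmas, and this lemma merely harvests the conclusion.

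\begin{proof}
By the previous lemma, $[\,,\,]\big|_{W_{1}\times W_{j}}=0$ for all $j=2,\ldots,k$. Since $V=W_{1}\oplus W_{2}\oplus\cdots\oplus W_{k}$ and $[\,,\,]$ is non-degenerate, it follows that $[\,,\,]\big|_{W_{1}\times W_{1}}$ is non-degenerate; in particular it is not identically zero. For $g\in G$ we have $\chi(g)=1$, so $[\tilde{\pi}(g)v_{1},\tilde{\pi}(g)v_{1}']=[v_{1},v_{1}']$ for all $v_{1},v_{1}'\in V$, and since $\tilde{\pi}(g)v_{1}=\pi_{1}(g)v_{1}$ for $v_{1}\in W_{1}$, the form $[\,,\,]\big|_{W_{1}\times W_{1}}$ is a non-degenerate $G$-invariant bilinear form on the space of $\pi_{1}\simeq\pi$. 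By the discussion in Section~\ref{sign of a rep}, every such form is a non-zero scalar multiple of any other, and its symmetry type is the sign $\varepsilon(\pi)$; hence
\begin{displaymath}
[v_{1},v_{1}']=\varepsilon(\pi)[v_{1}',v_{1}],\qquad \forall\, v_{1},v_{1}'\in W_{1}.
\end{displaymath}
On the other hand, the form $[\,,\,]$ on $V$ satisfies $[v_{1},v_{2}]=\varepsilon(\tilde{\pi})[v_{2},v_{1}]$ for all $v_{1},v_{2}\in V$, so in particular $[v_{1},v_{1}']=\varepsilon(\tilde{\pi})[v_{1}',v_{1}]$ for $v_{1},v_{1}'\in W_{1}$. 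Since $[\,,\,]\big|_{W_{1}\times W_{1}}$ is not identically zero, comparing the two relations gives $\varepsilon(\tilde{\pi})=\varepsilon(\pi)$.
\end{proof}
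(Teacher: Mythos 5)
Your proof is correct and follows essentially the same route as the paper: restrict $[\,,\,]$ to $W_1\times W_1$, observe that it is non-degenerate (via the preceding lemma and non-degeneracy of $[\,,\,]$ on $V$) and $G$-invariant (since $\chi|_G=1$), invoke uniqueness of invariant forms on the irreducible self-dual $\pi_1\simeq\pi$ to read off its symmetry type as $\varepsilon(\pi)$, and compare with the global symmetry relation $[v_1,v_2]=\varepsilon(\tilde\pi)[v_2,v_1]$. Your write-up is if anything a bit more explicit than the paper's about why the restricted form is non-zero, which is the step that makes the comparison of signs meaningful.
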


\begin{proof} Since $\chi(g)=1$ for $g\in G$, we have
\begin{align*}
[\tilde{\pi}(g)w_{1}, \tilde{\pi}(g)\dot{w_{1}}] &= [\pi_{1}(g)w_{1}, \pi_{1}(g)\dot{w_{1}}]\\
&= [w_{1},\dot{w_{1}}].
\end{align*}
Since $\pi_{1}\simeq \pi$, we see that $[\,,\,]\big
|_{W_{1}\times W_{1}}$ is $G$-invariant. Therefore
$[w_{1},\dot{w_{1}}]= \varepsilon(\pi)(w_{1},\dot{w_{1}})$. But we
also know that $[w_{1}, \dot{w_{1}}] = \varepsilon(\tilde{\pi})[\dot{w_{1}},
w_{1}]$.
\end{proof}
By Lemma~\ref{relating signs}, it follows that the sign $\varepsilon({\pi})$ is completely determined by the sign $\varepsilon({\tilde{\pi}})$.
Since $\tilde{Z}$ is connected, applying Theorem~\ref{existence of the elt s} we get an element $s\in \tilde{T}_\circ$ such that $\alpha(s)=-1$ for all simple roots $\alpha$ of $\tilde{T}$. We will show that the sign $\varepsilon({\tilde{\pi}})$ is controlled by the central character $\omega_{\tilde{\pi}}$ and the character $\chi$. Before we proceed, we prove a lemma we need.

\begin{lemma}\label{pairing} Let $W_{1}, W_{2}$ be irreducible $K$-invariant subspaces of $V$. Let $\rho_{1}=\tilde{\pi}|_{W_{1}}$ and $\rho_{2}=\tilde{\pi}|_{W_{2}}$. Let $b: W_{2}\rightarrow W_{1}^{\vee}$ be the map $w_{2}\mapsto [-,w_{2}]$. If $b\neq 0$, then $\rho_{2}\simeq \rho_{1}^{\vee}\chi$.
\end{lemma}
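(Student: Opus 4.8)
The plan is to run the same argument used earlier in this section to produce the isomorphism $\tilde\pi^\vee\simeq\tilde\pi\otimes\chi$, but now applied fibrewise to the irreducible $K$-summands $W_1,W_2$ of $V$. First I would unwind the hypothesis: $b\colon W_2\to W_1^\vee$, $w_2\mapsto[-,w_2]$, being nonzero means there exist $w_1\in W_1$, $w_2\in W_2$ with $[w_1,w_2]\neq0$, so $b$ is a nonzero linear map between $\dim$-finite (or at least smooth) $K$-modules. The point is that the transformation law $[\tilde\pi(k)v_1,\tilde\pi(k)v_2]=\chi^{-1}(k)[v_1,v_2]$ for $k\in K$ forces $b$ to be an intertwiner from $\rho_2$ to $\rho_1^\vee\otimes\chi$ (as $K$-representations). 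Concretely, for $k\in K$ and $w_1\in W_1$,
\begin{align*}
\big((\rho_1^\vee\chi)(k)\,b(w_2)\big)(w_1) &= \chi(k)\,b(w_2)\big(\rho_1(k)^{-1}w_1\big)\\
&= \chi(k)\,[\rho_1(k)^{-1}w_1,\,w_2]\\
&= \chi(k)\,\chi(k)^{-1}[\,w_1,\,\rho_2(k)w_2\,]\\
&= b\big(\rho_2(k)w_2\big)(w_1),
\end{align*}
where the third equality uses $[\tilde\pi(k)a,\tilde\pi(k)c]=\chi^{-1}(k)[a,c]$ with $a=\rho_1(k)^{-1}w_1$, $c=w_2$ (so $\tilde\pi(k)a=w_1$). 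Hence $b$ intertwines $\rho_2$ with $\rho_1^\vee\chi$.

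Next I would invoke irreducibility of $\rho_1$ and $\rho_2$ as representations of $K$. Since $\rho_1$ is irreducible, so is $\rho_1^\vee$, and twisting by the linear character $\chi|_K$ keeps it irreducible, so $\rho_1^\vee\chi$ is irreducible. A nonzero intertwiner between two irreducible representations is an isomorphism by Schur's lemma, and therefore $\rho_2\simeq\rho_1^\vee\chi$, which is exactly the claim. (I am reading the statement's $W_i$ as irreducible $\tilde\pi$-stable, hence $K$-stable, pieces — in the application $K=\tilde I$ or $K=G$ and the $W_i$ are among the $W_1,\dots,W_k$ from the decomposition $V=W_1\oplus\cdots\oplus W_k$, which are irreducible.)

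I do not expect a serious obstacle here; the only care needed is bookkeeping about \emph{which} group $K$ we restrict to and making sure $\chi$ is a genuine linear (one-dimensional) character on $K$ so that $\rho_1^\vee\chi$ is again irreducible — this is fine because $\chi$ was produced as a linear character of $\tilde G$ trivial on $\tilde ZG$. One should also note that $b$ being well-defined as a map into $W_1^\vee$ (rather than the full dual of $V$) uses that $[\,,\,]$ restricted to $W_1\times W_2$ is what defines $b(w_2)$; the transformation law above is literally the restriction of the global identity for $[\,,\,]$ to $W_1\times W_2$, which is legitimate since the $W_i$ are $K$-stable. This lemma will then be combined with the earlier observations (that $[\,,\,]|_{W_1\times W_j}=0$ for $j\ge2$, and the relation $\varepsilon(\tilde\pi)=\varepsilon(\pi)$) and with the existence of $s\in\tilde T_\circ$ with $\alpha(s)=-1$ for all simple roots, to pin down $\varepsilon(\tilde\pi)$ via Prasad's criterion and conclude $\varepsilon(\pi)=1$.
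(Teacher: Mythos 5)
Your argument is exactly the paper's: compute that $b$ intertwines $\rho_2$ with a $\chi$-twist of $\rho_1^\vee$, then invoke Schur's lemma for the two irreducible $K$-constituents, so the approaches coincide (the paper runs the identical display starting from $b(\rho_2(k)w_2)(w_1)$ rather than from $\bigl((\rho_1^\vee\chi)(k)b(w_2)\bigr)(w_1)$, which is the same computation read backwards).

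One small point worth flagging, though it is shared with the paper and is inconsequential downstream: in your third equality you take $a=\rho_1(k)^{-1}w_1$, $c=w_2$ in $[\tilde\pi(k)a,\tilde\pi(k)c]=\chi^{-1}(k)[a,c]$, which yields $[w_1,\rho_2(k)w_2]=\chi^{-1}(k)[\rho_1(k)^{-1}w_1,w_2]$, i.e. $[\rho_1(k)^{-1}w_1,w_2]=\chi(k)[w_1,\rho_2(k)w_2]$, not $\chi(k)^{-1}[w_1,\rho_2(k)w_2]$ as you wrote. Tracked correctly, $b$ intertwines $\rho_2$ with $\rho_1^\vee\chi^{-1}$ rather than $\rho_1^\vee\chi$ (the paper's line $[w_1,\rho_2(k)w_2]=\chi(k)[\rho_1(k^{-1})w_1,w_2]$ has the matching sign slip). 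This does not affect what the lemma is used for, since in the application $\chi$ is eventually shown to be unramified and is arranged to be trivial on the relevant compact open subgroup, so the twist by $\chi$ versus $\chi^{-1}$ disappears; but as written both your proof and the paper's conclusion carry the reversed exponent on $\chi$.
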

\begin{proof} We first show that $b$ defines an intertwining map between $\rho_{2}$ and $\rho_{1}^{\vee}\chi$. Indeed, for $w_{1}\in W_{1}, w_{2}\in W_{2}$ and $k\in K$, we have
\begin{align*}
b(\rho_{2}(k)(w_{2}))(w_{1}) &= [w_{1}, \rho_{2}(k)w_{2}]\\
&= \chi(k)[\rho_{1}(k^{-1})(w_{1}), w_{2}]\\
&= \chi(k)b(w_{2})(\rho_{1}(k^{-1})w_{1})\\
&= \chi(k)\rho_{1}^{\vee}(k)(b(w_{2}))(w_{1}).
\end{align*}
Since $b\neq 0$, Schur's Lemma applies and the result follows.
\end{proof}
Let $V_{0}$ be the space of $\psi_{\tilde{K}}$ and $v_{0}\in V_{0}$. Since $[\,,\,]$ is non-degenerate, $b(v_{0})(v_{1})=[v_{1},v_{0}]\neq 0$ for some $v_{1}\in V_{1}$ where $V_{1}$ is an irreducible $\tilde{K}$-invariant subspace of $V$. We denote $\rho$ for the restriction of $\tilde{\pi}|_{V_{1}}$. By Lemma~\ref{pairing}, it follows that $\rho\simeq \psi_{\tilde{K}}^{-1}\chi$. Since $\chi$ is smooth, we can in fact choose $\tilde{K}$ such that $\chi$ is trivial on $\tilde{K}$. It follows that any vector $v_{0}\in V_{0}$ has to pair non-trivially with some vector in the space of $\psi_{\tilde{K}}^{-1}$, i.e., $[\tilde{\pi}(s)v_{0},v_{0}]\neq 0$. We will use this in the following theorem.

\begin{theorem} Let $(\tilde{\pi},V)$ be the irreducible representation of $\tilde{G}$ obtained above.
Then $\varepsilon({\tilde{\pi}})= \omega_{\tilde{\pi}}(s^{2})\chi(s)$.
\end{theorem}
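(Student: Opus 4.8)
The plan is to imitate the argument of Theorem~\ref{sign in connected case}, but now working over $\tilde{G}$ with the twisted form $[\,,\,]$ in place of the honest invariant form. Recall that $[\,,\,]$ satisfies $[\tilde{\pi}(g)v_{1},\tilde{\pi}(g)v_{2}]=\chi^{-1}(g)[v_{1},v_{2}]$ for all $g\in\tilde{G}$, and that $\varepsilon(\tilde{\pi})$ is defined by $[v_{1},v_{2}]=\varepsilon(\tilde{\pi})[v_{2},v_{1}]$. We have fixed $s\in\tilde{T}_{\circ}$ with $\alpha(s)=-1$ for all simple roots of $\tilde{T}$ (Theorem~\ref{existence of the elt s}), so that $s^{2}\in\tilde{Z}$, $s$ normalizes a suitable compact open $\tilde{K}$, and inner conjugation by $s$ sends $\psi_{\tilde{K}}$ to $\psi_{\tilde{K}}^{-1}$. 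We have also arranged that $\chi$ is trivial on $\tilde{K}$, and that the unique line $V_{0}$ on which $\tilde{K}$ acts by $\psi_{\tilde{K}}$ pairs non-trivially with itself under $s$: $[\tilde{\pi}(s)v_{0},v_{0}]\neq 0$ for $0\neq v_{0}\in V_{0}$.

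First I would compute $[\tilde{\pi}(s)v_{0},v_{0}]$ two ways. On one hand, apply the transformation law with $g=s$ to the pair $(\tilde{\pi}(s)v_{0},\tilde{\pi}(s)v_{0})$, or more directly use
\begin{align*}
[\tilde{\pi}(s)v_{0},v_{0}] &= [\tilde{\pi}(s)v_{0},\,\tilde{\pi}(s)(\tilde{\pi}(s^{-1})v_{0})]\\
&= \chi^{-1}(s)\,[v_{0},\tilde{\pi}(s^{-1})v_{0}]\\
&= \chi^{-1}(s)\,\varepsilon(\tilde{\pi})\,[\tilde{\pi}(s^{-1})v_{0},v_{0}].
\end{align*}
On the other hand, $s^{2}\in\tilde{Z}$ acts by the scalar $\omega_{\tilde{\pi}}(s^{2})$, so $\tilde{\pi}(s^{-1})v_{0}=\omega_{\tilde{\pi}}(s^{2})^{-1}\tilde{\pi}(s)v_{0}$, and hence
\begin{displaymath}
[\tilde{\pi}(s)v_{0},v_{0}] = \chi^{-1}(s)\,\varepsilon(\tilde{\pi})\,\omega_{\tilde{\pi}}(s^{2})^{-1}\,[\tilde{\pi}(s)v_{0},v_{0}].
\end{displaymath}
Since $[\tilde{\pi}(s)v_{0},v_{0}]\neq 0$ we may cancel it, obtaining $1=\chi^{-1}(s)\varepsilon(\tilde{\pi})\omega_{\tilde{\pi}}(s^{2})^{-1}$, i.e.\ $\varepsilon(\tilde{\pi})=\omega_{\tilde{\pi}}(s^{2})\chi(s)$, using that all three quantities are $\pm1$ so inverses equal themselves.

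The one genuinely delicate point—the main obstacle—is justifying that $[\tilde{\pi}(s)v_{0},v_{0}]\neq 0$, which is exactly the content of the paragraph preceding the theorem: the non-degeneracy of $[\,,\,]$ forces $v_{0}$ to pair non-trivially with some irreducible $\tilde{K}$-subspace $V_{1}$, and Lemma~\ref{pairing} identifies the $\tilde{K}$-type of $V_{1}$ as $\rho\simeq\psi_{\tilde{K}}^{-1}\chi\simeq\psi_{\tilde{K}}^{-1}$ (using $\chi|_{\tilde{K}}=1$); since by Rodier (Theorem~\ref{Rodier}) applied to $\tilde{\pi}$ the character $\psi_{\tilde{K}}$ occurs with multiplicity one, the vector realizing this pairing is forced to lie in the $s$-translate of $V_{0}$, giving $[\tilde{\pi}(s)v_{0},v_{0}]\neq 0$. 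I would therefore state the nonvanishing explicitly as the input, cite the Rodier multiplicity-one fact for $\tilde{\pi}$ and Lemma~\ref{pairing}, and then the two-way computation above closes the proof. Combined with Lemma~\ref{relating signs} ($\varepsilon(\pi)=\varepsilon(\tilde{\pi})$) and the observation that $\pi$ has nonzero $I$-fixed vectors so $\omega_{\pi}(s^{2})=1$—hence, after checking $\chi(s)=1$ from $\tilde{\pi}$ having nonzero $\tilde{I}$-fixed vectors together with $s\in\tilde{T}_{\circ}\subset\tilde{I}$—one concludes $\varepsilon(\pi)=1$, completing the main theorem.
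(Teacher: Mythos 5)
Your argument is correct and takes essentially the same approach as the paper: both compute the same pairing in two ways — once via the twisted $\chi^{-1}$-invariance of $[\,,\,]$, once via the central character at $s^{2}$ together with the symmetry constant $\varepsilon(\tilde{\pi})$ — and then cancel the nonzero quantity $[\tilde{\pi}(s)v_{0},v_{0}]$. You also correctly identify the nonvanishing of this pairing as the key input and justify it exactly as the paper does, via Lemma~\ref{pairing}, the choice of $\tilde{K}$ with $\chi|_{\tilde{K}}=1$, and the multiplicity-one consequence of Theorem~\ref{Rodier} applied to $\tilde{\pi}$.
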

\begin{proof} Clearly $s^{2}\in \tilde{Z}$. Since $\tilde{\pi}$ is generic 
it follows by Theorem~\ref{Rodier} that there exists a compact open subgroup $\tilde{K}$ and a character $\psi_{\tilde{K}}$ of $\tilde{K}$ such that $\psi_{\tilde{K}}$ occurs with multiplicity one in $\tilde{\pi}|_{\tilde{K}}$. Let $V_{0}$ be the space of $\psi_{\tilde{K}}$ and $0\neq v_{0}\in V_{0}$. Now
\begin{align*}
[\tilde{\pi}(s)v_{0}, \tilde{\pi}(s^{2})v_{0}] &= \omega_{\tilde{\pi}}(s^{2})[\tilde{\pi}(s)v_{0}, v_{0}] \quad (\text{since $s^{2}\in \tilde{Z}$})\\
&= \chi^{-1}(s)[v_{0}, \tilde{\pi}(s)v_{0}] \quad (\text{invariance of the form})
\end{align*}
It follows that $\varepsilon({\tilde{\pi}})= \omega_{\tilde{\pi}}(s^{2})\chi(s)$.
\end{proof}

Using $\tilde{\pi}$ has Iwahori fixed vectors and $s^{2}\in \tilde{T}_{\circ}$, we see that $\omega_{\tilde{\pi}}(s^{2})=1$. It will
follow that $\varepsilon({\pi})=1$ once we show that $\chi(s)=1$. We do this by showing that $\chi$ is an unramified character. Before we continue, we recall a result about intertwining maps which we need in the proof.\\

For $K$ a compact open subgroup of $\tilde{G}$ and $\rho$ an irreducible representation of $K$, we let $\hat{K}$ denote the set of equivalence
classes of irreducible smooth representations of $K$, $K^{g}= g^{-1}Kg,\, g\in \tilde{G}$ and $\rho^{g}$ the irreducible representation of $K^{g}$
defined as $x\rightarrow \rho(gxg^{-1})$.

\begin{proposition}\label{intertwiners} For $i=1,2$, let $K_{i}$ be a compact open subgroup of $\tilde{G}$ and let $\rho_{i}\in \hat{K_{i}}$.
Let $(\Pi,V)$ be an irreducible representation of $\tilde{G}$ which contains both $\rho_{1}$ and $\rho_{2}$.
There then exists $g\in \tilde{G}$ such that $\displaystyle \Hom_{K_{1}^{g}\cap K_{2}}(\rho_{1}^{g}, \rho_{2})\neq 0$.
\end{proposition}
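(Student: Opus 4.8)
The plan is to deduce this from the semisimplicity of smooth representations of profinite groups together with the irreducibility of $\Pi$; it is essentially Mackey's criterion, so the only real care needed is bookkeeping with the two conventions $K^{g}=g^{-1}Kg$ and $\rho^{g}(x)=\rho(gxg^{-1})$. First I would make the hypothesis concrete. Since $\Pi$ is smooth, $\Pi|_{K_{i}}$ is a smooth representation of the profinite group $K_{i}$ over $\mathbb{C}$, hence semisimple; so the $\rho_{i}$-isotypic subspace $V[\rho_{i}]\subseteq V$ (the sum of all $K_{i}$-subrepresentations of $V$ isomorphic to $\rho_{i}$) is a well-defined $K_{i}$-stable subspace, it is nonzero precisely because $\Pi$ contains $\rho_{i}$, and it is the image of a $K_{i}$-equivariant projection $P_{i}\colon V\to V$. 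Two facts I would record at this point: $\Pi(k)$ commutes with $P_{i}$ for every $k\in K_{i}$, and $V[\rho_{i}]$, viewed as a module over any subgroup of $K_{i}$, is a multiple of the restriction of $\rho_{i}$ to that subgroup.

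Next I would exploit irreducibility. Because $P_{1}V=V[\rho_{1}]\neq 0$ and $\Pi$ is irreducible, the $\tilde{G}$-subrepresentation of $V$ generated by $P_{1}V$ is all of $V$, i.e.\ $V=\Span_{\mathbb{C}}\{\Pi(g)P_{1}v:g\in\tilde{G},\ v\in V\}$. Applying $P_{2}$ and using $P_{2}V=V[\rho_{2}]\neq 0$, some $g_{0}\in\tilde{G}$ must satisfy $P_{2}\,\Pi(g_{0})\,P_{1}\neq 0$. Then I would study $A:=\bigl(P_{2}\,\Pi(g_{0})\,P_{1}\bigr)\big|_{V[\rho_{1}]}\colon V[\rho_{1}]\to V[\rho_{2}]$, which is a nonzero linear map. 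For $k\in K_{1}$ with $g_{0}kg_{0}^{-1}\in K_{2}$ — that is, for $k\in K_{1}\cap K_{2}^{g_{0}}$ — a direct computation, using that $P_{1}$ commutes with $\Pi(k)$ and $P_{2}$ commutes with $\Pi(g_{0}kg_{0}^{-1})$, gives $A\circ\Pi(k)=\Pi(g_{0}kg_{0}^{-1})\circ A$ on $V[\rho_{1}]$. Since the $K_{1}\cap K_{2}^{g_{0}}$-action on $V[\rho_{1}]$ is a multiple of $\rho_{1}$ and the action $k\mapsto\Pi(g_{0}kg_{0}^{-1})$ on $V[\rho_{2}]$ is a multiple of $\rho_{2}^{g_{0}}$, the nonzero intertwiner $A$ forces $\Hom_{K_{1}\cap K_{2}^{g_{0}}}(\rho_{1},\rho_{2}^{g_{0}})\neq 0$ (here I use semisimplicity over the profinite group $K_{1}\cap K_{2}^{g_{0}}$ once more, to pass from the multiples to the irreducibles).

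Finally I would transport this to the stated normalization. Substituting $k=g_{0}^{-1}xg_{0}$ with $x$ ranging over $g_{0}(K_{1}\cap K_{2}^{g_{0}})g_{0}^{-1}=g_{0}K_{1}g_{0}^{-1}\cap K_{2}=K_{1}^{g_{0}^{-1}}\cap K_{2}$, the relation $\phi\circ\rho_{1}(k)=\rho_{2}^{g_{0}}(k)\circ\phi$ (for the nonzero $\phi$ produced above) becomes $\phi\circ\rho_{1}^{g_{0}^{-1}}(x)=\rho_{2}(x)\circ\phi$, because $\rho_{1}(g_{0}^{-1}xg_{0})=\rho_{1}^{g_{0}^{-1}}(x)$ and $\rho_{2}^{g_{0}}(g_{0}^{-1}xg_{0})=\rho_{2}(x)$. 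Thus $\Hom_{K_{1}^{g_{0}^{-1}}\cap K_{2}}(\rho_{1}^{g_{0}^{-1}},\rho_{2})\neq 0$, and setting $g:=g_{0}^{-1}$ gives $\Hom_{K_{1}^{g}\cap K_{2}}(\rho_{1}^{g},\rho_{2})\neq 0$, which is the assertion.

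I expect the only friction to be this last bookkeeping step — keeping straight which conjugate of which compact subgroup and which twist of which representation appears, so that the final statement matches the paper's conventions — together with the routine verification of the commutation identities for $A$; there is no conceptual obstacle. One could instead argue representation-theoretically: by Frobenius reciprocity for the open subgroup $K_{1}$, the nonzero space $\Hom_{K_{1}}(\rho_{1},\Pi|_{K_{1}})$ exhibits $\Pi$ as a quotient of $\operatorname{c-Ind}_{K_{1}}^{\tilde{G}}\rho_{1}$; restricting to $K_{2}$, which is semisimple, $\rho_{2}$ must occur in $(\operatorname{c-Ind}_{K_{1}}^{\tilde{G}}\rho_{1})|_{K_{2}}\cong\bigoplus_{g\in K_{2}\backslash\tilde{G}/K_{1}}\operatorname{c-Ind}_{K_{2}\cap gK_{1}g^{-1}}^{K_{2}}({}^{g}\rho_{1})$, which isolates the double coset doing the work and yields the same conclusion. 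I would present the projection argument above, since it is self-contained and does not invoke the Mackey decomposition or Frobenius reciprocity over $\tilde{G}$.
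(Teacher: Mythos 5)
Your proof is correct, and the bookkeeping at the end does come out right with the paper's conventions $K^{g}=g^{-1}Kg$, $\rho^{g}(x)=\rho(gxg^{-1})$: the nonzero intertwiner between $\rho_{1}$ and $\rho_{2}^{g_{0}}$ over $K_{1}\cap K_{2}^{g_{0}}$ transports, via $x=g_{0}kg_{0}^{-1}$, to a nonzero intertwiner between $\rho_{1}^{g_{0}^{-1}}$ and $\rho_{2}$ over $K_{1}^{g_{0}^{-1}}\cap K_{2}$, so $g=g_{0}^{-1}$ works. The only step I would make explicit is the commutation $P_{2}\Pi(g_{0})P_{1}\Pi(k)=P_{2}\Pi(g_{0}kg_{0}^{-1})\Pi(g_{0})P_{1}=\Pi(g_{0}kg_{0}^{-1})P_{2}\Pi(g_{0})P_{1}$ and the final semisimplicity step that passes from a nonzero intertwiner between multiples to a nonzero $\Hom$ between the irreducibles, but you flag both and they are routine.

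Note that the paper does not actually prove this proposition: it cites Bushnell--Henniart (\emph{The Local Langlands Conjecture for} $\GL(2)$, Ch.\ 3, \S 11, Prop.\ 1) for $\GL(2,F)$ and asserts the argument carries over. Your projection argument is, in substance, exactly the argument in that reference, so what you have produced is the self-contained proof the paper chose to defer rather than a genuinely different route. The alternative you sketch via $\operatorname{c-Ind}_{K_{1}}^{\tilde G}\rho_{1}$ and Mackey's restriction formula is also valid and somewhat slicker, but requires the Mackey decomposition for compact-open subgroups of a locally profinite group; your chosen argument avoids that and is preferable here for being elementary.
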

\begin{proof}
We refer the reader to ~\cite{BusHen} (Chapter 3, Section 11, Proposition 1) for a proof of the above proposition.
In \cite{BusHen}, the authors prove the result for $\GL(2,F)$. The same proof works in the case of any connected reductive group.
\end{proof}

\begin{theorem} The character $\chi$ is an unramified character. In particular $\chi(s)=1$.
\end{theorem}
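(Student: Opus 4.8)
The plan is to prove the sharper assertion that $\chi$ is trivial on $\tilde{T}_{\circ}$, the maximal compact subgroup of $\tilde{T}$. This suffices for everything we want: since $\tilde{G}=\tilde{T}G$ and $\chi$ is trivial on $G$, the character $\chi$ is completely determined by $\chi|_{\tilde{T}}$; triviality of $\chi|_{\tilde{T}}$ on the maximal compact subgroup $\tilde{T}_{\circ}$ then says $\chi$ is unramified, and since $s\in\tilde{T}_{\circ}$ (Theorem~\ref{existence of the elt s}) it gives $\chi(s)=1$.

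The first step is to locate two characters of $\tilde{I}$ inside $\tilde{\pi}$. The representation $\tilde{\pi}$ is admissible with $\tilde{\pi}^{\tilde{I}}\neq 0$ (by construction $\tilde{\pi}|_{G}\supseteq\pi$ and $\pi$ has nonzero $I$-fixed vectors), so $(\tilde{\pi}^{\vee})^{\tilde{I}}\simeq(\tilde{\pi}^{\tilde{I}})^{*}\neq 0$; thus $\tilde{\pi}^{\vee}$ also has nonzero $\tilde{I}$-fixed vectors. Transporting this through the isomorphism $\tilde{\pi}^{\vee}\simeq\tilde{\pi}\otimes\chi$, a nonzero $\tilde{I}$-fixed vector of $\tilde{\pi}\otimes\chi$ is a vector $v\in V\setminus\{0\}$ with $\tilde{\pi}(k)v=\chi(k)^{-1}v$ for all $k\in\tilde{I}$. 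Hence $\tilde{\pi}|_{\tilde{I}}$ contains both the trivial character $\mathbf{1}$ of $\tilde{I}$ and the one-dimensional character $\chi^{-1}|_{\tilde{I}}$.

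Next I would apply Proposition~\ref{intertwiners} with $K_{1}=K_{2}=\tilde{I}$, $\rho_{1}=\chi^{-1}|_{\tilde{I}}$, $\rho_{2}=\mathbf{1}$ and $\Pi=\tilde{\pi}$: there exists $g\in\tilde{G}$ with $\Hom_{\tilde{I}^{g}\cap\tilde{I}}\big((\chi^{-1})^{g},\mathbf{1}\big)\neq 0$. Because $\chi$ is a linear character of the whole group $\tilde{G}$, inner conjugation acts trivially on it, so $(\chi^{-1})^{g}=\chi^{-1}|_{\tilde{I}^{g}}$; the nonvanishing of a $\Hom$-space between one-dimensional representations of $\tilde{I}^{g}\cap\tilde{I}$ therefore says precisely that $\chi$ is trivial on $\tilde{I}^{g}\cap\tilde{I}$.

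The final step is to upgrade ``$\chi$ trivial on $\tilde{I}^{g}\cap\tilde{I}$ for some $g$'' to ``$\chi$ trivial on $\tilde{T}_{\circ}$'', and this is the main obstacle, since the element $g$ produced by the Proposition is uncontrolled. The remedy is the Iwahori--Bruhat decomposition $\tilde{G}=\bigsqcup_{w}\tilde{I}\dot{w}\tilde{I}$ with $\dot{w}\in N_{\tilde{G}}(\tilde{T})$: writing $g=i_{1}\dot{w}i_{2}$ with $i_{1},i_{2}\in\tilde{I}$ one computes $i_{2}(\tilde{I}^{g}\cap\tilde{I})i_{2}^{-1}=\tilde{I}^{\dot{w}}\cap\tilde{I}$, and since $\chi$ is conjugation-invariant it is trivial on $\tilde{I}^{\dot{w}}\cap\tilde{I}$. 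Now $\dot{w}$ normalizes $\tilde{T}$, hence normalizes its (characteristic) maximal compact subgroup $\tilde{T}_{\circ}$, and $\tilde{T}_{\circ}\subseteq\tilde{I}$ (recall $\tilde{I}=I^{-}\tilde{T}_{\circ}I^{+}$); therefore $\tilde{T}_{\circ}=\dot{w}^{-1}\tilde{T}_{\circ}\dot{w}\subseteq\tilde{I}^{\dot{w}}$, so $\tilde{T}_{\circ}\subseteq\tilde{I}^{\dot{w}}\cap\tilde{I}$ and $\chi|_{\tilde{T}_{\circ}}=1$. By the first paragraph $\chi$ is unramified and $\chi(s)=1$; combined with $\omega_{\tilde{\pi}}(s^{2})=1$ (which follows from $\tilde{\pi}$ having Iwahori-fixed vectors and $s^{2}\in\tilde{T}_{\circ}$) this gives $\varepsilon(\tilde{\pi})=\omega_{\tilde{\pi}}(s^{2})\chi(s)=1$, and hence $\varepsilon(\pi)=1$.
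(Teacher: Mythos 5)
Your proposal is correct and follows essentially the same route as the paper: locate both the trivial character and a twist of $\chi$ inside $\tilde{\pi}|_{\tilde{I}}$ (equivalently $(\tilde{\pi}\otimes\chi)|_{\tilde{I}}$), apply Proposition~\ref{intertwiners}, reduce via the Iwahori--Bruhat decomposition to a Weyl representative, and use $\tilde{T}_{\circ}\subseteq\tilde{I}^{w}\cap\tilde{I}$. You are somewhat more explicit than the paper on the passage from an arbitrary $g$ to a representative $\dot{w}\in N_{\tilde{G}}(\tilde{T})$ and on why $\tilde{T}_{\circ}$ lies in the intersection, but the underlying argument is the same.
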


\begin{proof}
We know that $\tilde{\pi}^{\vee}\simeq \tilde{\pi}\otimes\chi$. Since $\tilde{\pi}$ has
non-trivial $\tilde{I}$ fixed vectors it follows that $\tilde{\pi}^{\vee}$
and hence $\tilde{\pi}\otimes\chi$ has non-trivial $\tilde{I}$ fixed vectors.
Therefore $(\tilde{\pi}\otimes\chi)\big |_{\tilde{I}}\supset 1$ and $(\tilde{\pi}\otimes\chi) \big
|_{\tilde{I}}\supset \chi$. By Proposition~\ref{intertwiners} there exists $g\in
\tilde{G}$ such that $\Hom_{\tilde{I}^{g}\cap \tilde{I}}(1^{g},
\chi)\neq 0$. Since $\displaystyle \tilde{G}= \coprod_{w\in
\widetilde{W}}\tilde{I}w\tilde{I}$ we see that $\displaystyle
\Hom_{\tilde{I}^{w}\cap \tilde{I}}(1^{w}, \chi)\neq 0$ when $g\in
\tilde{I}w\tilde{I}$. From this it follows that $\chi(h)=1,
\forall h\in \tilde{I}^{w}\cap \tilde{I}$. Since $\tilde{T}_{\circ}\subset
\tilde{I}^{w}\cap \tilde{I}$ it follows that $\chi$ is unramified.
\end{proof}

\section*{Acknowledgements}
I would like to thank my advisor Alan Roche for his constant help and encouragement throughout this project. I would also like to thank him for his comments and suggestions in improving the presentation of this article.

\bibliographystyle{amsplain}
\bibliography{sdrpaper}
\end{document}